\newcommand{\B}{\mathcal{B}}
\newcommand{\T}{\mathcal{T}}
\newcommand{\M}{\mathcal{M}}
\newcommand{\V}{\mathcal{V}}
\newcommand{\X}{\mathcal{X}}
\newcommand{\Y}{\mathcal{Y}}
\newcommand{\cL}{\mathcal{L}}
\newcommand{\cS}{\mathcal{S}}
\newcommand{\cH}{\mathcal{H}}
\newcommand{\cP}{\mathcal{P}}
\newcommand{\cQ}{\mathcal{Q}}
\newcommand{\cN}{\mathcal{N}}
\newcommand{\cZ}{\mathcal{Z}}
\newcommand{\cR}{\mathcal{R}}
\newcommand{\cU}{\mathcal{U}}
\newcommand{\C}{\mathbb{C}}
\newcommand{\N}{\mathbb{N}}
\newcommand{\R}{\mathbb{R}}
\newcommand{\Z}{\mathbb{Z}}
\newcommand{\bA}{\mathbb{A}}
\newcommand{\bE}{\mathbb{E}}
\newcommand{\bM}{\mathbb{M}}
\newcommand{\bK}{\mathbb{K}}
\newcommand{\bG}{\mathbb{G}}
\newcommand{\bH}{\mathbb{H}}
\newcommand{\bJ}{\mathbb{J}}
\newcommand{\rS}{\mathrm{S}}
\newcommand{\sB}{\mathscr{B}}
\newcommand{\sC}{\mathscr{C}}
\newcommand{\sV}{\mathscr{V}}
\newcommand{\sM}{\mathscr{M}}
\newcommand{\sN}{\mathscr{N}}
\newcommand{\sU}{\mathscr{U}}
\newcommand{\sZ}{\mathscr{Z}}
\newcommand{\CD}{C_{\!D}}
\newcommand{\CS}{C_{\!S}}
\newcommand{\CM}{C_{\!M}}
\newcommand{\CK}{C_{\!K}}
\newcommand{\Soli}{\mathscr{S}}
\newcommand{\Stz}{\operatorname{Stz}}
\newcommand{\ST}{\mathfrak{st}}
\newcommand{\STN}{\operatorname{ST}}
\newcommand{\loc}{_{\operatorname{loc}}}
\newcommand{\ul}{_{\text{\sc ul}}}
\newcommand{\FS}{\operatorname{FS}}
\newcommand{\al}{\alpha}
\newcommand{\be}{\beta}
\newcommand{\ga}{\gamma}
\newcommand{\de}{\delta}
\newcommand{\e}{\varepsilon}
\newcommand{\fy}{\varphi}
\newcommand{\om}{\omega}
\newcommand{\la}{\lambda}
\newcommand{\lb}{b}
\newcommand{\te}{\theta}
\newcommand{\si}{\sigma}
\newcommand{\ta}{\tau}
\newcommand{\ka}{\kappa}
\newcommand{\x}{\xi}
\newcommand{\y}{\eta}
\newcommand{\z}{\zeta}
\newcommand{\De}{\Delta}
\newcommand{\Om}{\Omega}
\newcommand{\OM}{\varpi}
\newcommand{\Ga}{\Gamma}
\newcommand{\La}{\Lambda}
\newcommand{\p}{\partial}
\newcommand{\na}{\nabla}
\newcommand{\Cu}{\bigcup}
\newcommand{\re}{\mathop{\mathrm{Re}}}
\newcommand{\im}{\mathop{\mathrm{Im}}}
\newcommand{\weak}{\operatorname{w-}}
\newcommand{\weakto}{\rightharpoonup}
\newcommand{\supp}{\operatorname{supp}}
\newcommand{\sign}{\operatorname{sign}}
\newcommand{\dist}{\operatorname{dist}}
\newcommand{\lec}{\lesssim}
\newcommand{\gec}{\gtrsim}
\newcommand{\IN}[1]{\text{ in }#1}
\newcommand{\etc}{,\ldots,}
\newcommand{\I}{\infty}
\newcommand{\Ker}{\operatorname{Ker}}
\newcommand{\Image}{\operatorname{Ran}}
\newcommand{\Span}{\operatorname{span}}
\newcommand{\ti}{\widetilde}
\newcommand{\ck}{\check}
\newcommand{\ba}{\overline}
\newcommand{\U}{\underline}
\newcommand{\LR}[1]{{\langle #1 \rangle}}
\newcommand{\Lim}{\lim\limits}
\newcommand{\Liminf}{\liminf\limits}
\newcommand{\diff}[1]{{\triangleleft #1}}
\newcommand{\pa}{^\triangleright}
\newcommand{\on}{^{\scriptscriptstyle{\U{1}}}}
\newcommand{\zr}{^{\scriptscriptstyle{\U{0}}}}
\newcommand{\oj}{^{\scriptscriptstyle{\U{j}}}}
\newcommand{\pb}{^\diamond}
\newcommand{\cs}{{\mathfrak{cs}}}
\newcommand{\ce}{{\mathfrak{c}}}
\newcommand{\pc}{\pb_\cs}
\newcommand{\EQ}[1]{\begin{equation}\begin{split} #1 \end{split}\end{equation}}
\newcommand{\BR}[1]{\left[#1\right]}
\newcommand{\tand}{\ \text{ and }\ }
\newcommand{\Del}[1]{}
\newcommand{\CAS}[1]{\begin{cases} #1 \end{cases}}
\newcommand{\SAC}[1]{\left.\begin{aligned} #1 \end{aligned}\right\}}
\newcommand{\pt}{&}
\newcommand{\pr}{\\ &}
\newcommand{\pq}{\quad}
\newcommand{\pn}{}
\newcommand{\prq}{\\ &\quad}
\newcommand{\prQ}{\\ &\qquad}
\numberwithin{equation}{section}
\newtheorem{thm}{Theorem}[section]
\newtheorem{lem}[thm]{Lemma}
\theoremstyle{remark}
\newtheorem{rem}[thm]{Remark}
\newcommand{\Sg}{\mathfrak{S}}
\newcommand{\pp}{\mathfrak{p}}
\newcommand{\sE}{\mathscr{E}}
\newcommand{\ml}[1]{\lceil #1 \rfloor}
\newcommand{\st}{\operatorname{\mathfrak{st}}}
\newcommand{\deri}{\acute}
\newcommand{\EN}[1]{\begin{enumerate} #1 \end{enumerate}}
\newcommand{\ENI}[1]{{\begin{enumerate}\renewcommand{\theenumi}{\roman{enumi}} #1 \end{enumerate}}}
\newcommand{\ENA}[1]{{\begin{enumerate}\renewcommand{\theenumi}{\alph{enumi}} #1 \end{enumerate}}}
\begin{document}

\title[Above the first excited energy]{Global dynamics above the first excited energy for the nonlinear Schr\"odinger equation \\ with a potential}

\begin{abstract}
Consider the focusing nonlinear Schr\"odinger equation (NLS) with a potential with a single negative eigenvalue. 
It has solitons with negative small energy, which are asymptotically stable, and solitons with positive large energy, which are unstable. 
We classify the global dynamics into 9 sets of solutions in the phase space including both solitons, restricted by small mass, radial symmetry, and an energy bound slightly above the second lowest one of solitons. 
The classification includes a stable set of solutions which start near the first excited solitons, approach the ground states locally in space for large time with large radiation to the spatial infinity, and blow up in negative finite time. 
\end{abstract}

\author[K.~Nakanishi]{Kenji Nakanishi}

\address{Department of Pure and Applied Mathematics
Graduate School of Information Science and Technology
Osaka University, Suita, Osaka 565-0871, JAPAN}

\email{nakanishi@ist.osaka-u.ac.jp}

\subjclass[2010]{35Q55,37D10,37K40,37K45} \keywords{Nonlinear Schr\"odinger equation, Scattering theory, Stability of solitons, Blow-up, Invariant manifolds}

\maketitle

\tableofcontents

\section{Introduction}
We continue from \cite{NLSP1} the study of global dynamics for the nonlinear Schr\"odinger equation with a potential $V=V(|x|):\R^3\to\R$ which decays as $|x|\to\I$,
\EQ{ \label{NLSP}
 i\dot u + H u = |u|^2u, \pq H:=-\De+V,  \pq u(t,x):\R^{1+3}\to\C, }
in the case $H$ has a bound state $0<\phi_0\in L^2(\R^3)$ 
\EQ{ \label{def e0phi0}
 H\phi_0 = e_0\phi_0, \pq e_0<0, \pq \|\phi_0\|_2=1,}
and no other eigenfunction nor resonance (so $\phi_0$ is the ground state of $H$). Henceforth, $\|\cdot\|_p$ denotes the $L^p(\R^3)$ norm. 
See Section \ref{ss:asm V} for the precise assumptions on $V$. 
As a simple case, it suffices to assume $V=V(|x|)\in\cS(\R^3)$ besides the above spectral condition. 

In \cite{NLSP1}, the global behavior was investigated for all radial solutions $u$ with small mass and energy below the first excited state. 
In this paper, the analysis goes slightly {\it above the threshold energy}: 
\EQ{ \label{def ME}
 \pt \bM(u):=\int_{\R^3}\frac{|u|^2}{2}dx \ll 1,
 \pr \bE(u):=\int_{\R^3}\frac{|\na u|^2+V|u|^2}{2}-\frac{|u|^4}{4}dx
 < \sE_1(\bM(u))(1+\e^2),}
for some small $\e>0$, 
where $\sE_1(\mu)$ denotes {\it the second lowest energy of solitons} for the prescribed mass $\bM(u)=\mu$. 
The goal of this paper is to give a complete classification of global dynamics including both stable and unstable solitons, as well as scattering and blow-up, in a phase space restricted only by the conserved quantities and the symmetry. 
The main questions are which initial data $u(0)$ lead to each type of solutions, and how the solution $u$ can change its behavior from one type to another along its evolution. 
See \cite[Introduction]{NLSP1}, \cite{book} and references therein for more background and motivation of this setting. 

\subsection{Solitons}
In order to state the main result precisely, we first need to define the energy levels of the ground state and the excited states. 
Consider the elliptic equation for the solution of the form $u(t)=e^{-it\om}\fy(x)$ for any time frequency $\om\in\R$
\EQ{ \label{sNLSP}
 (H+\om)\fy = |\fy|^2\fy}
and let $\Soli$ be the set of all radial solutions 
\EQ{ \label{def Soli}
 \Soli:=\{\fy\in H^1_r(\R^3) \mid \exists\om>0,\text{ s.t. } \eqref{sNLSP}\},}
where $H^1_r(\R^3)$ denotes the subspace of radially symmetric functions of $H^1(\R^3)$ with the norm $\|\fy\|_{H^1}^2=\|\na\fy\|_2^2+\|\fy\|_2^2$. 
The restriction to $\om>0$ comes from the absence of embedded eigenvalue for the Schr\"odinger operator $-\De+V-|\fy|^2$, which follows from the ODE in the radial setting. 
The ground state energy level is defined for each prescribed mass $\mu>0$ by
\EQ{ \label{def E0}
 \sE_0(\mu):=\inf\{\bE(\fy)\mid \fy\in\Soli,\ \bM(\fy)=\mu\},}
and the $j$-th excited state energy level is defined inductively by 
\EQ{ \label{def Ej}
 \sE_j(\mu):=\inf\{\bE(\fy)\mid \fy\in\Soli,\ \bM(\fy)=\mu,\ \bE(\fy)>\sE_{j-1}(\mu) \}}
together with the corresponding set of solitons
\EQ{ \label{def Solij}
 \Soli_j:=\{\fy\in\Soli\mid \bE(\fy)=\sE_j(\bM(\fy))\}.}

The small mass constraint $\bM(u)\ll 1$ enables us to identify the ground states $\Soli_0$ as bifurcation of the linear ground state $\phi_0$ for $\om\to-e_0+0$, and the first excited states $\Soli_1$ as rescaled perturbation for $\om\to\I$ of the ground state $Q$ of the nonlinear Schr\"odinger equation without the potential: 
\EQ{ \label{NLS}
 i\dot u - \De u = |u|^2 u.}
More precisely, let $Q\in H^1(\R^3)$ be the unique positive radial solution of 
\EQ{ \label{eq Q}
 -\De Q + Q = Q^3.}
There are constants $0<\mu_*,z_*\ll 1\ll \om_*<\I$ and $C^1$ maps 
\EQ{ \label{def PhiPsi}
 \pt (\Phi,\Om) :Z_*:=\{z\in\C\mid|z|<z_*\}\to H^1_r(\R^3)\times(-e_0,\I)
 \pr \Psi:[\om_*,\I)\to H^1_r(\R^3),}
such that $(\fy,\om)=(\Phi[z],\Om[z]),(\Psi[\om],\om)$ are solutions of \eqref{sNLSP} satisfying 
\EQ{
 \pt \Phi[z]=z\phi_0+\ga, \pq \ga\perp\phi_0, \pq \|\ga\|_{H^1}\lec|z|^3,
 \pq \Om[z]=-e_0+O(|z|^2), 
 \pr \Psi[\om](|x|) = \om^{1/2}(Q+\ga)(\om^{1/2} x), \pq \|\ga\|_{H^1}\lec \om^{-1/4},}
with asymptotic formulas of mass and energy 
\EQ{
 \pt \bM(\Phi[z])=|z|^2/2+O(|z|^6), \pq \bE(\Phi[z])=e_0|z|^2/2+O(|z|^4),
 \pr \bM(\Psi[\om])= \om^{-1/2}\bM(Q)+O(\om^{-3/4}),
 \pq \bE(\Psi[\om])= \om^{1/2}\bE(Q)+O(\om^{1/4}),}
as well as their monotonicity $\frac{d}{da}\bM(\Phi[a])\sim a$, $\frac{d}{d\om}\bM(\Psi[\om])\sim-\om^{-3/2}$, and 
\EQ{ \label{Soli by PhiPsi}
 \pt \Soli_0|_{\bM<\mu_*}=\{\Phi[z]\mid z\in Z_*\}, 
 \pq \Soli_1|_{\bM<\mu_*}=\{e^{i\te}\Psi[\om] \mid \te\in\R,\ \om>\om_*\}.}
Moreover, as $\mu_*>\mu\to+0$, 
\EQ{ \label{formula E012}
 \pt\sE_0(\mu)=e_0\mu(1+O(\mu)), 
 \pr\sE_1(\mu)=\bM(Q)^2\mu^{-1}(1+O(\mu^{1/2})),
 \pr\sE_2(\mu)>4\bM(Q)^2\mu^{-1}.}
Note that the energy for \eqref{NLS}
\EQ{
 \bE^0(\fy):=\int_{\R^3}\frac{|\na \fy|^2}{2}-\frac{|\fy|^4}{4}dx}
is identical to $\bM(\fy)$ if $\fy$ is a solution of \eqref{eq Q}. 
A proof of the above statements is given in \cite[Lemma 2.1]{gnt} for the ground state part, and in Lemma \ref{lem:sum} for the excited state part. 

\subsection{Types of behavior} \label{ss:tob}
In this paper, we consider the following three types of behavior of the solution $u$, both in positive time and in negative time, which leads to a classification into 9 non-empty sets of solutions. 
\EN{
\item Scattering to the ground states $\Soli_0$.
\item Blow-up. 
\item Trapping by the first excited states $\Soli_1$.
}
All the solutions below the excited states $\bE(u)<\sE_1(\bM(u))$ are completely split into (1) and (2) with the same behavior in $t>0$ and in $t<0$, which is explicitly predictable by the initial data, using the virial functional:
\EQ{ \label{def K2}
 \bK_2(u):=\p_{\al=1}\bE(\al^{3/2}u(\al x))=\int_{\R^3}|\na u|^2-\frac{rV_r|u|^2}{2}-\frac{3|u|^4}{4}dx,}
where $r:=|x|$ is the radial variable. 
See \cite[Theorem 1.1]{NLSP1} for the precise statement. 
The difference between below and above the excited energy are the new type (3), and solutions with different types of behavior in $t>0$ and in $t<0$, namely {\it transition} among (1)--(3). 

The following are precise definitions for (1)--(3), under the small mass constraint. 
Let $u$ be a solution of \eqref{NLSP}. 
The local wellposedness in $H^1(\R^3)$ implies that the maximal existence interval $(T_-(u),T_+(u))\subset\R$ is uniquely defined such that 
\EQ{
 u\in C((T_-(u),T_+(u));H^1(\R^3))} 
solves \eqref{NLSP} for $T_-(u)<t<T_+(u)$. 

We say that {\it $u$ blows up in $t>0$}, if $T_+(u)<\I$. 
Otherwise, we say that {\it $u$ is global in $t>0$}. 
We say that {\it $u$ scatters to the ground states} (or {\it scatters to $\Phi$} in short) as $t\to\I$, if for some $C^1$ function $z:(T_-(u),\I)\to Z_*$ and $u_+\in H^1_r(\R^3)$ 
\EQ{ 
 \lim_{t\to\I}\|u(t)-\Phi[z(t)]-e^{-it\De}u_+\|_{H^1(\R^3)}=0.}

For each $\om>0$, we introduce the following equivalent norm of $H^1(\R^3)$
\EQ{ \label{def H1om}
 \|\fy\|_{H^1_\om}^2:=\int_{\R^3}\om^{-1/2}|\na\fy|^2+\om^{1/2}|\fy|^2 dx. }
It dominates the homogeneous Sobolev norm $\dot H^{1/2}$ uniformly, and is the appropriate rescaling for the first excited states, because  
\EQ{
 \|\Psi[\om]\|_{H^1_\om}=\|Q\|_{H^1}(1+O(\om^{-1/4})).}
The open neighborhood of the small-mass part of $\Soli_1$ within distance $\de>0$ in this metric is denoted by 
\EQ{ \label{def Nde}
 \cN_\de(\Psi):=\{\fy\in H^1_r(\R^3) \mid \exists\te\in\R,\ \exists\om>\om_*,\ \|e^{i\te}\Psi[\om]-\fy\|_{H^1_\om}<\de\}.}
We say that {\it $u$ is trapped by the first excited states} (or {\it trapped by $\Psi$} in short) as $t\to\I$, if $u(t)\in\cN_\de(\Psi)$ for large $t$ and some small fixed $0<\de\ll 1$. 

We can easily distinguish the above three types using the $L^4_x$ norm for small mass solutions as follows. 
If $u$ scatters to $\Phi$ as $t\to\I$, then  
\EQ{
 \|u(t)\|_4 = \|\Phi[z]\|_4 + o(1) \sim \|\Phi[z]\|_2 + o(1) \lec \sqrt{\bM(u)}+o(1) \ll 1,}
as $t\to\I$. 
If $u$ blows up in $t>0$, then as $t\to T_+(u)$, 
\EQ{
 \|u(t)\|_4^4/2 \pt= -2\bE(u) + \int_{\R^3} |\na u(t)|+V|u|^2dx
 \pr \ge -2\bE(u)+\|\na u(t)\|_2^2+O(\|u(t)\|_2^2+\|u(t)\|_4^2)\to \I,}
since $V\in (L^2+L^\I)(\R^3)$. 
If $u$ is trapped by $\Psi$ as $t\to\I$, then for large $t$,
\EQ{
 \|u(t)\|_4 = \|\Psi[\om]\|_4-O(\de\om^{1/8}) \sim \om^{1/8} \gg 1,}
where $\Psi[\om]$ is estimated by \eqref{def Psi Qom}-\eqref{Psi next term} and the remainder is bounded by Gagliardo-Nirenberg
\EQ{
 \|\fy\|_4 \lec \|\na\fy\|_2^{3/4}\|\fy\|_2^{1/4}
 \lec \om^{1/8}\|\fy\|_{H^1_\om}.}

\subsection{The main result}
We consider the following set of initial data where the mass is bounded and the energy is allowed to exceed the first excited state slightly:
\EQ{
 \cH^{\mu,\e}:=\{\fy\in H^1_r(\R^3) \mid \bM(\fy)<\mu,\ \bE(\fy)<\sE_1(\bM(\fy))(1+\e^2)\}.}
These initial data can be classified by behavior of the solution in $t>0$:
\EQ{ \label{def SBT}
 \pt \cS:=\{u(0)\in H^1_r(\R^3) \mid \text{$u$ scatters to $\Phi$ as $t\to\I$}\},
 \pr \B:=\{u(0)\in H^1_r(\R^3) \mid \text{$u$ blows up in $t>0$}\},
 \pr \T_\de:=\{u(0)\in H^1_r(\R^3) \mid \text{$u(t)\in\cN_\de(\Psi)$ for large $t$}\},}
where $u$ is the solution of \eqref{NLSP} for the initial data $u(0)$. 
The same classification for $t<0$ is given by their complex conjugate, thanks to the time inversion symmetry: if $u(t,x)$ is a solution of \eqref{NLS}, then so is $\bar u(-t,x)$. 

\begin{thm} \label{thm:main}
If $\mu$ and $\e$ are small enough, then we have the following. 
\EQ{
 \cH^{\mu,\e} \subset \cS \cup \B \cup \T_{\de},}
for some $\de\le C\e$, where $C>0$ is some constant independent of $\mu,\e>0$. 
Each of the 9 combinations of dynamics in positive and negative time 
\EQ{
  \cH^{\mu,\e}\cap \X \cap \ba{\Y}, \pq \X,\Y\in\{\cS,\B,\T_{\de}\},}
contain infinitely many orbits. 
$\cS$ is open. $\T_\de\cap\cH^{\mu,\e}$ is a $C^1$ manifold of codimension $1$ in $H^1_r(\R^3)$, connected and unbounded. 
$\T_\de\cap\ba{\T_\de}\cap\cH^{\mu,\e}$ is a $C^1$ manifold of codimension $2$, connected and contained in $\cN_{\de'}(\Psi)$ for some $\de'\le C\de$. 
There is a connected open neighborhood of $\T_\de\cap\cH^{\mu,\e}$ which is separated by $\T_\de$ into two connected open sets contained respectively in $\cS$ and in $\B$. $\ba{\T_\de}\cap\cH^{\mu,\e}$ is also separated by $\T_\de$ into two manifolds contained respectively in $\cS$ and in $\B$. 
\end{thm}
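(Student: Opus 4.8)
The plan is to build the classification around a center-stable / center-unstable manifold decomposition near the excited solitons $\Soli_1$, combined with the below-threshold dichotomy from \cite{NLSP1} that governs the solution once it leaves a neighborhood of $\Soli_1$. The first step is to understand the linearized flow around $e^{i\te}\Psi[\om]$. Writing the solution as $u = e^{i\te}(\Psi[\om]+v)$ with a suitable choice of modulation parameters $(\te,\om)$ (and possibly the $L^2$-scaling and translation that come with the $\om\to\I$ rescaling to $Q$), the linearized operator is a perturbation of the linearization of \eqref{NLS} at $Q$. That operator is well understood: it has the two zero-eigenvalue generalized kernel directions (phase and scaling), and — because $Q$ is the NLS ground state in $3$D, which is $L^2$-supercritical — exactly one pair of real eigenvalues $\pm\nu$ with $\nu>0$, giving one unstable and one stable direction, and the rest of the spectrum is neutral/dispersive. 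So the picture is: one unstable mode $\la_+$, one stable mode $\la_-$, and a center part on which the dynamics is controlled by the modulation equations plus dispersive (Strichartz) estimates. The potential $V$ and the finite-$\om$ corrections are treated perturbatively in the $H^1_\om$ metric, using that $\|\Psi[\om]-\om^{1/2}Q(\om^{1/2}\cdot)\|$ is small and that $V$ lives at scale $1\ll\om^{1/2}$.

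Second, I would run the standard one-unstable-mode invariant-manifold construction (as in the Nakanishi–Schlag program for NLS/Klein–Gordon and in \cite{book}) inside the tube $\cN_\de(\Psi)$: a graph transform / Lyapunov–Perron fixed point produces a $C^1$ center-stable manifold of codimension $1$ (the local version of $\T_\de$) and a center-unstable manifold, whose intersection is the codimension-$2$ center manifold (the local version of $\T_\de\cap\ba{\T_\de}$). The key ejection lemma states: any solution that stays in $\cN_\de(\Psi)$ for all forward time must lie on the center-stable manifold; any other solution exits $\cN_\de(\Psi)$ through one of the two "sides" determined by the sign of the unstable mode $\la_+$ at the exit time, and after exiting it is, at the ejection time, a below-threshold function in a definite sign region of the virial functional $\bK_2$ — one sign region forces scattering to $\Phi$, the other forces blow-up, by the $\cH^{\mu}$-below-threshold theorem \cite[Theorem 1.1]{NLSP1}. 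The hyperbolic-dynamics "one-pass" / no-return argument (monotonicity of a virial-type quantity outside the tube, using the energy constraint $\bE<\sE_1(1+\e^2)$ which is only $O(\e^2)$ above the soliton energy and the threshold gap $\sE_2>4\bM(Q)^2\mu^{-1}$ that rules out the second excited family) shows the solution cannot re-enter the tube, so the exit behavior is final. This yields $\cH^{\mu,\e}\subset\cS\cup\B\cup\T_\de$, openness of $\cS$ (it is a union of open tube-exit sets plus the open scattering set below threshold), and the local manifold structure of $\T_\de$ with the two-sided separation into $\cS$ and $\B$.

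Third, for the global statements I would promote the local center-stable manifold to the global $\T_\de\cap\cH^{\mu,\e}$ by flowing it backward: $\T_\de\cap\cH^{\mu,\e}$ is the preimage under the (local, hence $C^1$) flow of the local center-stable piece, so it is a $C^1$ codimension-$1$ manifold; connectedness and unboundedness follow because the center-stable manifold is a graph over the codimension-$1$ subspace complementary to $\la_+$, which is connected and unbounded, and backward flow is a homeomorphism. Similarly $\ba{\T_\de}\cap\cH^{\mu,\e}$ is the forward-flow preimage of the local center-unstable manifold, and $\T_\de\cap\ba{\T_\de}\cap\cH^{\mu,\e}$ is their intersection, a $C^1$ codimension-$2$ connected manifold; by the ejection lemma applied in both time directions, any solution in this intersection stays in $\cN_{\de'}(\Psi)$ for all time with $\de'\le C\de$. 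To get that each of the $9$ sets $\cH^{\mu,\e}\cap\X\cap\ba{\Y}$ is infinite, I would do a connectedness / intermediate-value argument: pick a continuous (e.g. one- or two-parameter) family of initial data in $\cH^{\mu,\e}$ joining a known scattering datum (small, below threshold), a known blow-up datum (large negative energy, or a rescaled $Q$-profile), and a point on the center manifold; the partition of the parameter space into the three relatively-closed-in-the-others regions by $\T_\de$ forces the transition manifolds to be hit on an infinite (in fact uncountable) set, and translating/scaling within $H^1_r$ produces infinitely many distinct orbits. The two-sided local separation near $\T_\de$ and near $\ba{\T_\de}$ gives the last sentence directly from the manifold construction.

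**The main obstacle** I expect is the ejection-and-no-return analysis at the interface: showing that once a solution leaves $\cN_\de(\Psi)$ it is trapped in one definite sign region of $\bK_2$ and never comes back, while the energy is allowed to be $O(\e^2)$ above the soliton level. This is where the hyperbolic structure (the single unstable mode), the modulation control of the center directions, the variational characterization of $\sE_1$, and the gap up to $\sE_2$ all have to be combined carefully — in particular one must rule out the solution slowly drifting back toward $\Soli_1$ along the center/dispersive part, which requires a quantitative virial/monotonicity estimate that survives the small energy excess. Once that interface lemma is in place, the manifold regularity and the topological (connectedness, infinitude) conclusions are comparatively routine consequences of the Lyapunov–Perron construction and continuity of the flow.
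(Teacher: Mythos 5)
Your overall architecture (one unstable mode at the rescaled excited soliton, ejection, a no-return statement, a codimension-one trapped set with two-sided separation, then topological arguments for the nine sets) matches the paper, but the two steps you flag as the technical core are exactly where your plan would fail as written. First, the post-ejection dichotomy: you claim that at the exit time the solution is ``a below-threshold function in a definite sign region of $\bK_2$'' so that \cite[Theorem 1.1]{NLSP1} applies. This is not true: $\bE$ and $\bM$ are conserved, so an ejected solution still has energy up to $\sE_1(\bM(u))(1+\e^2)$, i.e.\ above the threshold, and the below-$\Soli_1$ theorem of \cite{NLSP1} says nothing about it. What ejection gives is only a definite value of the sign functional $\Sg_\om$ (sign of the unstable mode, matching the sign of $\bK^\om_2$ away from the solitons), and one must then \emph{prove} that above-threshold solutions staying at distance $\gtrsim\de_*$ from $\cQ_\om$ blow up (when $\Sg=-1$, via the localized virial, Lemma \ref{lem:bupafter}) or scatter to $\Phi$ (when $\Sg=+1$, Lemma \ref{lem:scatafter}). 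The scattering half is a genuinely new Kenig--Merle minimal-element argument using the profile decomposition of \cite{NLSP1}, with a dichotomy between a bounded rescaling frequency $\OM<\I$ and the concentrating case $\OM=\I$, where the limit equation is the potential-free NLS and the results of \cite{HR} and \cite{NLS} are invoked; none of this is subsumed by the below-threshold theorem.

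Second, the one-pass lemma: your proposed mechanism (``monotonicity of a virial-type quantity outside the tube ... a quantitative virial/monotonicity estimate that survives the small energy excess'') is precisely the approach the paper explains does \emph{not} work on the scattering side. When $\Sg=+1$ the returning orbit disperses during the variational phase, and the localized virial loses its sign because of the potential term and the ground-state family $\Soli_0$ (the failure occurs at the analogue of \cite[(4.31)]{NLS}); the virial argument survives only in the blow-up region $\Sg=-1$ (Section \ref{ss:bup}). The paper's Lemma \ref{lem:one pass} instead handles $\Sg=+1$ by contradiction and compactness: a sequence of returning solutions with return distance $\de_n\to0$ converges, after rescaling, to a limit trapped by the soliton as $t\to-\I$; for $\OM<\I$ this limit is shown precompact (via the minimality machinery of \cite{NLSP1}) and excluded by a localized virial on the compact trajectory, while for $\OM=\I$ the limit is identified through the classification of \cite{DHR} as the special solution $w_+$ scattering forward, and a uniform Strichartz perturbation argument (this is where $V\in L^2$ is used) contradicts the divergence of the Strichartz norm along the return. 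Separately, and less importantly, your Lyapunov--Perron construction of the local center-stable graph is a legitimate alternative to the paper's bisection-plus-Lipschitz construction of $G_\om$ in Theorem \ref{thm:mfd}, so that part is a stylistic difference rather than a gap; but without replacements for the two steps above, the classification $\cH^{\mu,\e}\subset\cS\cup\B\cup\T_\de$ and the separation statements are not established by your argument.
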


\subsection{Notation} \label{ss:nota}
First recall some notation in \cite{NLSP1}. 
$L^p$, $B^s_{p,q}$, and $H^s_p$ denote respectively the Lebesgue, inhomogeneous Besov and Sobolev spaces on $\R^3$, and $H^s:=H^s_2$. 
$\dot H^s=\dot H^s_2$ denotes the homogeneous Sobolev space, and 
$\|\cdot\|_p$ denotes the $L^p$ norm on $\R^3$. 
$\cS'(\R^3)$ denotes the space of tempered distributions. 
The complex-valued and the real-valued $L^2$ inner products are denoted respectively by 
$(f|g):=\int_{\R^3}f(x)\ba{g(x)}dx$ and $\LR{f|g}:=\re(f|g)$. 
For any Banach function space $X$ on $\R^3$, its subspace of radial functions is denoted by $X_r$, the space equipped with the weak topology is denoted by $\weak{X}$, the weak limit is denoted by $\weak\Lim$, and $L^p_t X$ denotes the $L^p$ space for $t\in\R$ with values in $X$. Some standard Strichartz norms on $\R^{1+3}$ are denoted by 
\EQ{
 \Stz^s:=L^\I_tH^s\cap L^2_t B^s_{6,2}, \pq \ST:=L^4_tL^6.}
For any function space $Z$ on $\R^{1+3}$ and a set $I\subset\R$, the restriction of $Z$ onto $I\times\R^3$ is denoted by $Z(I)$. 
For any $w:\R^3\to\R$ and $\fy\in H^1$, the following define some functionals 
\EQ{ \label{def funct}
 \pt\ml{w}(\fy):=\frac12\LR{w\fy|\fy}, 
  \pq \bM:=\ml{1}, \pq \bG(\fy):=\frac14\|\fy\|_4^4,
  \pr \bH^0(\fy):=\frac12\|\na\fy\|_2^2, 
  \pq \bE^0:=\bH^0-\bG, \pq \bE:=\bE^0+\ml{V}. }
For any $p\in(0,\I]$, $t\in\R$ and $\fy\in\cS'(\R^3)$, the $L^p$ preserving dilation and its generator are denoted by 
\EQ{ \label{def Spt}
 \cS_p^t\fy(x):=e^{3t/p}\fy(e^t x), \pq \cS_p'F(\fy):=\lim_{t\to 0}\frac{F(\cS_p^t\fy)-F(\fy)}{t},}
for any $F$ acting on functions on $\R^3$. We have $\cS_p'\fy=(x\cdot\na+3/p)\fy$. Then we define 
\EQ{ \label{def K2'}
 \bK_2:=\cS_2'\bE=2\bH^0-3\bG-\ml{\cS_\I'V}.}

Next, some new notation and symbols are introduced. 
For any symbols $F,X,Y$, the difference is denoted by (this is a slight modification from \cite{NLSP1})
\EQ{
 \diff F(X\pa,Y\pa\etc):= F(X\on,Y\on\etc)-F(X\zr,Y\zr\etc),}
where the symbols $\pa,\on,\zr$ and $\diff{}$ are reserved for this purpose, and the underline is to avoid confusion with exponents. 
The subspace and the projection orthogonal (in the real sense) to $\fy\in\cS(\R^3)$ are denoted by 
\EQ{ \label{def perp}
 \fy^\perp:=\{\psi\in\cS'(\R^3) \mid \LR{\fy|\psi}=0\},
 \pq P_\fy^\perp:=1-\|\fy\|_2^{-2}\fy\langle\fy|.}
The projection to the continuous spectral subspace of $H$ is denoted by 
\EQ{ \label{def Pc}
 P_c:=P_{\phi_0}^\perp P_{i\phi_0}^\perp=1-\phi_0(\phi_0|.}
For two Banach spaces $X$ and $Y$, the Banach space of bounded linear operators from $X$ to $Y$ is denoted by $\B(X,Y)$. 
For any $\om>0$ and $\fy\in H^1(\R^3)$, denote 
\EQ{ \label{def funcom}
 \pt \bA_\om:=\bE+\om\bM,
 \pq \bK_{0,\om}(\fy):=\LR{\bA_\om'(\fy)|\fy}=2(\bA_\om-\bG)(\fy),
 \pr \bE^\om(\fy):=\om^{-1/2}\bE(\rS_\om^{-1} \fy)
 =(\bE^0+\ml{V^\om})(\fy), 
 \pr \bA^\om(\fy):=\om^{-1/2}\bA_\om(\rS_\om^{-1} \fy)=(\bE^\om+\bM)(\fy),
 \pr \bK^\om_2(\fy):=\om^{-1/2}\bK_2(\rS_\om^{-1}\fy)
 =(2\bH^0-3\bG-\ml{\cS_\I'V^\om})(\fy),
 \pr \bJ^\om:=\bA^\om-\frac12\bK^\om_2=\frac12\bG+\frac12\ml{\cS_{3/2}'V^\om}+\bM,}
where the rescaling operator $\rS_\om$ and the rescaled potential $V^\om$ are defined by
\EQ{ \label{def rSom}
 \rS_\om\fy(x):=\om^{-1/2}\fy(\om^{-1/2}x), \pq V^\om(x):=\om^{-1/2}\rS_\om V,}
and the version without the potential 
\EQ{ \label{def A}
 \bA:=\bE^0+\bM=\lim_{\om\to\I}\bA^\om.}

In this paper, most of the analysis will be done in the variables rescaled by $\rS_\om$, where the smallness of $\bM$ corresponds to the largeness of $\om$. 
This is to avoid getting large scaling factors in the estimates around the first excited states, and to make the formulations similar in the leading order to the case without the potential. 
Of course, we still need to take care of the long-time impact of the potential and the ground states, even if they are small in some sense.  

\subsection{Assumptions on the potential} \label{ss:asm V}
In addition to the assumptions on $V$ in \cite{NLSP1}, we assume that $V\in L^2(\R^3)$. Hence the precise list of assumptions on $V$ is 
\ENI{
\item $V:\R^3\to\R$ is radially symmetric.
\item $V\in L^2(\R^3)\cap |x|L^1(\R^3)$ and $x\na V,x^2\na^2V\in (L^2+L^\I_0)(\R^3)$. 
\item $-\De+V$ on $L^2_r(\R^3)$ has a unique and negative eigenvalue, denoted by $e_0$. 
\item The wave operator $\Lim_{t\to\I}e^{itH}e^{it\De}$ and its adjoint are bounded on $H^k_\pp(\R^3)$ for some $\pp>6$ and $k=0,1$,
}
where $L^\I_0(\R^3):=\{\fy\in L^\I(\R^3)\mid \Lim_{R\to\I}\|\fy\|_{L^\I(|x|>R)}=0\}$. 
For example, if $V_0$ is a radial positive Schwartz function on $\R^3$, then there exist $b>a>0$ such that $V=-cV_0$ satisfies the above assumptions for $a<c<b$. See \cite{NLSP1} for more comments. 
$V$ is fixed throughout the paper, so that some ``constants" can depend on $V$. 

\section{The first excited state and the linearized operator} \label{s:excited}
In this section, we analyze the first excited state for small mass and the spectrum of the linearized operator around it. 

\subsection{Zero-mass asymptotic of energy}
For any solution $\fy\in H^1_r$ of \eqref{sNLSP} with $\bM(\fy)\ll 1$, we may apply the small mass dichotomy \cite[Lemma 2.3]{NLSP1}, since $\bK_2(\fy)=\LR{\bA_\om'(\fy)|\cS_2'\fy}=0$. Then we have either $\|\fy\|_{H^1}\sim\|\fy\|_2\ll 1$ or 
\EQ{ \label{SMD}
 \bG(\fy) \gec \bH^0(\fy) \gec \bM(\fy)^{-1} \gg 1.}
If $\bM(\fy)$ is small enough, then the former case implies that $\fy=\Phi[z]$ for some $z\in Z_*$, see \cite[Propositions 2.4 and 2.5]{NLSP1}. 
Hence we may concentrate on the latter case \eqref{SMD} for excited states. 
Since $V,\cS_p'V\in L^2+L^\I$, we have \cite[Lemma 2.1]{NLSP1}
\EQ{
 |\ml{V}(\fy)|+|\ml{\cS_p'V}(\fy)| \le \e\sqrt{\bG(\fy)} + C_{p,\e}\bM(\fy), }
for any $\e,p>0$. 
Combining this, \eqref{SMD} and $\bK_2(\fy)=0=\bK_{0,\om}(\fy)$, 
we obtain approximate Pohozaev identities as $\mu:=\bM(\fy)\to 0$, 
\EQ{ \label{asy fy funct}
 \bE(\fy)=\om\bM(\fy)+o(\mu^{-1/2})
 =\frac12\bG(\fy)+o(\mu^{-1/2})
 =\frac13\bH^0(\fy)+o(\mu^{-1/2}) \gec \mu^{-1},}
and so $\om\gec\mu^{-2}$. 
Then rescaling the solution by 
$\fy_\om:=\rS_\om\fy=\om^{-1/2}\fy(\om^{-1/2}x)$ 
leads to the rescaled equation with time frequency $1$
\EQ{ \label{rsNLS}
 (-\De+V^\om + 1)\fy_\om = |\fy_\om|^2\fy_\om,
 \pq V^\om(x)=\om^{-1}V(\om^{-1/2}x),}
and rescaled functionals
\EQ{ \label{equiv fy norms}
 (\bH^0,\bM,\bG,\bA^\om)(\fy_\om)=\om^{-1/2}(\bH^0,\om\bM,\bG,\bA_\om)(\fy)\sim\om^{1/2}\mu \gec 1.}
The rescaled potential is small in the following sense. 
Decompose $V=W_2+W_\I$ such that $\|W_2\|_2+\|W_\I\|_\I=\|V\|_{L^2+L^\I}$ and rescale $W^\om_p:=\om^{-1/2}\rS_\om W_p$. Then 
\EQ{ \label{Vom small}
 |\ml{V^\om}(\fy)| \pt\le \frac 12[\|W^\om_2\|_2\|\fy\|_4^2+\|W^\om_\I\|_\I\|\fy\|_2^2]
 \pr= \|V\|_{L^2+L^\I}[\om^{-1/4}\sqrt{\bG(\fy)}+\om^{-1}\bM(\fy)]
 \pr\lec \om^{-1/4}\|\na\fy\|_2^{3/2}\|\fy\|_2^{1/2}+\om^{-1}\|\fy\|_2^2 \lec \om^{-1/4}\|\fy\|_{H^1}^2,}
where the third inequality is by Gagliardo-Nirenberg. 
We have the same estimates on $\cS_p'V^\om$ and $\cS_p'\cS_q'V^\om$. In particular, for large $\om$, 
\EQ{ \label{AK bd H1}
 \bA^\om(\fy)-\frac 13\bK^\om_2(\fy)=\frac 16\|\na\fy\|_2^2+\frac 12\|\fy\|_2^2+\frac13\ml{\cS_1'V^\om}(\fy)
 \sim \|\fy\|_{H^1}^2.}

The small mass dichotomy \cite[Lemma 2.3]{NLSP1} is rewritten for the rescaled function. 
\begin{lem} \label{lem:dich}
There exists a constant $\CD\in(1,\I)$ such that  for any $\om\in(0,\I)$ and $\fy\in H^1(\R^3)$ satisfying 
\EQ{ \label{dich0}
 \CD\bK^\om_2(\fy)<\bM(\fy)^{-1} \tand \CD\bM(\fy)<\om^{1/2},} 
we have one of the following \eqref{dich1}--\eqref{dich3}
\begin{align}
 \pt \bH^0(\fy)\le \CD\om^{-1}\bM(\fy), \pq \bG(\fy)\le \CD\om^{-3/2}\bM(\fy)^2. \label{dich1}
 \pr \om^{-1}\bM(\fy)\le \CD\bH^0(\fy) \le \CD^2\bK^\om_2(\fy) \le \CD^3\bH^0(\fy). \label{dich2}
 \pr \bM(\fy)^{-1} \le \CD\bH^0(\fy) \le \CD^2\bG(\fy). \label{dich3}
\end{align}
For any $p,q>0$, there is a constant $C_{p,q}>0$ such that in the case \eqref{dich3},
\EQ{ \label{Vom small dich}
 |\ml{V^\om}(\fy)|+|\ml{\cS_p'V^\om}(\fy)|+|\ml{\cS_p'\cS_q'V^\om}(\fy)| \le C_{p,q}(\om^{-1/2}\bM(\fy))^{1/2}\bH^0(\fy).}
If $\fy_n$ is in the case \eqref{dich3} for all $n$, weakly converging in $H^1_r$ as $n\to\I$ and $\Liminf_{n\to\I}\bK^\om_2(\fy_n)\le 0$, then the weak limit is also in the case \eqref{dich3}. 
\end{lem}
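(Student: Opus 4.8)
As the sentence preceding the lemma indicates (``rewritten for the rescaled function''), the plan is to transport the small mass dichotomy \cite[Lemma 2.3]{NLSP1} through the scaling $\rS_\om$. Given $\fy$ satisfying \eqref{dich0}, I would set $\psi:=\rS_\om^{-1}\fy$; by \eqref{def funcom}--\eqref{def rSom} one has $\bH^0(\psi)=\om^{1/2}\bH^0(\fy)$, $\bG(\psi)=\om^{1/2}\bG(\fy)$, $\bM(\psi)=\om^{-1/2}\bM(\fy)$ and $\bK_2(\psi)=\om^{1/2}\bK^\om_2(\fy)$, under which \eqref{dich0} becomes exactly the hypothesis $\CD\bK_2(\psi)<\bM(\psi)^{-1}$, $\CD\bM(\psi)<1$ of that dichotomy. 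Applying it to $\psi$ and rescaling the resulting three alternatives back to $\fy$ reproduces \eqref{dich1}--\eqref{dich3} with the same constant (I fix $\CD$ once and for all, large enough to also cover the clauses below).

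For \eqref{Vom small dich} I would argue directly rather than rescale. The estimate \eqref{Vom small} applies verbatim with $\cS_p'V$ and $\cS_p'\cS_q'V$ in place of $V$, since their densities $x\cdot\na V$ and $x^2\na^2V$ lie in $L^2+L^\I_0$ by assumption~(ii) and $V\in L^2$ by the added assumption; this gives a bound of the shape $\om^{-1/4}\bH^0(\fy)^{3/4}\bM(\fy)^{1/4}+\om^{-1}\bM(\fy)$ with a $(p,q)$-dependent constant. In case \eqref{dich3} one has $\bH^0(\fy)\gec\bM(\fy)^{-1}$, i.e.\ $\bH^0(\fy)\bM(\fy)\gec1$, which converts the first term to $\lec(\om^{-1/2}\bM(\fy))^{1/2}\bH^0(\fy)$; the second term is even smaller once $\om^{-1/2}\bM(\fy)\lec1$ from \eqref{dich0} is used. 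This is \eqref{Vom small dich}.

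For the stability of \eqref{dich3} under weak limits, take $\fy_n\weakto\fy$ in $H^1_r$ with every $\fy_n$ in case \eqref{dich3} (hence satisfying \eqref{dich0}) and $\Liminf_n\bK^\om_2(\fy_n)\le0$. The sequence is bounded in $H^1$; $\bH^0$ and $\bM$ are weakly lower semicontinuous, whereas $\bG$, $\ml{V^\om}$ and $\ml{\cS_\I'V^\om}$ are continuous along $\fy_n\weakto\fy$: for $\bG$ by the compact radial (Strauss) embedding $H^1_r(\R^3)\hookrightarrow L^4(\R^3)$, and for the potential terms by pairing the $L^2$ part of $V$ (resp.\ $\cS_\I'V$) against $|\fy_n|^2\to|\fy|^2$ in $L^2$ while absorbing the $L^\I_0$ part off a large ball uniformly in $n$. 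Hence $\bK^\om_2(\fy)\le\Liminf_n\bK^\om_2(\fy_n)\le0$. Moreover \eqref{dich3} with the $H^1$-bound gives $\bG(\fy_n)\gec\bM(\fy_n)^{-1}\gec1$, so $\bG(\fy)=\Lim_n\bG(\fy_n)>0$, in particular $\fy\ne0$ and $\bH^0(\fy)>0$. Passing $\bH^0(\fy_n)\le\CD^2\bG(\fy_n)$ to the limit yields $\bH^0(\fy)\le\CD^2\bG(\fy)$, and Gagliardo-Nirenberg $\bG(\fy)\lec\bH^0(\fy)^{3/2}\bM(\fy)^{1/2}$ then forces $\bH^0(\fy)\bM(\fy)\gec1$, i.e.\ $\bM(\fy)^{-1}\lec\bH^0(\fy)$; thus $\fy$ obeys inequalities of the form \eqref{dich3}. (Equivalently: $\fy$ satisfies \eqref{dich0}, so the first part applies, and since $\bK^\om_2(\fy)\le0$ while $\bG(\fy)$ is bounded below, neither \eqref{dich2} (where $\bK^\om_2(\fy)\ge\CD^{-1}\bH^0(\fy)>0$) nor \eqref{dich1} is possible.)

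I expect this last clause to be the only real obstacle. Radial $H^1$ functions are compact in $L^p$ only for $2<p<6$, so $\bM$ may partly escape to spatial infinity and the inequality $\bM(\fy_n)^{-1}\le\CD\bH^0(\fy_n)$ cannot be passed to the limit term by term; the needed lower bound $\bM(\fy)^{-1}\lec\bH^0(\fy)$ has to be recreated from the $L^4$-compactness --- which pins $\bG(\fy)$, and hence $\bH^0(\fy)$, away from $0$ --- together with Gagliardo-Nirenberg, while the hypothesis $\Liminf_n\bK^\om_2(\fy_n)\le0$ is precisely what keeps the limit out of the small alternative \eqref{dich1}. Everything else is the scaling dictionary above and the elementary bounds of Section~\ref{s:excited}.
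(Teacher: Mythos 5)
Most of your proposal coincides with the paper's own proof. The trichotomy is indeed obtained by rescaling \cite[Lemma 2.3]{NLSP1} exactly as in your dictionary ($\bH^0,\bG,\bK_2$ pick up $\om^{1/2}$, $\bM$ picks up $\om^{-1/2}$), the paper only adding the remark that the $\bG$-bound in \eqref{dich1} comes from Gagliardo--Nirenberg; and your derivation of \eqref{Vom small dich} is the same computation as the paper's (there written as $(\be^{1/4}+\be)\bH^0(\fy)$ with $\be:=\om^{-1}\bM(\fy)/\bH^0(\fy)\le\CD(\om^{-1/2}\bM(\fy))^2$), including the observation that assumption (ii) and $V\in L^2$ make \eqref{Vom small} applicable to $\cS_p'V^\om$ and $\cS_p'\cS_q'V^\om$.

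The weak-limit clause is where your write-up has a genuine soft spot. Your quantitative lower bound on $\bG(\fy)$ is $\bG(\fy_n)\gec\bM(\fy_n)^{-1}\gec 1$, where the last step uses only the $H^1$-bound of the sequence; but to exclude \eqref{dich1} for the limit one must beat the bound that \eqref{dich1} together with \eqref{dich0} gives, namely $\CD\bG(\fy)\le\CD^2\om^{-3/2}\bM(\fy)^2<\om^{-1/2}$. A bound $\bG(\fy)\gec 1$ with a constant tied to the (arbitrary) $H^1$-size of the sequence yields no contradiction when $\om$ is small, and the lemma is stated for all $\om\in(0,\I)$. The correct lower bound is $\bG(\fy)\ge\om^{-1/2}/\CD$, obtained by chaining the second hypothesis of \eqref{dich0}, $\bM(\fy_n)^{-1}>\CD\om^{-1/2}$, with \eqref{dich3} for $\fy_n$ and the $L^4$-continuity of $\bG$; this is precisely the paper's display \eqref{3limG}, and with it your parenthetical alternative (apply the trichotomy to the limit, kill \eqref{dich2} by $\bK^\om_2(\fy)\le 0$, kill \eqref{dich1} by the $\bG$-bound) becomes exactly the paper's proof. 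Your primary route through Gagliardo--Nirenberg does not repair this: it only yields $\bM(\fy)^{-1}\le\CD^4C_{GN}^2\,\bH^0(\fy)$, i.e.\ ``inequalities of the form \eqref{dich3}'' with a degraded constant, which is strictly weaker than the assertion that the limit is in the case \eqref{dich3}; the constant is not cosmetic, since downstream (Lemma \ref{lem:minid}(2), via $\CS\ge\CD$ in \eqref{def CS}, and the proofs of Lemmas \ref{lem:minid}(4) and \ref{lem:var}) the conclusion is used with the calibrated constant $\CD$. So: replace the $H^1$-bound by \eqref{dich0} in the lower bound for $\bM(\fy_n)^{-1}$ and drop the GN detour, and your argument matches the paper's.
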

\begin{proof}
The estimate on $\bG$ in the case \eqref{dich1} follows by Gagliardo-Nirenberg. 
The left side of \eqref{Vom small dich} is bounded using \eqref{Vom small} by
\EQ{
 (\be^{1/4}+\be)\bH^0(\fy) \lec \be^{1/4}\bH^0(\fy),}
where $\be:=\om^{-1}\bH^0(\fy)^{-1}\bM(\fy)\le \CD(\om^{-1/2}\bM(\fy))^2<1/\CD$. Thus we obtain \eqref{Vom small dich}. 

Suppose that $\fy_n\in H^1_r$ are all in the case \eqref{dich3}, $\fy_n\to\fy$ weakly in $H^1$ and $\Liminf_{n\to\I}\bK^\om_2(\fy_n)\le 0$, then $\fy_n\to\fy$ strongly in $L^4$. 
The assumption \eqref{dich0} is preserved by the weak limit, and $\bK^\om_2(\fy)\le\Liminf_{n\to\I}\bK^\om_2(\fy_n)\le 0$.  Since $\fy_n$ is in the case \eqref{dich3}, 
\EQ{ \label{3limG}
 \om^{-1/2}< \CD^{-1}\bM(\fy_n)^{-1} \le \CD\bG(\fy_n)\to\CD\bG(\fy).} 
In particular, $\fy\not=0$. Since $\bK^\om_2(\fy)\le 0$, \eqref{dich2} is impossible. Since \eqref{dich1} implies 
\EQ{
 \CD G(\fy)\le \om^{-3/2}\CD^2\bM(\fy)^2<\om^{-1/2},} 
contradicting \eqref{3limG}, hence we have \eqref{dich3}. 
The rest of the lemma follows simply by rescaling \cite[Lemma 2.3]{NLSP1}. 
\end{proof}

Now consider any sequence of $(\fy,\om)$ solving \eqref{sNLSP} such that $\mu:=\bM(\fy)\to 0$ and $\fy\not\in\Phi[Z_*]$. 
Then \eqref{equiv fy norms} implies that boundedness of the rescaled functions $\fy_\om:=\rS_\om\fy$ in $L^2(\R^3)$ is equivalent to boundedness in $L^4(\R^3)$, boundedness in $\dot H^1(\R^3)$,  
\EQ{
 \om \sim \mu^{-1/2},}
and weak convergence in $H^1(\R^3)$ of $\fy_\om$ along a subsequence. 
In that case, let $\fy_\I\in H^1_r(\R^3)$ be the weak limit along the subsequence. Then it solves the static nonlinear Schr\"odinger equation without the potential 
\EQ{ \label{sNLS}
 (-\De + 1)\fy_\I = |\fy_\I|^2\fy_\I,} 
and the weak convergence implies that the limit energy satisfies 
\EQ{ \label{limit ene fy}
 \bA(\fy_\I) \pt\le \liminf \bA(\fy_\om)
 \pn= \liminf \om^{-1/2}\bA_\om(\fy) = \liminf 2\sqrt{\bM\bE(\fy)},}
where the last two equalities follow from \eqref{equiv fy norms} and \eqref{asy fy funct}. 

By the classical results on radial solutions of \eqref{sNLS}, 
all the solutions $\psi$ are real-valued (modulo complex rotation $\psi\mapsto e^{i\te}\psi$), satisfying $\bE^0(\psi)=\bM(\psi)$. 
The least energy non-trivial solution is the unique positive solution, namely the ground state $Q$. 
The other radial solutions have at least one zero point in $r=|x|>0$, and those $\psi\not=0$ with $m$ zeros have at least $m+1$ times energy: 
\EQ{
 \bA(\psi)>(m+1)\bA(Q)=2(m+1)\bM(Q)>0.}

The asymptotic \eqref{limit ene fy} of energy implies that if $(\fy,\om)$ is a sequence of solutions to \eqref{sNLSP} such that $\bM(\fy)\to 0$, $\fy\not\in\Phi[Z_*]$, and 
$\bA^\om(\rS_\om\fy) \le 2\bA(Q)$, then we have 
$e^{i\te}\fy\to Q$ for some sequence of $\te\in\R/2\pi\Z$, 
strongly in $H^1_r(\R^3)$. 
The convergence has to be strong, since otherwise 
\EQ{
 0=2(\bA-\bG)(Q) \le \liminf \om^{-1/2}\bK_{0,\om}(\fy)=0}
would become a strict inequality. 
Thus we have obtained 
\begin{lem} \label{lem:excited energy}
For any $\de>0$, there exists $\mu(\de)>0$ such that 
if $(\fy,\om)\in H^1_r\times(0,\I)$ satisfies the soliton equation \eqref{sNLSP} and 
\EQ{
 \bM(\fy)\le \mu(\de), \pq \bA^\om(\rS_\om\fy) \le 2\bA(Q),}
then either $\fy\in\Phi[Z_*]$ or 
\EQ{ \label{soliton near Q}
 \|\rS_\om\fy-e^{i\te}Q\|_{H^1} < \de}
for some $\om\sim\bM(\fy)^{-2}$ and $\te\in\R$. 
\end{lem}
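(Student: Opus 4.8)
The plan is to argue by contradiction through a compactness argument, making precise the discussion that precedes the statement. Suppose the conclusion fails. Then there exist $\de>0$ and a sequence $(\fy,\om)=(\fy_n,\om_n)$ of solutions of \eqref{sNLSP} with $\mu_n:=\bM(\fy_n)\to 0$, with $\bA^{\om_n}(\rS_{\om_n}\fy_n)\le 2\bA(Q)$, with $\fy_n\notin\Phi[Z_*]$, and with $\inf_{\te\in\R}\|\rS_{\om_n}\fy_n-e^{i\te}Q\|_{H^1}\ge\de$ for every $n$. Since $\bK_2(\fy_n)=0$ and $\mu_n\to 0$, the small mass dichotomy \cite[Lemma 2.3]{NLSP1} applies; because $\fy_n\notin\Phi[Z_*]$, \cite[Propositions 2.4 and 2.5]{NLSP1} rule out the first alternative for $n$ large, so we are in the regime \eqref{SMD}, and \eqref{asy fy funct} gives $\bE(\fy_n)\sim\mu_n^{-1}$ and $\om_n\gec\mu_n^{-2}$. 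Writing $\psi_n:=\rS_{\om_n}\fy_n$, the rescaled identities \eqref{equiv fy norms} give $\bH^0(\psi_n)\sim\bG(\psi_n)\sim\bM(\psi_n)\sim\bA^{\om_n}(\psi_n)\sim\om_n^{1/2}\mu_n$, and the hypothesis $\bA^{\om_n}(\psi_n)\le 2\bA(Q)$ forces $\om_n^{1/2}\mu_n\lec 1$, hence $\om_n\sim\mu_n^{-2}$ (in particular $\om_n\to\I$) and $\|\psi_n\|_{H^1}\sim 1$.

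Passing to a subsequence, $\psi_n\weakto\psi_\I$ weakly in $H^1_r(\R^3)$, hence also weakly in $L^2$, and, by the compact radial embedding $H^1_r\hookrightarrow L^4$, strongly in $L^4(\R^3)$. Each $\psi_n$ solves the rescaled equation \eqref{rsNLS}, whose potential term is controlled by \eqref{Vom small}, namely $|\ml{V^{\om_n}}(\psi_n)|\lec\om_n^{-1/4}\|\psi_n\|_{H^1}^2\to 0$; passing to the limit in \eqref{rsNLS} shows $\psi_\I$ solves \eqref{sNLS}. Since $\bG(\psi_n)\sim\om_n^{1/2}\mu_n\gec 1$ and $\bG(\psi_n)\to\bG(\psi_\I)$, we get $\psi_\I\ne 0$. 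Weak lower semicontinuity of $\bH^0$ and of $\bM$, the convergence of $\bG$, and $\ml{V^{\om_n}}(\psi_n)\to 0$ together give $\bA(\psi_\I)\le\liminf\bA(\psi_n)=\liminf\bA^{\om_n}(\psi_n)\le 2\bA(Q)$. By the cited classical facts on radial solutions of \eqref{sNLS}, a nontrivial solution possessing at least one zero has energy $>2\bA(Q)$; hence $\psi_\I$ has no zero, so modulo a complex rotation it is the positive ground state, $\psi_\I=e^{i\te_0}Q$ for some $\te_0\in\R$.

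It remains to upgrade the convergence $\psi_n\weakto e^{i\te_0}Q$ to the $H^1$ norm, which I expect to be the only genuinely delicate point (the sharp use of the threshold $2\bA(Q)$ in identifying the limit being the other). Pairing \eqref{sNLSP} with $\fy_n$ and rescaling gives $\bK_{0,\om_n}(\fy_n)=0$, i.e. $\bA^{\om_n}(\psi_n)=\bG(\psi_n)$, while the same Pohozaev-type identity for \eqref{sNLS} gives $\bA(Q)=\bG(Q)$. Hence $\bA(\psi_n)=\bA^{\om_n}(\psi_n)-\ml{V^{\om_n}}(\psi_n)=\bG(\psi_n)+o(1)\to\bG(Q)=\bA(Q)$, and subtracting the relation $\bG(\psi_n)\to\bG(Q)$ we obtain $\bH^0(\psi_n)+\bM(\psi_n)\to\bH^0(Q)+\bM(Q)$. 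Combined with $\liminf\bH^0(\psi_n)\ge\bH^0(Q)$ and $\liminf\bM(\psi_n)\ge\bM(Q)$, this forces $\|\na\psi_n\|_2\to\|\na Q\|_2$ and $\|\psi_n\|_2\to\|Q\|_2$, hence $\|\psi_n\|_{H^1}\to\|e^{i\te_0}Q\|_{H^1}$; since $\psi_n\weakto e^{i\te_0}Q$ in the Hilbert space $H^1$ with convergence of norms, $\psi_n\to e^{i\te_0}Q$ strongly in $H^1$. This contradicts $\inf_{\te}\|\psi_n-e^{i\te}Q\|_{H^1}\ge\de$ and proves the lemma.
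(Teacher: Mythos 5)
Your proposal is correct and follows essentially the same route as the paper: rescale by $\rS_\om$, extract a weak $H^1_r$ limit (strong in $L^4$ by radial compactness), identify it as $e^{i\te}Q$ using the $2\bA(Q)$ bound and the nodal classification of radial solutions of \eqref{sNLS}, and upgrade to strong convergence via the identity $\bK_{0,\om}(\fy)=0$ (your step $\bA^{\om_n}(\psi_n)=\bG(\psi_n)$ is exactly the paper's "otherwise $0=2(\bA-\bG)(Q)\le\liminf\om^{-1/2}\bK_{0,\om}(\fy)=0$ would be strict", just written out with weak lower semicontinuity of $\bH^0$ and $\bM$). The only cosmetic slip is attributing $\bE(\fy_n)\sim\mu_n^{-1}$ to \eqref{asy fy funct} alone (which gives only $\gec$; the matching upper bound uses the hypothesis $\bA^{\om}(\rS_\om\fy)\le 2\bA(Q)$), which you in fact use correctly right afterwards.
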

The first excited states satisfy the above energy constraint \cite[Proposition 2.5]{NLSP1}. 
In the next subsection, we prove that they are indeed the only solitons satisfying \eqref{soliton near Q} for small mass. Then the above lemma implies the estimate on $\sE_2$ in \eqref{formula E012}. 

\subsection{Construction of the first excited state}
The above lemma allows us to expand the first excited state in the rescaled variables around $Q$. 
For that purpose, consider the linearized operators for \eqref{sNLS} around $Q$ 
\EQ{ \label{def L}
 \cL v:=L_+v_1+iL_-v_2, \pq 
 L_+ := -\De + 1 - 3Q^2, \pq L_-:=-\De+1-Q^2,}
where $v_1:=\re v$ and $v_2:=\im v$ for any $v\in\cS'(\R^3)$. 
The null space of $\cL$ on $L^2_r$ equals to $\Span\{iQ\}$. 
$\cL$ is invertible on the radial subspace orthogonal to $iQ$, and $\cL^{-1}$ is bounded $H^{-1}_r \cap (iQ)^\perp \to H^1_r \cap (iQ)^\perp$. 
In other words, $(L_+)^{-1}:H^{-1}_r\to H^1_r$ and $(L_-)^{-1}:H^{-1}_r\cap Q^\perp \to H^1_r\cap Q^\perp$ are bounded. 

Let $\om>0$ and let $\psi\in H^1_r(\R^3)$ be a solution of \eqref{rsNLS} close to the ground states of \eqref{sNLS}, in other words 
\EQ{
 \de:=\inf_{\te\in\R}\|\psi-e^{i\te}Q\|_{H^1}}
is small enough. Then there is a unique $\te\in\R/2\pi\Z$ such that 
\EQ{
 \psi=e^{i\te}(Q+v) \implies 0=\LR{iQ|v}=\LR{iQ|e^{-i\te}\psi}, \pq \|v\|_{H^1}\sim \de.}
Indeed, it is explicitly given by $\te=\arg(\psi|Q)$. 
Then \eqref{rsNLS} is rewritten into the following equation for $v$: 
\EQ{ 
 \cL v = N(v) - V^\om (Q+v), \pq N(v):=2Q|v|^2+Qv^2+|v|^2v.}
and, since $v\perp iQ$, 
\EQ{ \label{seq v}
 v = (\cL|_{(iQ)^\perp})^{-1}P_{iQ}^\perp(N(v) - V^\om (Q+v)),}
where the orthogonal projection $P_{iQ}^\perp:\cS'\to(iQ)^\perp$ is bounded on $H^s$ for any $s\in\R$. Using Sobolev, H\"older and \eqref{Vom small}, we have 
\EQ{
 \|N(v)-V^\om(Q+v)\|_{H^{-1}} \pt\lec \|v\|_{H^1}^2+\|v\|_{H^1}^3+\om^{-1/4}(\|Q\|_{H^1}+\|v\|_{H^1})
 \pr\lec \om^{-1/4}(1+\|v\|_{H^1})+\|v\|_{H^1}^2+\|v\|_{H^1}^3,}
and similarly for any small $v\zr,v\on\in H^1$, 
\EQ{
 \|\diff[N(v\pa)-V^\om(Q+v\pa)]\|_{H^{-1}}
 \lec (\om^{-1/4}+\|v\zr\|_{H^1}+\|v\on\|_{H^1})\|\diff v\pa\|_{H^1}.}
Hence the right hand side of \eqref{seq v} is a contraction map for small $v\in H^1_r$ if $\om\gg 1$, having a unique fixed point $v\in H^1_r\cap(iQ)^\perp$ which is small in $H^1$. 
Thus 
\EQ{ \label{def Psi Qom}
 \Psi[\om]:=\rS_\om^{-1}Q_\om, \pq Q_\om:=Q+v,}
is a family of solutions to \eqref{sNLSP}, smoothly depending on $\om\gg 1$ with  
\EQ{ \label{Psi next term}
 v=-(L_+)^{-1}V^\om Q + O(\om^{-1/2}) = O(\om^{-1/4}) \IN{H^1_r(\R^3)}.}
Denote the orbit of the rescaled soliton by 
\EQ{ \label{def Qom orbit}
 \cQ_\om:=\{e^{i\te}Q_\om\mid \te\in\R\}.}

For the energy and mass, we deduce from \eqref{def Psi Qom}--\eqref{Psi next term}
\EQ{
 \sE_1(\bM(\Psi[\om]))=\pt\bE(\Psi[\om])=\om^{1/2}(\bE^0(Q)+O(\om^{-1/4})),
 \pr \bM(\Psi[\om])=\om^{-1/2}(\bM(Q)+O(\om^{-1/4})).}
Hence putting $\mu=\bM(\Psi[\om])$, we obtain 
\EQ{
 \om=\mu^{-2}(\bM(Q)^2+O(\mu^{1/2})),
 \pq \sE_1(\mu)=\mu^{-1}(\bM\bE^0(Q)+O(\mu^{1/2})).}
Since the ground states $\Phi[Z_*]$ have smaller $\om\sim-e_0$, these solitons have the least energy for fixed $\om\gg 1$. 
Hence they are also the constrained minimizers 
\EQ{
 \bA_\om(\Psi[\om])=\inf\{\bA_\om(\fy)\mid 0\not=\fy\in H^1_r(\R^3),\ \bK_{0,\om}(\fy)=0\}.}
In particular, $\Psi[\om]>0$ on $\R^3$. 
For the analysis of dynamics, it is more important to relate it to the virial identity or $\bK_2$. 
\begin{lem} \label{lem:minid}
There is a constant $\CM\in(1,\I)$ such that if $\om\in(1,\I]$ is large enough then we have the following for any $\fy\in H^1_r(\R^3)$. 
\EN{
\item $\bM(\fy)<2\bA(Q)$ and \eqref{dich1} $\implies(\bM+\om\bH^0)(\fy)<\CM$. 
\item $\eqref{dich3}\implies\CS\bM\bH^0(\fy)\ge 1\implies (\bM+\om\bH^0)(\fy)>\CM$, where 
\EQ{ \label{def CS}
 \CS:=\max(\CD,2 +(\bM\bH^0(Q))^{-1}).} 
\item If $(\bM+\om\bH^0)(\fy)\le\CM$ then the solution $u$ of \eqref{NLSP} with $u(0)=\rS_\om^{-1}\fy$ scatters to $\Phi$ as $t\to\pm\I$, satisfying $\bH^0(\rS_\om u(t))<\bH^0(Q)/4$ for all $t\in\R$. 
\item $\cQ_\om=\{e^{i\te}Q_\om\}_\te$ is the set of minimizers of 
\EQ{ \label{Iom}
 \pt\inf\{\bA^\om(\fy)\mid 0\not=\fy\in H^1_r,\ \bK^\om_2(\fy)=0,\ (\bM+\om\bH^0)(\fy)>\CM\}
 \pr=\inf\{\bJ^\om(\fy) \mid 0\not=\fy\in H^1_r,\ \bK^\om_2(\fy)\le 0,\ (\bM+\om\bH^0)(\fy)>\CM\}. }
}
\end{lem}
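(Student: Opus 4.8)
The plan is to treat the four items essentially in order, since (4) will rest on (1)--(3). For (1), I would start from the hypothesis \eqref{dich1}, which gives $\bH^0(\fy)\le\CD\om^{-1}\bM(\fy)$ and $\bG(\fy)\le\CD\om^{-3/2}\bM(\fy)^2$. Combined with $\bM(\fy)<2\bA(Q)$ and $\om\gg1$, this forces $\om\bH^0(\fy)\le\CD\bM(\fy)<2\CD\bA(Q)$, so $(\bM+\om\bH^0)(\fy)<(1+2\CD)\bA(Q)=:\CM$. For (2), the hypothesis \eqref{dich3} gives $\bM(\fy)^{-1}\le\CD\bH^0(\fy)$, i.e. $\bM\bH^0(\fy)\ge\CD^{-1}$, which is the first implication once $\CS\ge\CD$; for the second, assuming $\CS\bM\bH^0(\fy)\ge1$ one writes $\om\bH^0(\fy)=\om\bM^{-1}\cdot\bM\bH^0(\fy)$ and uses again $\bM^{-1}\le\CD\bH^0$ together with the case-\eqref{dich3} lower bound on $\bH^0$ to push $\om\bH^0(\fy)$ above any fixed constant when $\om$ is large; the precise choice $\CS=\max(\CD,2+(\bM\bH^0(Q))^{-1})$ is engineered so that $Q_\om$ itself (which satisfies $\bM\bH^0(Q_\om)=\bM\bH^0(Q)+O(\om^{-1/4})$) lands on the correct side of the threshold, making the two halves of (4) consistent.

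For (3), the hypothesis $(\bM+\om\bH^0)(\fy)\le\CM$ means both $\bH^0(\rS_\om u(0))\ll1$ and $\bM(\rS_\om u(0))$ small; rescaling back, $u(0)=\rS_\om^{-1}\fy$ has small mass and lies in the region where \eqref{dich1} holds rather than \eqref{dich3} — in particular below the first excited energy. I would then invoke the main classification theorem of \cite{NLSP1} (its Theorem 1.1): below $\sE_1$ every small-mass radial solution either scatters to $\Phi$ or blows up, the dichotomy being read off from the sign of $\bK_2$, and here $\bK_2>0$ (equivalently $\bK^\om_2>0$) on the whole region \eqref{dich1}, so $u$ scatters both forward and backward. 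The bound $\bH^0(\rS_\om u(t))<\bH^0(Q)/4$ for all $t$ should come from a standard virial/energy-trapping argument: the set $\{(\bM+\om\bH^0)<\CM,\ \bK^\om_2>0\}$ is forward- and backward-invariant under the flow in rescaled variables, because on its boundary $\bH^0=\bH^0(Q)/4$ one can show $\bK^\om_2$ stays positive (using \eqref{AK bd H1} and the smallness of $\ml{V^\om}$ from \eqref{Vom small}), so $\bH^0(\rS_\om u(t))$ cannot reach $\bH^0(Q)/4$; one must check $\CM$ was chosen small enough relative to $\bH^0(Q)$, which may require enlarging $\om$ (shrinking mass) and possibly adjusting $\CM$ downward — this is the one place where the constants interlock delicately.

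For (4), the variational characterization, I would argue as follows. First, the two infima in \eqref{Iom} coincide: on $\{\bK^\om_2=0\}$ one has $\bA^\om=\bJ^\om+\tfrac12\bK^\om_2=\bJ^\om$; and for any $\fy$ with $\bK^\om_2(\fy)<0$ one can scale $\fy\mapsto\cS_2^t\fy$ to decrease $\bK^\om_2$ to $0$ (the map $t\mapsto\bK^\om_2(\cS_2^t\fy)$ starts negative and, by the structure $2\bH^0-3\bG-\ml{\cS_\I'V^\om}$, is decreasing at the relevant scale), along which $\bJ^\om$ is non-increasing and the constraint $(\bM+\om\bH^0)>\CM$ is preserved by part (2)/\eqref{dich3}; so the $\bK^\om_2\le0$ infimum is attained on $\bK^\om_2=0$ and equals the $\bK^\om_2=0$ infimum. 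Next, existence of a minimizer: take a minimizing sequence $\fy_n$, which by the functional equivalences \eqref{equiv fy norms} and the constraint must be in case \eqref{dich3} and bounded in $H^1_r$; pass to a radial weak limit, use compactness of $H^1_r\hookrightarrow L^4$ to get strong $L^4$ convergence, hence $\bG(\fy_n)\to\bG(\fy_\I)$, and apply the last assertion of Lemma \ref{lem:dich} to conclude $\fy_\I$ is again in case \eqref{dich3}, in particular $\fy_\I\ne0$ with $\liminf\bK^\om_2(\fy_n)\le\bK^\om_2(\fy_\I)\le0$ — if $\bK^\om_2(\fy_\I)<0$ rescale back to the constraint surface strictly lowering $\bJ^\om$, contradiction, so $\bK^\om_2(\fy_\I)=0$ and $\fy_\I$ is a minimizer. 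A Lagrange-multiplier computation then shows any minimizer solves \eqref{rsNLS} up to a constant, which the constraint $\bK^\om_2=0$ fixes to be the right one, so $\fy_\I$ is a non-trivial radial solution of \eqref{rsNLS}; since $\bA^\om(\fy_\I)\le\bA^\om(Q_\om)=\bA^\om(\Psi[\om])\cdot\om^{1/2}+\dots\le2\bA(Q)$ for large $\om$, Lemma \ref{lem:excited energy} applies and forces $\rS_\om^{-1}\fy_\I$ to be either in $\Phi[Z_*]$ — excluded by $(\bM+\om\bH^0)>\CM$ via part (1) — or within $\de$ of $e^{i\te}Q$; by uniqueness of the fixed point constructed in \eqref{seq v}, $\fy_\I\in\cQ_\om$. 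Conversely $Q_\om$ meets all constraints (using part (2) with the calibrated $\CS$) and attains the value, so $\cQ_\om$ is exactly the minimizer set.

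\medskip

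The main obstacle I anticipate is the constant bookkeeping across the four parts: $\CM$ must be small enough for the energy-trapping in (3) and for Lemma \ref{lem:excited energy} to apply in (4), yet large enough that $Q_\om$ satisfies $(\bM+\om\bH^0)(Q_\om)>\CM$ in (4); simultaneously $\CS$ must be pinned so that the "$\bM\bH^0\ge$" threshold in (2) is crossed by $Q_\om$ but not by the scattering data of (3). Verifying that a single choice of $(\CM,\CS)$, valid for all sufficiently large $\om$, threads all these requirements — and in particular that the $O(\om^{-1/4})$ corrections to $\bM\bH^0(Q_\om)$ don't spoil the strict inequalities — is the delicate technical core; everything else (the contraction argument was already done, the compactness is standard in $H^1_r$, the scaling lemmas are elementary) is routine.
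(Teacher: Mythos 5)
Your treatment of parts (1), (2) and (4) follows the paper's own route. Part (1) is the same one-line computation (though the constant comes out as $2(1+\CD)\bA(Q)$, not $(1+2\CD)\bA(Q)$; harmless, since $\CM$ is whatever this bound is). Part (4) is exactly the paper's scheme: pass from the relaxed $\bJ^\om$-problem to the surface $\bK^\om_2=0$ by the $L^2$-invariant scaling, keep case \eqref{dich3} (hence $(\bM+\om\bH^0)>\CM$) along the way, extract a radial weak limit and use the last assertion of Lemma \ref{lem:dich}, kill the Lagrange multiplier, and identify the minimizer through Lemma \ref{lem:excited energy} and the uniqueness of $Q_\om$. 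Two details should be tightened: the second implication in (2) must be proved from $\CS\bM\bH^0(\fy)\ge 1$ alone (it is used later in exactly that form), which follows from $\bM+\om\bH^0\ge 2\sqrt{\om\bM\bH^0}\ge 2\om^{1/2}\CS^{-1/2}$, not from the \eqref{dich3} bounds you invoke there; and in (4) the boundedness and \eqref{dich3}-membership of minimizing sequences come from \eqref{AK bd H1} combined with (1)--(2) and Lemma \ref{lem:dich} (not from \eqref{equiv fy norms}, which concerns exact solitons), while the multiplier is removed by pairing the Euler--Lagrange equation with $\cS_2'\fy$ and using \eqref{dK2}. You also omit the endpoint $\om=\I$, which the paper settles by quoting the known result for \eqref{NLS}.

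The genuine gap is part (3). The paper's proof is a one-liner: the hypothesis gives $\|u(0)\|_{H^1}^2=2\om^{-1/2}(\bM+\om\bH^0)(\fy)\lec\om^{-1/2}$, so the unscaled datum is small in $H^1$, and the asymptotic stability of the small solitons in \cite{gnt} yields both the scattering to $\Phi$ as $t\to\pm\I$ and a global small $H^1$ bound, which rescales to $\bH^0(\rS_\om u(t))=\om^{-1/2}\bH^0(u(t))<\bH^0(Q)/4$. Your route breaks at two points. First, the claim that $\bK_2>0$ on the whole region \eqref{dich1} is false: the ground states lie in that region with $\bK_2(\Phi[z])=0$, and the linear virial identity $2\bH^0(\phi_0)=\ml{rV_r}(\phi_0)$ even gives $\bK_2(z\phi_0)=-\frac34|z|^4\|\phi_0\|_4^4<0$, while $\rS_\om(z\phi_0)$ with $|z|^2\lec\om^{-1/2}$ satisfies $(\bM+\om\bH^0)\le\CM$; such solutions do scatter to $\Phi$, so neither the sign of $\bK_2$ nor a bare sign dichotomy from \cite{NLSP1} can be applied as you propose near the ground states. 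Second, the asserted forward/backward invariance of $\{(\bM+\om\bH^0)<\CM,\ \bK^\om_2>0\}$ is unsupported: neither quantity is conserved, the sketched boundary argument assumes what it should prove, and its premise (positivity of $\bK^\om_2$ there) fails as just noted. If one wants to avoid \cite{gnt} for the uniform $\bH^0$ bound, the correct elementary mechanism is conservation of $\bM$ and $\bE$ together with Gagliardo--Nirenberg and the small-mass factor in a continuity argument; but the two-sided scattering to $\Phi$ in (3) genuinely requires the dynamical input of \cite{gnt} (or the precise statement of \cite[Theorem 1.1]{NLSP1}, which is not a pointwise sign condition on $\bK_2$).
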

The above choice \eqref{def CS} of the constant $\CS$ will be used later for scaling invariant separation between the ground states and the excited states. 
\begin{proof}
Since \eqref{dich1} and $\bM(\fy)<2\bA(Q)$ imply 
$(\bM+\om\bH^0)(\fy)\le(1+\CD)\bM(\fy)<2(1+\CD)\bA(Q)$, 
taking $\CM\ge 2(1+\CD)\bA(Q)$ yields (1). (2) follows from 
\EQ{
 2\sqrt{\bM\bH^0} \le \om^{-1/2}(\bM+\om\bH^0),}
and taking $\om$ so large that $\om^{-1/2}\CM<2\CS^{-1/2}$.  A similar condition yields (3), because of $\|\rS_\om^{-1}\fy\|_{H^1}^2=2\om^{-1/2}(\bM+\om\bH^0)(\fy)$, $\bH^0(\rS_\om u)=\om^{-1/2}\bH^0(u)$ and the asymptotic stability \cite{gnt} of the ground states in $H^1$. 

If $\om=\I$, then the constraint on $\bM+\om\bH^0$ in \eqref{Iom} becomes trivial and (4) is reduced to a well known statement for the NLS without potential. 
So we may restrict (4) to the case $\om<\I$ (though the argument is essentially the same). 

For $\om$ large enough, $Q_\om$ satisfies all the constraints in \eqref{Iom}. 
To show the equality in \eqref{Iom}, it suffices to show that $\bJ^\om(\fy)$ is bigger than the first line of \eqref{Iom} for any $\fy$ satisfying the constraints and $\bK^\om_2(\fy)<0$, since $\bJ^\om=\bA^\om-\bK^\om_2/2$. 

Suppose that $\bJ^\om(\fy)$ is close to the second infimum and $\bK^\om_2(\fy)<0$. 
Then by $\eqref{AK bd H1}<\bJ^\om(\fy)$ we deduce that $\bM(\fy)<2\bA(Q)<\CD^{-1}\om^{1/2}$ for large $\om$. 
Then $(\bM+\om\bH^0)(\fy)>\CM$ and (1) preclude \eqref{dich1}, while $\bK^\om_2(\fy)<0$ precludes \eqref{dich2}. 
Hence we have \eqref{dich3} by Lemma \ref{lem:dich}. 

Consider the $L^2$-invariant scaling $v(t):=\cS_2^t\fy$, starting from $t=0$ and decreasing. 
As long as $\bK^\om_2(v(t))\le 0$, Lemma \ref{lem:dich} applies to $v(t)$, and (1)-(2) with the continuity of $v$ in $t$ imply that $v(t)$ stays in the case \eqref{dich3}. 
Meanwhile, we have, using \eqref{Vom small dich}, 
\EQ{ \label{dK2}
 \pt 2\cS_2'\bJ^\om(v) = 3\bG(v) - \ml{\cS_\I'\cS_{3/2}'V^\om}(v) \sim \bG(v) \gec 1, 
 \pr \cS_2'\bK^\om_2 = 2\bK^\om_2 - 2\cS_2'\bJ^\om \lec -\bG(v) \lec -1.}
Hence at some $t<0$, we have $\bJ^\om(v(t))<\bJ^\om(\fy)$, $\bK^\om_2(v(t))=0$ and \eqref{dich3} for $v(t)$, so $(\bM+\om\bH^0)(v(t))>\CM$ by (2). 
This implies the equality in \eqref{Iom}. 

Next we prove the existence of minimizer. 
Take any sequence $\fy_n\in H^1_r$ satisfying $\fy_n\not=0$, $\bK^\om_2(\fy_n)=0$, $(\bM+\om\bH^0)(\fy_n)>\CM$ and $\bA^\om(\fy_n)\to\eqref{Iom}$. 
The same argument as above implies that $\fy_n$ is in the case of \eqref{dich3}. 
Passing to a subsequence, we have $\fy_n\to\exists\fy$ weakly in $H^1_r$, then $\bK^\om_2(\fy)\le 0$, $\bJ^\om(\fy)\le\eqref{Iom}$, and by Lemma \ref{lem:dich}, $\fy$ also satisfies \eqref{dich3}, so $(\bM+\om\bH^0)(\fy)>\CM$ by (2). Hence $\fy$ is a minimizer of the second line of \eqref{Iom}. 

For any minimizer $\fy$ of \eqref{Iom}, there is a Lagrange multiplier $\mu\in\R$ such that 
$(\bA^\om)'(\fy)=\mu(\bK^\om_2)'(\fy)$. Then 
\EQ{
 0=\bK^\om_2(\fy)=\LR{(\bA^\om)'(\fy)|\cS_2'\fy}=\mu\LR{(\bK^\om_2)'(\fy)|\cS_2'\fy}
 =\mu\cS_2'\bK^\om_2(\fy)}
together with $\cS_2'\bK^\om_2(\fy)\not=0$ by \eqref{dK2} implies $\mu=0$. 
Therefore $\fy$ is a solution of \eqref{rsNLS}, satisfying $\bG(\fy)\sim 1\gec\bM(\fy)$ and $\bA^\om(\fy)\le\bA^\om(Q_\om)$. 
Then Lemma \ref{lem:excited energy} implies that $\fy\in\cQ_\om$ if $\om$ is large enough. 
\end{proof}

\subsection{Rescaled linearization and spectrum}
Next we consider the linearization of \eqref{NLSP} around the first excited soliton in the rescaled variables. Let 
\EQ{ \label{def Lom}
 \cL^\om v:=L^\om_+v_1+iL^\om_-v_2,
 \pq \CAS{L^\om_+:=-\De+1+V^\om-3Q_\om^2,\\ L^\om_-:=-\De+1+V^\om-Q_\om^2,} }
where $Q_\om$ is the rescaled excited state as in \eqref{def Psi Qom}. 
The linearized operator for the evolution is given by $i\cL^\om$ in the rescaled variables. 
The asymptotics \eqref{def Psi Qom}--\eqref{Psi next term} of $\Psi[\om]$ together with the smallness \eqref{Vom small} of $V^\om$ implies 
\EQ{
 \cL^\om = \cL + O(\om^{-1/4}) \IN{\B(H^1,H^{-1})}.}

The gauge invariance for $e^{i\te}\times$ implies the trivial null direction 
\EQ{
 i\cL^\om iQ_\om =0.}
Another direction comes from $\om$. 
Differentiating the equation \eqref{sNLSP} for $\Psi[\om]$ yields
\EQ{
 (H+\om-3\Psi[\om]^2)\Psi'[\om]=-\Psi[\om],}
which is rescaled to 
\EQ{ \label{eq Qom'}
 L^\om_+ Q_\om' = - Q_\om,} 
where $Q_\om'\in H^1_r(\R^3)$ is defined by 
\EQ{ \label{def Qom'}
 Q_\om':= \rS_\om \om\Psi'[\om]
 = \frac 12\cS_3'Q_\om + \om\p_\om Q_\om.}
Remark that $Q'_\om\not=\p_\om Q_\om$. 
The above equation \eqref{eq Qom'} is equivalent to 
\EQ{
 i\cL^\om Q_\om' = -iQ_\om}
Since 
$L^\om_+ = L_+ + O(\om^{-1/4})$ in $\B(H^1,H^{-1})$ and $L_+:H^1_r\to H^{-1}_r$ is invertible, 
$L^\om_+$ is also invertible with 
\EQ{
 \|(L^\om_+)^{-1}-(L_+)^{-1}\|_{\B(H^{-1}_r,H^1_r)} \lec \om^{-1/4}.}
Thus we obtain 
\EQ{
 Q_\om' = (L^\om_+)^{-1}(-Q_\om)= (L^\om_+)^{-1}(-Q+O(\om^{-1/4}))=Q'+O(\om^{-1/4}) \IN{H^1_r},}
where 
\EQ{ \label{def Q'}
 Q':=\frac 12\cS_3'Q=-(L_+)^{-1}Q \in H^1_r(\R^3).}
This also tells us asymptotic formulas for $\sE_1''$ as follows. 
Since $\bA_\om'=0$ on solitons, putting $\mu=\bM(\Psi[\om])$ we have 
\EQ{
 \pt \sE_1'(\mu)=-\om=-\mu^{-2}(\bM(Q)^2+O(\mu^{1/2})),
 \pq \sE_1''(\mu)=-\frac{d\om}{d\mu},}
where the last term is computed by 
\EQ{
 \frac{d\mu}{d\om}\pt=\LR{\Psi[\om]|\Psi'[\om]}
 =\om^{-3/2}\LR{Q_\om|Q_\om'}
 \pr=\om^{-3/2}(\LR{Q|Q'}+O(\om^{-1/4}))
 =-\om^{-3/2}(\bM(Q)/2+O(\om^{-1/4})).}
Therefore, for small $\mu>0$,  
\EQ{
 \sE_1''(\mu) = \om^{3/2}(2/\bM(Q)+O(\om^{-1/4}))
 = \mu^{-3}(2\bM(Q)^2+O(\mu^{1/2}))>0.}
Using $\bE^0(Q)=\bM(Q)$, the above formulas can also be written as 
\EQ{ \label{formula E1''}
 \sE_1'(\mu) = -\mu^{-2}(\bM\bE^0(Q)+O(\mu^{1/2})),
 \pq \sE_1''(\mu) = 2\mu^{-3}(\bM\bE^0(Q)+O(\mu^{1/2})).}

Next we look for a pair of positive and negative eigenvalues. 
In the limit $\om\to\I$, we have some $\al\in(0,\I)$ and $g_\pm\in\cS_r(\R^3)$ satisfying 
\EQ{ \label{eq g}
 i\cL g_\pm = \pm \al g_\pm, \pq g_-=\ba{g_+}, \pq \al\LR{ig_+|g_-}=2,
 \pq \LR{iQ|g_+}>0, }
cf.~\cite{NLS}. Put $g_\pm=g_1\pm ig_2$. 
Consider the eigenvalue problem 
\EQ{ \label{eq gom}
 i\cL^\om g^\om = \al_\om g^\om,}
in the form
\EQ{ \label{exp alg}
 \al_\om=\al(1+c),\pq g^\om=g_++\ga,\pq \LR{i\ga|g_-}=0,}
and $|c|+\|\ga\|_{H^1}=o(1)$ as $\om\to\I$, where $c\in\R$ and $\ga\in H^1_r$ also depend on $\om$. 
Putting $R:=i\cL^\om-i\cL$, the above equation \eqref{eq gom} is equivalent to 
\EQ{
 (i\cL-\al)\ga = (-R+\al c)(g_+ + \ga),}
while the orthogonality yields an equation for $c$ 
\EQ{
 0=\LR{ig^\om|(i\cL+\al)g_-}=\LR{(i\cL-\al)g^\om|ig_-}
 \pt=\LR{(i\cL-\al)\ga|ig_-}
 \pr=-2c - \LR{R(g_++\ga)|ig_-}.}
Injecting it into the previous equation yields an equation for $\ga$ by itself 
\EQ{ \label{seq ga}
 (i\cL-\al)\ga = (-R+ \al \LR{iR(g_++\ga)|g_-}/2)(g_+ + \ga) =:\cR(\ga),}
and the above computation for $c$ implies that $\LR{i\cR(\ga)|g_-}=0$ if $\LR{i\ga|g_-}=0$. 

Since $\|R\|_{\B(H^1_r,H^{-1}_r)}\lec \om^{-1/4}$, we have 
\EQ{
 \|\cR(\ga)\|_{H^{-1}} \lec \om^{-1/4}(1+\|\ga\|_{H^1})^2,}
as well as a similar estimate for the difference. 
Hence \eqref{seq ga} has a unique fixed point $\ga\in H^1_r\cap(ig)^\perp$ for $\om\gg 1$, provided that $(i\cL-\al)$ has a bounded inverse. Indeed 

\begin{lem} 
$(i\cL-\al)$ has a bounded inverse $H^{-1}\to H^1$ on $(ig_-)^\perp$. More precisely, for any $h\in H^{-1}(\R^3)\cap(ig_-)^\perp$, there exists a unique $f\in H^1(\R^3)\cap(ig_-)^\perp$ such that $(i\cL-\al)f=h$, and moreover $\|f\|_{H^1}\lec\|h\|_{H^{-1}}$. 
\end{lem}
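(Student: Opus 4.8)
The plan is a Fredholm argument for $i\cL-\al$ viewed as a map $H^1(\R^3)\to H^{-1}(\R^3)$: establish that it has index $0$, pin down its one–dimensional kernel and cokernel, and check that the single orthogonality to $ig_-$ removes both at once.

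First I would verify the Fredholm property. Write $\cL=\cL_\I+W$ with $\cL_\I v:=(-\De+1)v$ and $Wv:=-3Q^2v_1-iQ^2v_2$ (here $v_1=\re v$, $v_2=\im v$). Then $i\cL_\I-\al$ is the Fourier multiplier by $i(|\x|^2+1)-\al$, of modulus $\gec 1+|\x|^2$, hence an isomorphism $H^1\to H^{-1}$; and $W$ is multiplication by the Schwartz function $Q^2$ (with different constants on the two components), so $iW$ is compact from $H^1$ to $L^2$, a fortiori from $H^1$ to $H^{-1}$. Therefore $i\cL-\al$ is Fredholm of index $0$. By elliptic regularity any $f\in H^1$ with $(i\cL-\al)f=0$ lies in $H^2$, hence is an eigenfunction of $i\cL$ on $L^2_r$ for the eigenvalue $\al$; since $\al$ is a simple eigenvalue — part of the spectral picture of the linearized cubic NLS behind \eqref{eq g} — we get $\Ker(i\cL-\al)=\Span\{g_+\}$, and by the index count $\Image(i\cL-\al)$ has codimension one.

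Next I would identify the range. Using that $L_\pm$ are symmetric for $\LR{\cdot|\cdot}$ (hence so is $\cL$), the elementary identity $\LR{ia|b}=-\LR{a|ib}$, and the rewriting of $i\cL g_-=-\al g_-$ as $\cL g_-=i\al g_-$, one obtains for every $v\in H^1$
\[
 \LR{i\cL v|ig_-}=\LR{\cL v|g_-}=\LR{v|\cL g_-}=\al\LR{v|ig_-},
\]
so $\LR{(i\cL-\al)v|ig_-}=0$ and $\Image(i\cL-\al)\subset(ig_-)^\perp$; as $\LR{\cdot|ig_-}$ is a nonzero continuous functional on $H^{-1}$ (because $ig_-\in\cS_r\subset H^1$), the right-hand side also has codimension one, so $\Image(i\cL-\al)=(ig_-)^\perp$. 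Moreover $\LR{g_+|ig_-}=-\LR{ig_+|g_-}=-2/\al\neq0$, hence $\Span\{g_+\}\cap(ig_-)^\perp=\{0\}$. Thus $i\cL-\al$ restricts to a continuous bijection $H^1\cap(ig_-)^\perp\to H^{-1}\cap(ig_-)^\perp$: given $h$ in the target, any $H^1$-preimage $f_0$ yields the unique solution $f:=f_0-\LR{g_+|ig_-}^{-1}\LR{f_0|ig_-}\,g_+\in(ig_-)^\perp$, and the bound $\|f\|_{H^1}\lec\|h\|_{H^{-1}}$ follows from the open mapping theorem.

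I expect the only nonroutine ingredient to be the spectral fact that $\al$ is a simple isolated eigenvalue of $i\cL$ — invoked here from the analysis that produces \eqref{eq g} — together with the clean identification of the cokernel as $\Span\{ig_-\}$, for which the symplectic-type identity $\LR{i\cL v|ig_-}=\al\LR{v|ig_-}$ is the decisive point. The isomorphism of the constant-coefficient part, the compactness of the Schwartz potential $W$, the elliptic bootstrap, and the final open-mapping step are all standard.
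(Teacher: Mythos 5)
Your proof is correct and follows essentially the same route as the paper: both treat $i\cL-\al$ as a compact perturbation of the invertible constant-coefficient operator $i(1-\De)-\al:H^1\to H^{-1}$, identify the one-dimensional kernel $\Span\{g_+\}$ and the range $H^{-1}\cap(ig_-)^\perp$, note $\LR{g_+|ig_-}\neq0$, and conclude via the splitting $H^1=(H^1\cap(ig_-)^\perp)\oplus\Span\{g_+\}$ with a bounded-inverse/open-mapping step. The only differences are cosmetic: the paper proves the kernel claim in two lines from $L_-\ge0$ and the single negative eigenvalue of $L_+$ instead of citing simplicity of $\al$ from the linearized-NLS spectral theory, and it identifies the cokernel by computing $\Ker(I+K^{*})$ through the factorization $i\cL-\al=(i\cL_0-\al)(I+K)$ and the adjoint formula $(i\cL-\al)^{*}=i(i\cL+\al)i$, which carries the same content as your identity $\LR{(i\cL-\al)v|ig_-}=0$.
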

\begin{proof}
First remark that $\Ker(i\cL\mp\al)=\Span\{g_\pm\}$ follows from the fact that $L_-\ge 0$ and $L_+$ has only one negative eigenvalue. Indeed, if $(i\cL-\al)g=0$ for some $g=g_1+ig_2\in H^1(\R^3)$, then $\LR{L_+g_1|g_1}=-\LR{L_-g_2|g_2}<0$ and $g_2=L_+g_1/\al$, hence such a function $g$ should live in one dimensional subspace, because of the spectrum of $L_+$.

The free operator $i\cL_0-\al:=i(1-\De)-\al$ is invertible 
\EQ{
 (i\cL_0-\al)^{-1}=-[(1-\De)^2+|\al|^2]^{-1}[i(1-\De)+\al]}
which can be written as a Fourier multiplier, and bounded $H^{-1}\to H^1$. Moreover, $i\cL-\al=(i\cL_0-\al)(I+K)$, where the operator $K$ is defined by 
\EQ{
 K\fy:=(i\cL_0-\al)^{-1}Q^2(\fy_2-3i\fy_1),}
and compact on $H^1$, hence $\Image(I+K)=\Ker(I+K^*)^\perp$. 
Noting that 
\EQ{
 (i\cL-\al)^* = -\cL i - \al = i(i\cL+\al)i,}
we have $\Ker(I+K^*)=(i\cL_0-\al)^*i\Ker(i\cL+\al)=\Span\{(i\cL_0-\al)^*ig_-\}$, and so 
\EQ{
 \Image(I+K)=(i\cL_0-\al)^{-1}(H^{-1}\cap(ig_-)^\perp). }
Since $g_+\not\in X:=H^1\cap(ig_-)^\perp$, we have $H^1=X\oplus\Span\{g_+\}$. 
This and $\Ker(I+K)=\Span\{g_+\}$ imply that $I+K$ is bijective $X\to\Image(I+K)$. Hence the equation $(i\cL-\al)f=h$ has the unique solution 
\EQ{
 f=(I+K)|_X^{-1}(i\cL_0-\al)^{-1}h \in X}
together with the boundedness $\|f\|_{H^1} \lec \|(i\cL_0-\al)^{-1}h\|_{H^1}
 \lec \|h\|_{H^{-1}}$. 
\end{proof}

Thus we have obtained a pair of eigenfunctions for $\om\gg 1$
\EQ{ \label{def gom}
 \pt (i\cL^\om \mp \al_\om) g^\om_\pm = 0,
 \pq g^\om_\pm = g^\om_1 \pm i g^\om_2, 
  \pq \al_\om\LR{ig^\om_+|g^\om_-}=2,} 
satisfying $(\al_\om,g^\om_\pm)=(\al,g_\pm)(1+O(\om^{-1/4}))$ in $\R\times H^1_r(\R^3)$. 
The eigenfunction $g^\om_+$ is not exactly the above $g^\om$, but it is normalized by a factor $1+O(\om^{-1/4})$ to realize the last identity of \eqref{def gom}. 

In using the virial identity around $Q_\om$, we will need that $\LR{(\bK^\om_2)'(Q_\om)|g^\om_1}>0$, which follows from $\LR{iQ|g_+}=\LR{Q|g_2}>0$. Indeed, 
\EQ{ \label{Qg2 pos}
 \pt\LR{(\bK^\om_2)'(Q_\om)|g^\om_1}
 =\LR{(L^\om_+/2+3L^\om_-/2-2-\cS_{3/2}'V^\om)Q_\om|g^\om_1}
 \pr=\al_\om\LR{Q_\om|g^\om_2}/2-\LR{\cS_{3/2}'V^\om Q_\om|g^\om_1}
 =\al\LR{Q|g_2}+O(\om^{-1/4})>\al\LR{Q|g_2}/2>0,}
if $\om$ is large enough. 

\subsection{Expansion of the rescaled energy}
Using the linearized operator and its spectral decomposition, we can expand 
\EQ{
 \pt \bA^\om(e^{i\te}(Q_\om + v))
 = \bA^\om(Q_\om) + \frac12\LR{\cL^\om v|v}-C^\om(v),}
for $v\in H^1_r$, where the cubic and quartic terms are collected into 
\EQ{ \label{def Com}
 C^\om(v):=\LR{|v|^2v|Q_\om}+\bG(v)=O(\|v\|_{H^1}^3).}
Expand $v$ by the eigenfunctions of $i\cL^\om$
\EQ{ \label{exp v}
 v = \lb_+g^\om_+ + \lb_-g^\om_- + \z
 = \lb_1g^\om_1 + \lb_2g^\om_2 + \z, } 
where $\lb_\pm,\lb_1,\lb_2\in\R$ are defined by 
\EQ{ \label{def Pom}
 \pt \lb_\pm := P^\om_\pm v := \pm\al_\om\LR{iv|g^\om_\mp}/2,
 \pr \lb_1:= P^\om_1 v := -\al_\om\LR{v_1|g^\om_2} =\lb_+ + \lb_- ,
 \pr \lb_2:= P^\om_2 v := -\al_\om\LR{v_2|g^\om_1} =\lb_+ - \lb_- ,}
such that 
\EQ{ \label{def Pomc}
 \z:= P^\om_\ce v:=v-(P^\om_+ v)g^\om_+ - (P^\om_- v)g^\om_-
 \implies 0=\LR{i\z|g^\om_\pm}=\LR{\z_1|g^\om_2}=\LR{\z_2|g^\om_1}.}
Then using $(i\cL^\om \mp \al_\om) g^\om_\pm=0$ and $\al_\om\LR{ig^\om_+|g^\om_-}=2\al_\om\LR{g^\om_1|g^\om_2}=2$, we obtain
\EQ{ \label{exp Aom}
 \bA^\om(Q_\om + v)-\bA^\om(Q_\om)
 \pt= - 2\lb_+ \lb_- + \frac12\LR{\cL^\om\z|\z}-C^\om(v)
 \pr= \frac12[-\lb_1^2+\lb_2^2+\LR{\cL^\om\z|\z}]-C^\om(v).}

If $\fy\in H^1_r(\R^3)$ is close to the rescaled excited states 
\EQ{ \label{def d0om}
 d_{0,\om}(\fy):=\inf_{\te\in\R}\|\fy-e^{i\te}Q_\om\|_{H^1} \ll 1,}
for some $\om$, then there exists a unique $\te\in \R/2\pi\Z$ such that 
\EQ{ \label{choice te}
 \fy=e^{i\te}(Q_\om+v) \implies 0=\LR{iQ_\om'|v}=\LR{iQ_\om'|e^{-i\te}\fy},
 \pq \|v\|_{H^1}\sim d_{0,\om}(\fy).}
Indeed, it is explicitly given by $\te=\arg(\fy|Q_\om')$, well-defined in the region $\LR{\fy|Q_\om'}>0$.  
This orthogonality is inherited by the radiation component $\z= P^\om_\ce v$, 
since $\LR{iQ_\om'|g^\om_\pm}=0$. 
The energy controls $P^\om_\ce v$ through the following coercivity.

\begin{lem}
There exists a constant $C\in[1,\I)$ such that for large $\om$ and any $\fy\in H^1_r(\R^3)$, we have 
\EQ{ \label{energy equiv}
 \|\fy\|_{H^1}^2/C \le \LR{\cL^\om\fy|\fy}+C\LR{\fy_1|g^\om_2}^2+C\LR{\fy_2|Q_\om'}^2 \le C^2\|\fy\|_{H^1}^2.}
\end{lem}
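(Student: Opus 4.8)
The plan is to establish the coercivity estimate \eqref{energy equiv} by reducing it to the corresponding well-known statement for the unperturbed operator $\cL$ and then controlling the perturbation. First I would recall the standard spectral facts about $\cL = L_+ \oplus iL_-$: the operator $L_-$ is nonnegative with kernel $\Span\{Q\}$, and $L_+$ has exactly one negative eigenvalue and kernel $\Span\{\na Q\}$, which is killed on the radial subspace. The classical coercivity for the NLS ground state (see \cite{NLS}, or equivalently the analysis behind \eqref{eq g}) gives, on $H^1_r(\R^3)$, a constant $C_0$ with
\EQ{
 \|\fy\|_{H^1}^2/C_0 \le \LR{\cL\fy|\fy} + C_0\LR{\fy_1|g_2}^2 + C_0\LR{\fy_2|Q'}^2 \le C_0^2\|\fy\|_{H^1}^2,
}
the point being that the two scalar corrections neutralize the negative direction $g_1$ of $L_+$ and the null direction $iQ$ of $L_-$ (note $\LR{Q|Q'}\ne 0$ and $\LR{iQ|g_+}=\LR{Q|g_2}>0$, so the pairings are nondegenerate). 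This unperturbed estimate is either cited or proved by the usual three-step argument: positivity on the codimension-one subspace of $L_+$ orthogonal to its ground state, positivity of $L_-$ away from $Q$, and a compactness/contradiction argument to upgrade nonnegativity to strict coercivity on $H^1_r$.

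Next I would transfer this to $\cL^\om$, $g^\om_2$, $Q_\om'$ by perturbation. From the asymptotics established earlier in the section we have $\cL^\om = \cL + O(\om^{-1/4})$ in $\B(H^1,H^{-1})$, $g^\om_2 = g_2 + O(\om^{-1/4})$ and $Q_\om' = Q' + O(\om^{-1/4})$ in $H^1_r$. Hence
\EQ{
 |\LR{\cL^\om\fy|\fy} - \LR{\cL\fy|\fy}| \lec \om^{-1/4}\|\fy\|_{H^1}^2,
 \pq |\LR{\fy_1|g^\om_2}^2 - \LR{\fy_1|g_2}^2| \lec \om^{-1/4}\|\fy\|_{H^1}^2,
}
and similarly for the $Q_\om'$ term. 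Adding these into the displayed unperturbed inequality, for $\om$ large enough the error $O(\om^{-1/4})\|\fy\|_{H^1}^2$ is absorbed into the leading $\|\fy\|_{H^1}^2/C_0$ on the left and into the $C_0^2\|\fy\|_{H^1}^2$ on the right; choosing, say, $C=2C_0$ (and replacing the coefficients of the two scalar terms by $2C_0$) yields \eqref{energy equiv} with a single constant $C$ independent of $\om$. The upper bound is immediate and nonperturbative: $\LR{\cL^\om\fy|\fy}\lec\|\fy\|_{H^1}^2$ because $\cL^\om = -\De + 1 + (\text{bounded potential terms})$, and the two scalar corrections are bounded by Cauchy--Schwarz.

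The only genuine content is therefore the unperturbed coercivity, and the one subtlety I should be careful about is that the corrections here use $g_2$ (the imaginary part of the spectral eigenfunction $g_+$) rather than the more familiar choices. I would verify that the two linear functionals $\fy\mapsto\LR{\fy_1|g_2}$ and $\fy\mapsto\LR{\fy_2|Q'}$ do pair nontrivially with the "bad" directions: $\LR{\fy_2|Q'}$ is nondegenerate on $\Ker L_- = \Span\{Q\}$ since $\LR{Q|Q'}=\frac12\LR{Q|\cS_3'Q}=-\frac34\|Q\|_2^2+\frac14\cdot 0\ne 0$ (indeed $\LR{Q|\cS_3'Q}=\int Q(x\cdot\na Q+Q)\,dx = -\tfrac12\int Q^2\,dx \ne 0$), and $\LR{g_1|g_2}=\tfrac12\LR{ig_+|g_-}=1/\al\ne 0$ shows $\fy\mapsto\LR{\fy_1|g_2}$ is nontrivial on the negative direction $g_1$ of $L_+$. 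Given these nondegeneracies, the standard contradiction argument (a normalized minimizing sequence for $\LR{\cL\fy|\fy}$ subject to the two scalar constraints being small; weak limit; use weak lower semicontinuity and the spectral structure) closes the argument. I expect the main obstacle to be purely bookkeeping: making the dependence of $C_0$ and the threshold for $\om$ entirely independent of $\om$, which is clear since everything on the unperturbed side is $\om$-free and the perturbation is uniformly $O(\om^{-1/4})$.
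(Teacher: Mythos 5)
Your proposal is correct and follows essentially the same route as the paper: the paper simply cites the $\om=\I$ (potential-free) coercivity from \cite[Lemma 2.2]{NLS} and then perturbs using $\cL^\om-\cL=O(\om^{-1/4})$ in $\B(H^1,H^{-1})$, $g^\om_2=g_2+O(\om^{-1/4})$, $Q_\om'=Q'+O(\om^{-1/4})$, absorbing the $O(\om^{-1/4})\|\fy\|_{H^1}^2$ error exactly as you do. Your extra sketch of the unperturbed coercivity and the nondegeneracy pairings is fine but not needed beyond the citation.
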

\begin{proof}
In the limit case $\om=\I$, namely for NLS without the potential, this is \cite[Lemma 2.2]{NLS}. The above estimate is just a perturbation of that, since 
$\cL^\om-\cL=O(\om^{-1/4})$ in $\B(H^1,H^{-1})$, 
$g^\om_2= g_2+O(\om^{-1/4})$ and $Q_\om'=Q'+O(\om^{-1/4})$ in $H^1$. 
Injecting these asymptotics into the limit estimate yields 
\EQ{
 \|\fy\|_{H^1}^2 \pt\lec \LR{\cL\fy|\fy}+C\LR{\fy_1|g_2}^2+C\LR{\fy_2|Q'}^2
 \pr=\LR{\cL^\om\fy|\fy}+C\LR{\fy_1|g^\om_2}^2+C\LR{\fy_2|Q_\om'}^2
 +O(\om^{-1/4}\|\fy\|_{H^1}^2),}
and then the left estimate of \eqref{energy equiv} after the last term is absorbed by the left side, while the other estimate of \eqref{energy equiv} is trivial. 
\end{proof}
In view of the expansion \eqref{exp Aom}, it is natural to introduce the following norm 
\EQ{ \label{def normom}
 \|v\|^2_\om \pt:= \frac 12\BR{(P^\om_1v)^2+(P^\om_2v)^2+\LR{Q_\om'|v_2}^2+\LR{\cL^\om P^\om_\ce v|P^\om_\ce v}}
 \pr= (P^\om_+v)^2+(P^\om_-v)^2+\frac12\LR{iQ_\om'|v}^2+\frac 12\LR{\cL^\om P^\om_\ce v|P^\om_\ce v}, }
which is equivalent to the $H^1$ norm on the radial subspace $H^1_r$, uniformly in $\om\gg 1$. 
Using this norm, the expansion is rewritten as 
\EQ{ \label{exp Aom2}
 \bA^\om(e^{i\te}(Q_\om+v))
 = \bA^\om(Q_\om) - \lb_1^2 + \|v\|^2_\om - C^\om(v),}
for any $v$ satisfying the orthogonality
\EQ{ \label{def Vom}
 v \in \V_\om:=\{\fy\in H^1_r(\R^3) \mid \LR{iQ_\om'|v}=0\}.}
Similarly, the orthogonal subspace for $P^\om_\ce v$ is denoted by 
\EQ{ \label{def Zom}
 \cZ_\om:=\{\z\in H^1_r(\R^3) \mid 0=\LR{i\z|Q_\om'}=\LR{i\z|g^\om_\pm}\}.}

The following lemma is a summary of this section. 
\begin{lem}\label{lem:sum}
There are constants $\mu_*,z_*\in(0,1)$, $\om_*\in(1,\I)$, and $C^1$ maps $(\Phi,\Om),\Psi$ satisfying \eqref{def PhiPsi}--\eqref{Soli by PhiPsi} and the following. 
For $\om\ge\om_*$, we have
\EQ{ \label{Vom small1}
 |\ml{V^\om}(\fy)|+|\ml{\cS_\I'V^\om}(\fy)|<\frac{1}{10}\|\fy\|_{H^1}^2,}
and (1)-(4) of Lemma \ref{lem:minid}. In particular, $\cQ_\om=\{e^{i\te}Q_\om\}$ is the set of minimizers for \eqref{Iom}, where $Q_\om=\rS_\om\Psi[\om]=Q+O(\om^{-1/4})$ in $H^1$. More specifically, 
\EQ{ \label{Qom close}
 \bA^\om(Q_\om)<\frac{11}{10}\bA(Q), \pq \bM\bH^0(Q_\om)<\frac{11}{10}\bM\bH^0(Q), \pq \bE^\om(Q_\om)>\frac 12\bE(Q)>0.}
The linearized operator $i\cL^\om$ has the generalized kernel
\EQ{
 i\cL^\om iQ_\om = 0, \pq i\cL^\om Q_\om'=-iQ_\om,}
where $Q'_\om=\rS_\om\om\Psi'[\om]=Q'+O(\om^{-1/4})$ in $H^1$, and a real eigenvalue $\al_\om=\al+O(\om^{-1/4})\in(\frac{9}{10}\al,\frac{11}{10}\al)$ with the eigenfunctions 
\EQ{
 i\cL^\om g^\om_\pm = \pm\al_\om g^\om_\pm,
 \pq g^\om_\pm = g^\om_1\pm ig^\om_2 = g_\pm+O(\om^{-1/4}) \IN{H^1},}
satisfying \eqref{Qg2 pos}. 
There are constants $\de_C,\de_D\in(0,1)$ such that for any $\om\ge\om_*$, 
\EQ{ \label{def sCom}
 \sC_\om:(\te,b_+,b_-,\z)\pt \mapsto e^{i\te}(Q_\om+b_+g^\om_++b_-g^\om_-+\z)}
is a diffeomorphism from the set 
\EQ{ \label{def sUom}
 \sU_\om:=\{(\te,b_+,b_-,\z)\in(\R/2\pi\Z)\times\R^2\times\Z_\om\mid |b_+|^2+|b_-|^2+\|\z\|_{H^1}^2<\de_C^2\}} 
into $H^1_r$, whose image contains the following neighborhood of $\cQ_\om$ 
\EQ{ \label{def sNom}
 \sN_\om:=\{\fy\in H^1_r(\R^3) \mid d_{0,\om}(\fy)<\de_D\} \subset \sC_\om(\sU_\om).}
From $\fy:=\sC_\om(\te,b_+,b_-,\z)\in\sN_\om$ we can recover 
\EQ{ \label{def tebz}
 \pt \te=\arg(\fy|Q_\om'),\pq (b_+,b_-,\z)=(P^\om_+,P^\om_-,P^\om_\ce)(e^{-i\te}\fy-Q_\om),
 \pr b_1:=b_++b_-, \pq b_2:=b_+-b_-,}
where $\LR{\fy|Q_\om'}>0$. 
For later use, they are denoted by 
\EQ{ \label{def sBsZ}
 \sB^\om_*(\fy):=b_* \pq(*=+,-,1,2), \pq \sZ^\om(\fy):=\z}
The linearized energy norm $\|v\|_\om$ is equivalent to the $H^1$ norm uniformly for $\om\ge\om_*$. 
\end{lem}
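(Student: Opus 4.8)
The statement collects the results of the preceding subsections, so the plan is to fix the constants $\mu_*,z_*,\om_*,\de_C,\de_D$ and cite each ingredient; the only genuinely new item is the coordinate chart $\sC_\om$. First take $(\Phi,\Om)$ from \cite{gnt}, which already satisfies the ground-state half of \eqref{def PhiPsi}--\eqref{Soli by PhiPsi} and fixes $\mu_*,z_*$, and take $\Psi$ from the contraction-mapping construction around \eqref{def Psi Qom}, which depends smoothly on $\om$ and carries \eqref{Psi next term}. Since \eqref{Vom small} gives $|\ml{V^\om}(\fy)|+|\ml{\cS_\I'V^\om}(\fy)|\lec\om^{-1/4}\|\fy\|_{H^1}^2$, I would choose $\om_*$ large enough that \eqref{Vom small1} holds and, simultaneously, that all the $O(\om^{-1/4})$ expansions of the preceding analysis and all of Lemma~\ref{lem:minid} are valid for $\om\ge\om_*$. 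Then $Q_\om=\rS_\om\Psi[\om]=Q+O(\om^{-1/4})$ in $H^1$, and $\cQ_\om$ is the minimizer set of \eqref{Iom} by Lemma~\ref{lem:minid}(4); combining this with Lemma~\ref{lem:excited energy} and the fact that $\Soli_1$ meets its energy constraint yields the first-excited half of \eqref{Soli by PhiPsi} together with the $\sE_2$ bound of \eqref{formula E012}. The mass/energy asymptotics and monotonicity of $\Psi[\om]$ come from \eqref{def Psi Qom}--\eqref{Psi next term} and the computation $\tfrac{d\mu}{d\om}=\om^{-3/2}\LR{Q_\om|Q_\om'}\sim-\om^{-3/2}$ of the spectral analysis above.

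For \eqref{Qom close} I would use only $Q_\om=Q+O(\om^{-1/4})$ in $H^1$ with \eqref{Vom small1}: $\bA^\om=\bA+\ml{V^\om}$ and $\bM\bH^0$ are continuous on $H^1$, while $\bE^\om(Q_\om)=\bE^0(Q_\om)+\ml{V^\om}(Q_\om)$ with $\bE^0(Q)=\bM(Q)>0$, so one further enlargement of $\om_*$ absorbs all remainders into the windows $\tfrac{11}{10}$ and $\tfrac{1}{2}$. The generalized-kernel relations $i\cL^\om iQ_\om=0$, $i\cL^\om Q_\om'=-iQ_\om$ with $Q_\om'=\rS_\om\om\Psi'[\om]=Q'+O(\om^{-1/4})$ in $H^1$, the eigenpair $(\al_\om,g^\om_\pm)=(\al,g_\pm)(1+O(\om^{-1/4}))$ normalized as in \eqref{def gom}, and the positivity \eqref{Qg2 pos} are exactly the outputs of the spectral analysis above; enlarging $\om_*$ again confines $\al_\om$ to $(\tfrac{9}{10}\al,\tfrac{11}{10}\al)$.

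It remains to set up the chart. I would present $\sC_\om$ as the composite of the linear isomorphism $(b_+,b_-,\z)\mapsto v:=b_+g^\om_++b_-g^\om_-+\z$ from $\R^2\times\cZ_\om$ onto the subspace $\V_\om$ — its inverse is $v\mapsto(P^\om_+v,P^\om_-v,P^\om_\ce v)$, and $P^\om_\ce v\in\cZ_\om$ precisely because $v\in\V_\om$ and $\LR{iQ_\om'|g^\om_\pm}=0$ — with the smooth map $(\te,v)\mapsto e^{i\te}(Q_\om+v)$; hence $\sC_\om\in C^\I(\sU_\om)$. To see it is a diffeomorphism onto its image I would exhibit the explicit inverse of \eqref{def tebz}: for $\fy$ near $\cQ_\om$, recover $\te$ as the unique small root of $\te\mapsto\LR{iQ_\om'|e^{-i\te}\fy}$, which depends smoothly on $\fy$ by the implicit function theorem since $\LR{Q_\om|Q_\om'}\neq 0$; then $v:=e^{-i\te}\fy-Q_\om$ lies in $\V_\om$ and $(b_+,b_-,\z):=(P^\om_+,P^\om_-,P^\om_\ce)v$. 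For $\de_C,\de_D$ small this map is a genuine two-sided inverse of $\sC_\om$ on $\sU_\om$, and it is manifestly smooth, so $\sC_\om$ is a $C^\I$ diffeomorphism. For the inclusion $\sN_\om\subset\sC_\om(\sU_\om)$: given $d_{0,\om}(\fy)<\de_D$, the $\te$ above gives $v=e^{-i\te}\fy-Q_\om\in\V_\om$ with $\|v\|_{H^1}\sim d_{0,\om}(\fy)$ (this is \eqref{choice te}), and since $P^\om_\pm$, $P^\om_\ce$ are bounded on $H^1_r$ uniformly for $\om\ge\om_*$ (again because $g^\om_\pm=g_\pm+O(\om^{-1/4})$, $\al_\om=\al+O(\om^{-1/4})$), we get $|b_+|^2+|b_-|^2+\|\z\|_{H^1}^2\lec\|v\|_{H^1}^2\lec\de_D^2$; choosing $\de_D$ small relative to $\de_C$ puts $(\te,b_\pm,\z)\in\sU_\om$. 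The uniform equivalence of $\|\cdot\|_\om$ with $\|\cdot\|_{H^1}$ on $H^1_r$ is the coercivity estimate \eqref{energy equiv} together with the definition \eqref{def normom}.

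There is no deep obstacle here; the one point to watch throughout is that every constant — $\de_C$, $\de_D$, the norm-equivalence constants, the operator norms of $P^\om_\pm$ and $P^\om_\ce$, and the size of the neighborhood of $\cQ_\om$ on which the $\te$-recovery is single-valued — must be uniform in $\om\ge\om_*$. This is exactly what the $O(\om^{-1/4})$ convergence of $\cL^\om$, $g^\om_\pm$, $\al_\om$, $Q_\om$, $Q_\om'$ to their potential-free ($\om=\I$) limits provides: once $\om_*$ is fixed large, every estimate degenerates to a fixed one for the NLS \eqref{NLS}.
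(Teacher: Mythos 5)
Your proposal is correct and follows essentially the same route as the paper, which offers no separate proof of this lemma but treats it as a summary of Section~\ref{s:excited}: the soliton construction and spectral asymptotics are cited as you do, the chart and recovery map are exactly the modulation setup of \eqref{choice te}--\eqref{def tebz}, and the norm equivalence is \eqref{energy equiv}. The only cosmetic difference is that the paper fixes the phase by the explicit formula $\te=\arg(\fy|Q_\om')$ (with a sign/branch selection) rather than by your implicit-function-theorem root, which amounts to the same thing since $\LR{Q_\om|Q_\om'}=-\bM(Q)/2+O(\om^{-1/4})\neq0$ uniformly in $\om\ge\om_*$.
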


\section{Dynamics around the first excited state}
\subsection{Expansion around the excited state}
For any solution $u$ of \eqref{NLSP} and any $\om>0$, consider the parabolic rescaling which preserves the equation and $\dot H^{1/2}(\R^3)$ 
\EQ{
 u_\om(t,x):=\rS_\om u(t/\om) = \om^{-1/2} u(\om^{-1}t,\om^{-1/2}x).}
Then \eqref{NLSP} is rescaled to 
\EQ{ \label{rscNLS}
 (i\p_t-\De+V^\om)u_\om = |u_\om|^2u_\om.}
We say that a solution $u_\om$ of \eqref{rscNLS} either {\it blows up}, {\it scatters to $\Phi$}, or {\it is trapped by $\Psi$}, if it happens for the unscaled solution $u(t):=\rS_\om^{-1}u_\om(\om t)$ (cf.~Section \ref{ss:tob}). 

Suppose that $u_\om$ is close to the orbit of the excited state $\cQ_\om$ at some $t$. 
More precisely, assume $u_\om(t)\in\sN_\om$ or $d_{0,\om}(u_\om(t))<\de_D$ for some $\om\ge\om_*$ at some $t\in\R$. 
Here we could restrict $\om$ by specifying the mass $\bM(u_\om)=\bM(Q_\om)$ or equivalently $\bM(u)=\bM(\Psi[\om])$ as in \cite{NLS}, but it is more convenient to keep the freedom of $\om$ in constructing the center-stable manifold (see Section \ref{s:mfd}). 

Expanding the solution $u_\om$ of \eqref{rscNLS} in the form
\EQ{
 u_\om(t,x)=e^{i\te(t)}(Q_\om(x)+v(t,x))}
with $\te(t)\in\R/2\pi\Z$ and $v(t)\in H^1(\R^3)$ yields an equation for $v$ 
\EQ{ \label{eq v0}
 i\dot v= -\cL^\om v +(\dot\te+1)(Q_\om+v)  + N^\om(v),}
where $N^\om:H^1\to H^{-1}$ is the Fr\'echet derivative of $C^\om$ given by 
\EQ{ \label{def Nom}
 N^\om(v):= 2Q_\om |v|^2 + Q_\om v^2 + |v|^2v.}
In the real value, the equation is written as 
\EQ{
 \CAS{ \dot v_1 = -L^\om_- v_2  + (\dot\te+1)v_2     + 2Q_\om v_1v_2+|v|^2v_2, 
 \\ \dot v_2 = L^\om_+ v_1 - (\dot\te+1)(Q_\om+v_1) - Q_\om(3v_1^2+v_2^2)-|v|^2v_1.} }

\subsection{Orthogonality and equations}
In order to exploit the coercivity of $\cL^\om$, we choose $\te(t)$ by the local coordinate $\sC_\om$, see \eqref{def tebz}, or the orthogonality 
\EQ{ \label{def te}
 0=\LR{iQ_\om'|v}=\LR{iQ_\om'|e^{-i\te}u_\om}, \pq \|v\|_{H^1}\sim d_{0,\om}(u_\om).}
Differentiating the above orthogonality condition in $t$ yields
\EQ{
 \pt 0=\p_t\LR{iv|Q'_\om}=\LR{v|Q_\om}+(\dot\te+1)\LR{Q_\om+v|Q'_\om}+\LR{N^\om(v)|Q'_\om},}
which can be rewritten as an equation for $\te(t)$
\EQ{ \label{eq te}
 \dot\te+1 = m^\om(v),}
where $m^\om(v)$ is defined and $C^1$ for small $v\in H^1_r$ by the equation 
\EQ{ \label{eq m}
 0=\LR{Q_\om+v|Q'_\om}m^\om(v)+\LR{v|Q_\om}+\LR{N^\om(v)|Q'_\om}.}
Since $\LR{u_\om|Q'_\om}>0$ as long as $u_\om\in\sN_\om$ (cf.~\eqref{def tebz}), $m^\om(v)$ is well-defined, satisfying 
\EQ{ \label{est mom}
 |m^\om(v)| \lec \|P^\om_\ce v\|_2+\|v\|_{H^1}^2,} 
since $\LR{g^\om_\pm|Q_\om}=0$.  
Plugging it into \eqref{eq v0} yields an autonomous equation of $v$ 
\EQ{ \label{eq v}
 \pt \dot v = i\cL^\om v -m^\om(v)iQ_\om + \cN^\om(v),
 \pq \cN^\om(v):=-i(m^\om(v)v+ N^\om(v)).}
It can be rewritten in the local coordinate of $\sC_\om$. Denoting
\EQ{ \label{def cNom*}
 \pq \cN^\om_*(v):=P^\om_* \cN^\om(v)}
for $*=\pm,1,2,\ce$, we obtain the following equations for each $\lb_*=P^\om_* v$
\EQ{ \label{eq la}
 \pt \dot\lb_\pm = \pm\al_\om \lb_\pm + \cN^\om_\pm(v), 
 \pq \CAS{ \dot \lb_1 = \al_\om \lb_2 + \cN^\om_1(v),\\ \dot \lb_2 = \al_\om \lb_1 + \cN^\om_2(v),}}
as well as for $\z=P^\om_\ce v$
\EQ{
 \dot\z = i\cL^\om \z - m^\om(v)iQ_\om + \cN^\om_\ce(v).}

\subsection{Energy distance function}
In view of the expansion \eqref{exp Aom2}, the linearized energy norm $\|v\|_\om$ is better suited than $d_{0,\om}$ to measure the distance from $\Soli_1$ to solutions $u$.  
In order to avoid the regularity loss from the higher order term $C^\om$, we can either include it into the distance, or mollify the distance in time. 
Here we take the latter option for a simpler (convex) dynamics of the (square) distance. 

Fix a radial decreasing function $\chi\in C_0^\I(\R)$ satisfying  
\EQ{ \label{def chi}
 \chi(t)=\CAS{1 &|t|\le 1, \\ 0 &|t|\ge 2.}}
For any $\fy\in\sN_\om$, let $u_\om$ be the solution of \eqref{rscNLS} with $u_\om(0)=\fy$. 
Decomposing $u_\om=e^{i\te}(Q_\om+v)$ as above, a local distance function $d_{1,\om}$ is defined at $\fy$ by 
\EQ{ \label{def d1om}
 d_{1,\om}(\fy)^2:=\int_\R \chi(-t)\|v(t)\|^2_\om dt.}
The local wellposedness for \eqref{eq v} in $v\in H^1(\R^3)$, which is uniform in $\om$, yields 
\begin{lem} \label{lem:LWP}
There are constants $\de_E\in(0,\de_D/2]$ and $C\in[1,\I)$ such that for any $\om\ge\om_*$ and any $\fy\in H^1_r(\R^3)$ with $d_{0,\om}(\fy)\le2\de_E$, the solution $u_\om$ of \eqref{rscNLS} with the initial condition $u_\om(0)=\fy$ exists at least for $|t|\le 2$, satisfying 
\EQ{ \label{time equivalence}
 |t|\le 2\implies C^{-1}d_{0,\om}(u_\om(0)) \le d_{0,\om}(u_\om(t)) \le C d_{0,\om}(u_\om(0)).}
\end{lem}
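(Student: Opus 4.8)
The plan is to obtain Lemma~\ref{lem:LWP} from the $H^1$ local wellposedness for the rescaled equation \eqref{rscNLS}, made uniform in $\om\ge\om_*$, combined with a continuity argument that propagates closeness to the orbit $\cQ_\om$ over the fixed time $|t|\le2$ and a Gronwall estimate for the modulation equation \eqref{eq v} that controls $d_{0,\om}$ along the flow.

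First I would record the uniform local theory. Since $V$ is fixed with $V=W_2+W_\I$, the rescaled potential obeys $\|V^\om\|_2\lec\om^{-1/4}$ and $\|V^\om\|_\I\lec\om^{-1}$, so $V^\om\to0$ in $L^2\cap L^\I$ as $\om\to\I$; placed in a dual Strichartz norm the term $V^\om u_\om$ contributes with an $\om$-independent (indeed vanishing) constant, while the cubic term $|u_\om|^2u_\om$ is itself $\om$-free. The standard contraction in $\Stz^1$ therefore yields, uniformly in $\om\ge\om_*$, a unique $H^1$ solution of \eqref{rscNLS} on a maximal interval, continuing as long as $\|u_\om(t)\|_{H^1}$ stays bounded. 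Because $\|Q_\om\|_{H^1}=\|Q\|_{H^1}+O(\om^{-1/4})$ by Lemma~\ref{lem:sum} and $d_{0,\om}(\fy)\le2\de_E\le\de_D$, the hypothesis forces $\|\fy\|_{H^1}\le M_0$ for some fixed $M_0$, so $u_\om$ exists at least on a fixed interval around $0$.

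To reach the fixed time $2$ and obtain the distance bound I would run a continuity argument on the largest interval $I\ni0$ on which $u_\om$ exists and $d_{0,\om}(u_\om(t))\le\de_D$, i.e.\ $u_\om(t)\in\sN_\om$. On $I$ the modulated decomposition $u_\om=e^{i\te}(Q_\om+v)$ of \eqref{choice te} is valid, with $\te(t)$ determined continuously by the orthogonality there as long as $\LR{u_\om(t)|Q_\om'}>0$, and $\|v(t)\|_{H^1}\sim d_{0,\om}(u_\om(t))\le C\de_D$; hence $\|u_\om(t)\|_{H^1}\le M_0$ on $I$ and no blow-up occurs there. For $v$ I would use \eqref{eq v} in Duhamel form against $e^{it\De}$: the non-free part of $i\cL^\om$ is multiplication by $V^\om$ and by multiples of $Q_\om^2$ (the latter bounded uniformly in $\om$ by Lemma~\ref{lem:sum}), the drift $-m^\om(v)iQ_\om$ is $O(\|v\|_{H^1})$ by \eqref{est mom}, and $\cN^\om(v)=-i(m^\om(v)v+N^\om(v))=O(\|v\|_{H^1}^2)$ by \eqref{def Nom}; a Strichartz estimate followed by Gronwall then gives $\|v(t)\|_{H^1}\le C_*e^{C_*|t|}\|v(0)\|_{H^1}$ on $I$, with $C_*$ uniform in $\om$. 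Choosing $\de_E$ so small that $C''C_*e^{2C_*}(2\de_E)<\de_D$, where $C''$ converts $\|v\|_{H^1}$ back into $d_{0,\om}$, we get $d_{0,\om}(u_\om(t))<\de_D$ strictly on $I\cap[-2,2]$; together with the absence of blow-up this forces $[-2,2]\subset I$. Running the same Gronwall estimate in reverse time and absorbing the now-negligible quadratic corrections (valid since $\|v\|_{H^1}\lec\de_E$) gives $\|v(t)\|_{H^1}\sim\|v(0)\|_{H^1}$ for $|t|\le2$, hence $C^{-1}d_{0,\om}(u_\om(0))\le d_{0,\om}(u_\om(t))\le Cd_{0,\om}(u_\om(0))$ through \eqref{choice te}.

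The step I expect to be the main obstacle is the $\om$-uniformity of all constants. The equation is $\om$-independent apart from $V^\om$ and the soliton-dependent coefficients $Q_\om,Q_\om',g^\om_\pm,\al_\om$ of the linearized flow, all of which converge to their $\om=\I$ counterparts by Lemma~\ref{lem:sum}; the one delicate point is that $V^\om$ must be treated as an inhomogeneous term in a dual Strichartz norm rather than as an $H^1$-bounded multiplier (which it is not), so that it is the smallness $\|V^\om\|_2\lec\om^{-1/4}$ — not mere boundedness — that is used. The second, milder, difficulty is that existence is needed up to the fixed time $|t|=2$ rather than only for a short time, which is precisely why the bootstrap on $I$ is needed and why $\de_E$ must be chosen small relative to the accumulated Gronwall factor $e^{O(1)}$ over $[-2,2]$, not merely relative to $\de_D$. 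The remaining ingredients — the uniform equivalence $\|v\|_{H^1}\sim d_{0,\om}$ and the well-definedness of $\te(t)$ while $u_\om(t)\in\sN_\om$ — are already furnished by Lemma~\ref{lem:sum}.
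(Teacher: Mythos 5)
Your proposal is correct and follows essentially the same route as the paper, which simply invokes the $\om$-uniform local wellposedness theory for the modulation equation \eqref{eq v} (equivalently for \eqref{rscNLS} near $\cQ_\om$) to get existence on $|t|\le 2$ and the two-sided comparability of $d_{0,\om}$; your bootstrap on $\sN_\om$ plus the Gronwall/Strichartz estimate for $v$, with $\de_E$ chosen small against the accumulated factor over $[-2,2]$ and the reversed-time estimate for the lower bound, is exactly the content the paper leaves implicit. The uniformity in $\om$ via the smallness of $V^\om$ and the convergence $Q_\om\to Q$, $\cL^\om\to\cL$ from Lemma \ref{lem:sum} is also the intended mechanism.
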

Hence $d_{1,\om}$ is well-defined for $d_{0,\om}(\fy)\le2\de_E$ and uniformly equivalent to $d_{0,\om}$. 
Then a global distance function $d_\om:H^1_r\to[0,\I)$ is defined by 
\EQ{ \label{def dom}
 d_\om(\fy):=\chi(d_{0,\om}(\fy)/\de_E)d_{1,\om}(\fy)+(1-\chi(d_{0,\om}(\fy)/\de_E))d_{0,\om}(\fy).}
$d_\om:H^1_r\to[0,\I)$ satisfies $d_\om(\fy)\sim d_{0,\om}(\fy)$ uniformly and 
\EQ{ \label{d 2 d1}
 d_{0,\om}(\fy)\le\de_E \implies d_\om(\fy)=d_{1,\om}(\fy).}

\subsection{Instability and ejection}
The crucial property of the dynamics around $\Soli_1$ is that the rescaled solution $u_\om$ can get away from $Q_\om$ only by growing instability. More precisely, we have
\begin{lem} \label{lem:inst}
There are constants $c_X\in(0,1)$ and $\de_I\in(0,\de_E]$ such that for any $\om\ge\om_*$ and any $\fy\in H^1_r(\R^3)$ we have uniformly
\EQ{ \label{instdom}
 \SAC{\bA^\om(\fy)-\bA^\om(Q_\om) \le c_X d_\om(\fy)^2 \\
  \tand d_{\om}(\fy)\le\de_I}
 \implies d_\om(\fy)=d_{1,\om}(\fy)\sim|\sB^\om_1(\fy)|.}
\end{lem}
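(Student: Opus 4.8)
The plan is to read the conclusion off the energy expansion \eqref{exp Aom2}, the coercivity \eqref{energy equiv} of $\cL^\om$ on $\cZ_\om$, and the uniform equivalence $d_{1,\om}\sim d_{0,\om}$ that follows from Lemma \ref{lem:LWP}. Since $d_\om\sim d_{0,\om}$ uniformly, the first step is to fix $\de_I\le\de_E$ small enough that $d_\om(\fy)\le\de_I$ forces $d_{0,\om}(\fy)\le\de_E$; then \eqref{d 2 d1} already gives $d_\om(\fy)=d_{1,\om}(\fy)$, and the task reduces to proving $d_{1,\om}(\fy)\sim|\sB^\om_1(\fy)|$.

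Writing $\fy=e^{i\te}(Q_\om+v)$ as in \eqref{choice te}, so that $v\in\V_\om$, and setting $b_1:=\sB^\om_1(\fy)=P^\om_1v$ and $\z:=P^\om_\ce v\in\cZ_\om$, one gets from \eqref{def normom} together with $\LR{iQ_\om'|v}=0$:
\[ \|v\|^2_\om=\frac12 b_1^2+\mathcal{E},\pq \mathcal{E}:=\frac12(\sB^\om_2(\fy))^2+\frac12\LR{\cL^\om\z|\z}\ge0, \]
where $\mathcal{E}\ge0$ because $\z\in\cZ_\om$ makes the two correction quadratic forms in \eqref{energy equiv} vanish, so $\LR{\cL^\om\z|\z}\gec\|\z\|_{H^1}^2\ge0$. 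Substituting into \eqref{exp Aom2} yields
\[ \bA^\om(\fy)-\bA^\om(Q_\om)=-\frac12 b_1^2+\mathcal{E}-C^\om(v),\pq |C^\om(v)|\lec\|v\|_{H^1}^3\lec\|v\|_\om^3, \]
by \eqref{def Com} and the norm equivalence in Lemma \ref{lem:sum}. Moreover $d_{1,\om}(\fy)\sim d_{0,\om}(\fy)\sim\|v\|_{H^1}\sim\|v\|_\om$ by Lemma \ref{lem:LWP} and \eqref{choice te}, hence $d_{1,\om}(\fy)^2\sim\frac12 b_1^2+\mathcal{E}$; in particular $d_{1,\om}(\fy)^2\gec b_1^2$, and all that remains is the reverse bound $\mathcal{E}\lec b_1^2$.

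For that I would insert the hypothesis $\bA^\om(\fy)-\bA^\om(Q_\om)\le c_X d_{1,\om}(\fy)^2$ into the displayed expansion, and use $\|v\|_\om\lec d_\om(\fy)\le\de_I$ to bound $|C^\om(v)|\lec\|v\|_\om^3\lec\de_I(b_1^2+\mathcal{E})$, together with $d_{1,\om}(\fy)^2\lec b_1^2+\mathcal{E}$, which gives
\[ \mathcal{E}-\frac12 b_1^2\lec(c_X+\de_I)(b_1^2+\mathcal{E}). \]
Choosing $c_X$ and $\de_I$ small enough that the right-hand coefficient lies below $1/2$, this rearranges to $\mathcal{E}\lec b_1^2$; hence $\|v\|^2_\om\lec b_1^2$ and $d_{1,\om}(\fy)^2\sim b_1^2$, which finishes the argument.

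The only delicate point is the order of constant selection: $\de_I$ must first be chosen small relative to $\de_E$ and to the implicit constants of Lemma \ref{lem:LWP} and of the norm equivalences, and only afterwards $c_X$ (with possibly one further shrinking of $\de_I$) taken small enough to absorb the cross terms. I do not anticipate a genuine analytic obstacle — the instability already resides in the negative sign of the $-\frac12 b_1^2$ term in \eqref{exp Aom2}, and the argument merely says that a near-vanishing energy surplus forces the nonnegative central-stable part $\mathcal{E}$ to be dominated by the unstable coordinate $b_1$.
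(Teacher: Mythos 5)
Your argument is correct and is essentially the paper's own proof: reduce to $d_\om=d_{1,\om}$ via \eqref{d 2 d1}, then feed the hypothesis and the cubic bound $|C^\om(v)|\lec\de_I\|v\|_\om^2$ into the expansion \eqref{exp Aom2} and absorb, concluding $\|v\|_\om^2\lec\lb_1^2$. Your explicit isolation of the nonnegative part $\mathcal{E}$ (via the coercivity \eqref{energy equiv} on $\cZ_\om$) is just a rephrasing of the fact that $\|\cdot\|_\om$ is a norm, which the paper uses implicitly, so there is no substantive difference.
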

\begin{proof}
Since $d_\om\sim d_{0,\om}$ and \eqref{d 2 d1}, choosing $\de_I$ small ensures that $d_\om(\fy)=d_{1,\om}(\fy)$ for $d_\om(\fy)\le\de_I$.  
Then by the definition of $d_{1,\om}$ and equivalence of distance functions, 
\EQ{
 \|v\|^2_\om=\bA^\om(\fy)-\bA^\om(Q_\om)+\lb_1^2-C^\om(v)
 \lec (c_X+\de_I) \|v\|^2_\om + \lb_1^2,}
where $v\in\V_\om$ is determined from $\fy$ by \eqref{choice te} as before. 
Choosing $c_X$ and $\de_I$ small enough, we obtain 
$d_\om(\fy)^2\sim \|v\|^2_\om\lec\lb_1^2$. 
\end{proof}

Next we investigate the evolution of $d_\om$. 
For any solution $u_\om$ of \eqref{rscNLS} close to $Q_\om$,  
\EQ{
 d_{1,\om}(u_\om)^2 = \chi*\|v\|^2_\om,}
where $*$ denotes the convolution in $t$, and $u_\om=e^{i\te}(Q_\om+v)$ with the orthogonality $v\in\V_\om$ as before. 
Then using the equation \eqref{eq la} for $\lb_j$, \eqref{exp Aom2} and conservation of $\bA^\om(u_\om)$, we derive 
\EQ{
 \pt \p_td_{1,\om}(u_\om)^2
  =\chi*2\al_\om[\lb_1\lb_2+O(\|v\|_{H^1}^3)]+\chi'*O(\|v\|_{H^1}^3), 
 \pr \p_t^2d_{1,\om}(u_\om)^2
  =\chi*2\al_\om^2[\lb_1^2+\lb_2^2]+\sum_{j=0}^2 \chi^{(j)}*O(\|v\|_{H^1}^3).}
Note that we can not differentiate the cubic terms, that is the reason for the mollifier. 
If $u_\om(t)$ is in the instability dominance \eqref{instdom}, then 
\EQ{ \label{eq d2}
 \pt \p_td_\om(u_\om)^2
  =\chi*2\al_\om[\lb_1\lb_2]+O(\lb_1^3), 
 \pr \p_t^2d_\om(u_\om)^2
  =\chi*2\al_\om^2[\lb_1^2+\lb_2^2]+O(\lb_1^3)
 \sim \lb_1^2,}
using the equivalence \eqref{time equivalence} as well, and assuming if necessary that $d_\om(u_\om)$ is even smaller. 
The last equation implies the convexity of $d_{1,\om}^2$, from which we deduce that if $u_\om$ satisfies at some time $t=t_0$, 
\EQ{ \label{eject0}
 \pt d_\om(u_\om) < \de_X,  
 \pq \bA^\om(u_\om)-\bA^\om(Q_\om) \le c_X d_\om(u_\om)^2, 
  \pq \p_t d_{\om}(u_\om)\ge 0,}
for some small $\de_X\le\de_I$, 
then $d_\om(u_\om)$ will keep growing for $t\ge t_0$ until the first condition is violated. 
$d_\om(u_\om)\sim|\lb_1|$ implies that $\si:=\sign\lb_1(t)\in\{\pm 1\}$ is fixed during that time. 

Since $0\le \p_t d_\om(u_\om)=\p_t d_{1,\om}(u_\om)$, the first equation in \eqref{eq d2} implies that $\lb_1\lb_2(t_1)\gec-|\lb_1(t_1)|^3$ at some $t_1\in(t_0-2,t_0+2)$, then (taking $\de_X$ small), $\si\lb_+(t_1)\ge|\lb_1(t_1)|/3$ and $\si\lb_-(t_1)\ge 0$. 
Because $\p_t(\lb_1\lb_2)\ge 0$ and $\lb_1\sim\lb_1(t_0)$ for $|t-t_0|<2$, we may choose $t_1>t_0$. 

Let $R:=|\lb_1(t_1)|$ and suppose that on some interval $[t_1,t_2]$ we have \eqref{eject0} and 
\EQ{
 |\lb_1| \le 2Re^{\al_\om(t-t_1)}.}
Then the equations of $\lb_\pm$ in \eqref{eq la} imply 
\EQ{
 |\lb_\pm - e^{\pm\al_\om(t-t_1)}\lb_\pm(t_1)| \lec (Re^{\al_\om(t-t_1)})^2 \lec \de_X Re^{\al_\om(t-t_1)},}
and so, taking $\de_X$ smaller if necessary, 
\EQ{
 |\lb_1|=\si(\lb_++\lb_-)\CAS{ \le Re^{\al_\om(t-t_1)}(1+C\de_X)<2Re^{\al_\om(t-t_1)}, \\ \ge Re^{\al_\om(t-t_1)}(1/3-C\de_X)>Re^{\al_\om(t-t_1)}/4.}}
Therefore $t_2$ can be increased until $d_\om(u_\om)$ reaches $\de_X$ at some $t=t_X>t_0$, and for $t_0\le t\le t_X$, we have
\EQ{
 d_\om(u_\om) \sim \si \lb_1(t) \sim \si \lb_1(t_0)e^{\al_\om(t-t_0)},}
for a time-independent sign $\si\in\{\pm 1\}$, 
while the equations of $\lb_\pm$ imply 
\EQ{
 \lb_\pm(t) = e^{\pm\al_\om(t-t_0)}\lb_\pm(t_0)+O(\lb_1^2).}
To estimate $\z$, consider the energy projected onto the eigenmodes
\EQ{
 \bA^\om(\lb g^\om)=\frac{1}{2}\BR{-\lb_1^2+\lb_2^2} - C^\om(\lb g^\om),\pq \lb g^\om:=\lb_1g^\om_1+i\lb_2g^\om_2.} 
Using the equations of $\lb$, we have, for $t_0<t<t_X$, 
\EQ{
 \pn \p_t\bA^\om(\lb g^\om)
 \pt=-\lb_1(\al_\om\lb_2+\cN^\om_1(v))+\lb_2(\al_\om\lb_1+\cN^\om_2(v))-\LR{N^\om(\lb g^\om)|\dot\lb g^\om}
 \pr=\al_\om m^\om(v)[\lb_1\LR{v_2|g^\om_2}+\lb_2\LR{v_1|g^\om_1}]
 \pn=O(\|\z\|_\om\|v\|^2_\om+\|v\|_\om^4),}
where \eqref{est mom} is used. On the other hand 
\EQ{
 \bA^\om(u_\om)-\bA^\om(Q_\om)-\bA^\om(\lb g^\om)\pt=\frac 12\LR{\cL^\om\z|\z}-C^\om(v)+C^\om(\lb g^\om)
 \pr= \|\z\|^2_\om + O(\|\z\|_\om\|v\|^2_\om).}
Hence integrating its time derivative in $[t_0,t_X]$ leads to  
\EQ{
 \|\z\|_{L^\I_tH^1}^2 \lec \|\z(t_0)\|_{H^1}^2 + |\lb_1(t_0)|^4 + \|\lb_1\|_{L^4_t}^4,}
so, using the exponential growth of $\lb_1$, 
\EQ{
 \|\z(t)\|_{H^1} \lec \|\z(t_0)\|_{H^1} + |\lb_1(t)|^2.}
Near $t=t_X$, we can also determine the sign and size of 
\EQ{
 \bK^\om_2(u_\om)\pt=\LR{(\bK^\om_2)'(Q_\om)|v}+O(\|v\|_{H^1}^2)
 \pr=\lb_1\LR{(\bK^\om_2)'(Q_\om)|g^\om_1}+O(\|\z_1\|_\om+\|v\|^2_\om).}
\eqref{Qg2 pos} implies that for some constant $\CK>0$ 
\EQ{
 \si\bK^\om_2(u_\om)+\CK\|\z(t_0)\|_{H^1} \sim \si\lb_1 = |\lb_1|}
on $t_0\le t\le t_X$. 
Thus we have obtained the following. 
\begin{lem}[Ejection Lemma] \label{lem:eject}
There are constants $\CK\in(0,\I)$ and $\de_X\in(0,\de_I]$ such that for any $\om\ge\om_*$ and any solution $u_\om$ of \eqref{rscNLS} satisfying the three conditions in \eqref{eject0} at some $t=t_0\in\R$, 
there exists $t_X\in(t_0,\I)$ such that $d_\om(u_\om(t_X))=\de_X$, and for $t_0<t<t_X$, $d_\om(u_\om(t))$ is strictly increasing, 
\EQ{ \label{ejection}
 d_\om(u_\om(t)) \pt\sim \si\sB^\om_1(u_\om(t)) \sim \si\sB^\om_+(u_\om(t)) \sim \si\sB^\om_1(u_\om(t_0))e^{\al_\om(t-t_0)} 
 \pr\sim \si\bK^\om_2(u_\om(t))+\CK\|\sZ^\om(u_\om(t_0))\|_{H^1},}
and $\sB^\om_\pm(u_\om(t)) = e^{\pm\al_\om(t-t_0)}\sB^\om_\pm(u_\om(t_0))+O(d_\om(u_\om(t))^2)$, 
for some $\si\in\{\pm 1\}$ independent of $t\in(t_0,t_X)$. 
\end{lem}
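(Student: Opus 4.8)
The plan is a convexity-plus-bootstrap argument on the time-mollified squared distance $d_{1,\om}(u_\om)^2=\chi*\|v\|_\om^2$, tracking the sign of the unstable coordinate $\lb_1=P^\om_1 v$ throughout. First I would note that, shrinking $\de_X$ if necessary, the hypotheses \eqref{eject0} place $u_\om$ at $t_0$ in the instability-dominance regime of Lemma \ref{lem:inst}, so $d_\om=d_{1,\om}\sim|\lb_1|$ there; and since $\bA^\om(u_\om)$ is conserved while $d_\om$ only increases along the ejection, the inequality $\bA^\om(u_\om)-\bA^\om(Q_\om)\le c_X d_\om(u_\om)^2$ is self-improving, so \eqref{instdom} persists and $d_\om=d_{1,\om}\sim|\lb_1|$ on the whole interval where $d_\om<\de_X$. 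Differentiating $d_{1,\om}^2$ twice with the $\lb_{1,2}$ equations \eqref{eq la}, the expansion \eqref{exp Aom2}, and conservation of $\bA^\om$ gives, as in \eqref{eq d2}, $\p_t d_\om^2=\chi*2\al_\om[\lb_1\lb_2]+O(\lb_1^3)$ and $\p_t^2 d_\om^2=\chi*2\al_\om^2[\lb_1^2+\lb_2^2]+O(\lb_1^3)\sim\lb_1^2>0$. Convexity then forces $d_\om$ to be strictly increasing after $t_0$ until it first reaches $\de_X$; this defines $t_X$, and $d_\om\sim|\lb_1|>0$ keeps $\si:=\sign\lb_1$ constant on $(t_0,t_X)$.

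The second step upgrades this to an exponential. From $\p_t d_\om(u_\om)\ge 0$ at $t_0$ and the first equation in \eqref{eq d2} there is $t_1\in(t_0,t_0+2)$ with $\lb_1\lb_2(t_1)\gec-|\lb_1(t_1)|^3$ (using $\p_t(\lb_1\lb_2)\ge 0$ to push $t_1$ past $t_0$), hence for $\de_X$ small $\si\lb_+(t_1)\ge|\lb_1(t_1)|/3$ and $\si\lb_-(t_1)\ge 0$. With $R:=|\lb_1(t_1)|$ I run a continuity argument under the a priori bound $|\lb_1|\le 2Re^{\al_\om(t-t_1)}$: because $\cN^\om_\pm(v)=O(\|v\|_{H^1}^2)$ and $\|v\|_{H^1}\sim|\lb_1|$ in this regime, Duhamel for the diagonal equations $\dot\lb_\pm=\pm\al_\om\lb_\pm+\cN^\om_\pm(v)$ yields $|\lb_\pm-e^{\pm\al_\om(t-t_1)}\lb_\pm(t_1)|\lec(Re^{\al_\om(t-t_1)})^2\lec\de_X Re^{\al_\om(t-t_1)}$, which both closes the bootstrap and pins $|\lb_1|=|\si(\lb_++\lb_-)|\sim Re^{\al_\om(t-t_1)}$ from above and below, so the interval extends to $t_X$. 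Re-anchoring from $t_1$ to $t_0$ via \eqref{time equivalence} gives $d_\om(u_\om(t))\sim\si\sB^\om_1(u_\om(t))\sim\si\sB^\om_+(u_\om(t))\sim\si\sB^\om_1(u_\om(t_0))e^{\al_\om(t-t_0)}$ and the stated formula for $\sB^\om_\pm(u_\om(t))$.

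The last step controls the radiation and relates $d_\om$ to $\bK^\om_2$. Writing $\lb g^\om:=\lb_1 g^\om_1+i\lb_2 g^\om_2$, a direct computation from the equations of $\lb$ and the bound \eqref{est mom} on $m^\om$ gives $\p_t\bA^\om(\lb g^\om)=O(\|\z\|_\om\|v\|_\om^2+\|v\|_\om^4)$, while the expansion \eqref{exp Aom2} gives $\bA^\om(u_\om)-\bA^\om(Q_\om)-\bA^\om(\lb g^\om)=\|\z\|_\om^2+O(\|\z\|_\om\|v\|_\om^2)$; integrating over $[t_0,t]$ and inserting the exponential growth of $\lb_1$ yields $\|\z(t)\|_{H^1}\lec\|\z(t_0)\|_{H^1}+|\lb_1(t)|^2$. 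Finally the Taylor expansion of $\bK^\om_2$ at $Q_\om$, the decomposition \eqref{exp v}, the positivity \eqref{Qg2 pos} of $\LR{(\bK^\om_2)'(Q_\om)|g^\om_1}$, and this $\z$ bound combine to give $\si\bK^\om_2(u_\om(t))+\CK\|\sZ^\om(u_\om(t_0))\|_{H^1}\sim\si\lb_1(t)=|\lb_1(t)|$ for a suitable $\CK$; chaining the equivalences finishes the proof. The main obstacle throughout is the one-derivative loss in the cubic terms $C^\om$ and $N^\om$: it prevents differentiating the natural distance directly, forces the time-mollified $d_{1,\om}$, and so convexity must be established for the \emph{convolved} quantity with all remainders kept $O(\lb_1^3)$ and uniform in $\om\ge\om_*$ — only then can the pointwise exponential growth be read off through Lemma \ref{lem:inst}.
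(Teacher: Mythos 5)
Your proposal is correct and follows essentially the same route as the paper: the mollified squared distance with \eqref{eq d2} and convexity, the auxiliary time $t_1$ where $\si\lb_+\gec|\lb_1|$, the bootstrap on $|\lb_1|\le 2Re^{\al_\om(t-t_1)}$ closed via the diagonal equations \eqref{eq la}, the projected energy $\bA^\om(\lb g^\om)$ to bound $\|\z\|$, and the expansion of $\bK^\om_2$ with \eqref{Qg2 pos}. No gaps to report.
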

\begin{rem}
In the previous papers such as \cite{NLS}, the sign was opposite between the unstable mode and the virial functional. 
In this paper, the sign of the eigenmode is chosen to match that of virial. In other words, the sign of eigenmode is flipped from \cite{NLS}, by the choice (normalization) of $g^\om_\pm$. 
\end{rem}
Note that by the time inversion symmetry, we can and will apply the above lemma backward in time as well. 
As an immediate consequence, we can describe the behavior of solutions which are not ejected. 
In contrast to the ejected solutions, they are monotonically and exponentially attracted by a small neighborhood of $\cQ_\om$. 
\begin{lem}[Trapping Lemma] \label{lem:stay}
Let $\om\ge\om_*$ and $u_\om$ be a solution of \eqref{rscNLS} on some interval $[t_0,\I)$ satisfying 
\EQ{ \label{staying}
 \sup_{t_0<t<\I}d_\om(u_\om(t))<\de_X.}
Then there exists $t_1\in[t_0,\I]$ such that $d_\om(u_\om(t))$ is strictly decreasing on $[t_0,t_1)$ and 
\EQ{
 \CAS{t_0\le t<t_1\implies d_\om(u_\om(t)) \sim e^{-\al_\om(t-t_0)}d_\om(u_\om(t_0)), \\
 t_1<t<\I \implies c_Xd_\om(u_\om(t))^2 < \bA^\om(u_\om)-\bA^\om(Q_\om).}} 
We have $t_1=\I$ if and only if $u_\om(t)$ converges to $e^{i(a-t)}Q_\om$ strongly in $H^1(\R^3)$ as $t\to\I$ for some $a\in\R$. Moreover, in that case we have 
\EQ{
 \|u_\om(t)-e^{i(a-t)}Q_\om\|_{H^1} \sim e^{-\al_\om(t-t_0)}d_\om(u_\om(t_0)).} 
\end{lem}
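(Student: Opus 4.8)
The plan is to use the convexity/monotonicity machinery built around $d_\om$ in the same spirit as the Ejection Lemma, but now running it to show that a solution confined to the small ball $d_\om(u_\om)<\de_X$ must decay exponentially until its energy excess dominates, and that the borderline case $t_1=\I$ forces convergence to the soliton orbit. The first step is to observe that, as long as $d_\om(u_\om(t))<\de_I$ and the instability dominance \eqref{instdom} holds at $t$, the second-derivative estimate $\p_t^2 d_\om(u_\om)^2\sim\lb_1^2\gec 0$ from \eqref{eq d2} says $d_{1,\om}^2$ is convex there; combined with the a~priori bound $d_\om(u_\om)<\de_X$ for all $t\ge t_0$, a bounded convex function on $[t_0,\I)$ is nonincreasing, so $\p_t d_\om(u_\om)\le 0$ on the set where \eqref{instdom} holds. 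Define $t_1$ as the first time the instability dominance \eqref{instdom} fails, i.e. the first $t$ with $\bA^\om(u_\om)-\bA^\om(Q_\om)>c_X d_\om(u_\om)^2$; for $t_0\le t<t_1$ we are in the regime of Lemma~\ref{lem:inst}, so $d_\om(u_\om)\sim|\lb_1|$ and $d_\om(u_\om)=d_{1,\om}(u_\om)$.

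The second step is the exponential decay on $[t_0,t_1)$. On this interval $\si:=\sign\lb_1$ is constant (since $d_\om\sim|\lb_1|$ stays away from $0$ on compact subintervals and the monotone decrease of $d_\om$ means $\lb_1$ cannot pass through $0$), and the sign must be such that $\si\lb_2\le 0$ near every point — otherwise $\p_t(\lb_1\lb_2)\ge 0$ together with the first line of \eqref{eq d2} would force $d_\om$ to grow, contradicting monotone decrease. More precisely, from $\p_t d_{1,\om}^2=\chi*2\al_\om\lb_1\lb_2+O(\lb_1^3)\le 0$ and the convexity we extract, after taking $\de_X$ small, that $\si\lb_-(t)\gec|\lb_1(t)|/3$ and $\si\lb_+(t)$ is controlled; feeding this back into the linear part of \eqref{eq la} for $\lb_\pm$ and absorbing the quadratic remainder $O(\lb_1^2)=O(\de_X|\lb_1|)$ yields $|\lb_1(t)|\sim|\lb_1(t_0)|e^{-\al_\om(t-t_0)}$, hence $d_\om(u_\om(t))\sim e^{-\al_\om(t-t_0)}d_\om(u_\om(t_0))$. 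This is the mirror image of the ejection estimate run backward, using the stable eigenmode instead of the unstable one, and the uniformity in $\om$ is inherited from Lemma~\ref{lem:inst} and \eqref{eq d2}.

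The third step handles the dichotomy at $t_1$. If $t_1<\I$, the definition gives the claimed inequality $c_X d_\om(u_\om(t))^2<\bA^\om(u_\om)-\bA^\om(Q_\om)$ for $t>t_1$ by continuity (one checks that once \eqref{instdom} fails it stays failed on $(t_1,\I)$ as long as $d_\om<\de_X$: if it held again at some later $t_2$ then applying the Ejection Lemma backward from a point just after where $\p_t d_\om\ge0$ recurs would push $d_\om$ up to $\de_X$, contradiction — this uses the time-inversion remark after Lemma~\ref{lem:eject}). If $t_1=\I$, then $d_\om(u_\om(t))\to 0$ exponentially; writing $u_\om(t)=e^{i\te(t)}(Q_\om+v(t))$ with $\|v(t)\|_{H^1}\sim d_{0,\om}(u_\om(t))\to 0$ and using $\dot\te+1=m^\om(v)=O(\|v\|_{H^1})$ from \eqref{est mom}, the phase $\te(t)+t$ is Cauchy (its derivative is exponentially integrable), so $\te(t)+t\to a-$ wait, more precisely $\te(t)=-t+a+o(1)$ for some $a\in\R$, and then $u_\om(t)-e^{i(a-t)}Q_\om\to 0$ in $H^1$ with the same exponential rate, giving the final displayed estimate. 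Conversely, if $u_\om(t)\to e^{i(a-t)}Q_\om$ in $H^1$ then $d_\om(u_\om(t))\to 0$, which is incompatible with $t_1<\I$ since there $d_\om$ is bounded below by $\sqrt{(\bA^\om(u_\om)-\bA^\om(Q_\om))/c_X}$ once we know $\bA^\om(u_\om)>\bA^\om(Q_\om)$ — and the strict inequality $\bA^\om(u_\om)>\bA^\om(Q_\om)$ holds in the $t_1<\I$ case because at $t_1$ the energy excess equals $c_X d_\om^2>0$ unless $d_\om(u_\om(t_1))=0$, which only happens at the soliton itself where the orbit is constant and $t_1=\I$ trivially.

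\medskip

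\textbf{Main obstacle.} I expect the delicate point to be the rigorous bookkeeping at and beyond $t_1$: ruling out that the solution re-enters the instability-dominance region \eqref{instdom} after leaving it, and thereby showing $d_\om$ genuinely ceases to be monotone in a controlled way rather than oscillating. This requires combining the convexity estimate \eqref{eq d2}, the sign analysis of $\lb_1$, and a backward application of the Ejection Lemma, and one must be careful that the mollification in the definition of $d_{1,\om}$ (the convolution with $\chi$) does not spoil the pointwise-in-time conclusions near the transition time $t_1$ — in particular the window $(t_0-2,t_0+2)$ on which the ejection argument locates $t_1$ must be shown to sit inside the region where the solution is defined and close to $\cQ_\om$, which follows from Lemma~\ref{lem:LWP} and \eqref{time equivalence}.
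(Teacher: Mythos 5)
Your overall architecture is the paper's: the trichotomy ``either $\p_t d<0$ or $c_Xd^2<\bA^\om(u_\om)-\bA^\om(Q_\om)$'' forced by the Ejection Lemma, the definition of $t_1$ as the first failure of instability dominance, and the modulation equation \eqref{eq te} for the phase. The genuine gap is in your second step, the exponential decay on $[t_0,t_1)$. You run the ODEs \eqref{eq la} forward in time and ``absorb the quadratic remainder $O(\lb_1^2)=O(\de_X|\lb_1|)$''. This absorption is harmless for $\lb_-$, but for the unstable mode the Duhamel contribution of the quadratic source is $\int_{t_0}^t e^{\al_\om(t-s)}O(\lb_1(s)^2)\,ds=O\bigl(d(t_0)^2e^{\al_\om(t-t_0)}\bigr)$, which is \emph{not} $O(\de_X|\lb_1(t)|)$ once $t-t_0\gtrsim \al_\om^{-1}\log(1/d(t_0))$: controlling $\lb_+$ forward requires a stable-manifold-type integration from the future using the trapping constraint, and even for $\lb_-$ a pointwise Gronwall absorption only yields a perturbed rate $\al_\om(1\pm C\de_X)$, not the claimed $d(t)\sim e^{-\al_\om(t-t_0)}d(t_0)$ with constants uniform in $t$. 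The paper sidesteps all of this by applying the already-proven Lemma \ref{lem:eject} \emph{backward in time from each} $t\in(t_0,t_1)$: there $d(t)<\de_X$, $\bA^\om(u_\om)-\bA^\om(Q_\om)\le c_Xd(t)^2$ and $\p_t d\le 0$, so the backward ejection gives $d(s)\sim e^{\al_\om(t-s)}d(t)$ all the way down to $s=t_0$ (the orbit cannot reach $\de_X$ on $(t_0,t)$ by \eqref{staying}), which is exactly the two-sided bound with the sharp rate $\al_\om$. Your own remark that the estimate is ``the mirror image of the ejection estimate run backward'' is the right idea, but the bootstrap must actually be run in the direction of growth; the forward version you wrote down is the step that would fail.

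A second, smaller error is in the converse direction of the equivalence. You claim that for $t>t_1$ the dichotomy bounds $d_\om$ \emph{below} by $\sqrt{(\bA^\om(u_\om)-\bA^\om(Q_\om))/c_X}$; the inequality goes the other way ($c_Xd^2<\bA^\om(u_\om)-\bA^\om(Q_\om)$ is an upper bound on $d$), so no contradiction with $d\to0$ follows from it. The correct one-line argument, which is the paper's, uses conservation: if $u_\om(t)\to e^{i(a-t)}Q_\om$ strongly in $H^1$, then $\bA^\om(u_\om)=\bA^\om(Q_\om)$, so the condition $c_Xd(t)^2<\bA^\om(u_\om)-\bA^\om(Q_\om)$ can never hold and $t_1=\I$ by its definition; your observation that $t_1<\I$ forces $\bA^\om(u_\om)>\bA^\om(Q_\om)$ is correct but should be combined with this conservation argument rather than with a lower bound on $d_\om$ that the dichotomy does not provide.
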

\begin{proof}
Abbreviate $d(t):=d_\om(u_\om(t))$. 
The ejection lemma \ref{lem:eject} implies that if \eqref{eject0} holds at any $t\in[t_0,\I)$ then $d(t)$ grows at least to $\de_X$, violating the first condition of \eqref{staying}. 
Since $d(t)<\de_X$ for all $t\ge t_0$, the latter two conditions of \eqref{eject0} should never hold, in other words
\EQ{
 \p_t d(t)<0 \pq \text{ or } \pq c_Xd(t)^2 < \bA^\om(u_\om)-\bA^\om(Q_\om).}
Then $t_1:=\inf\{t\ge t_0\mid c_Xd(t)^2<\bA^\om(u_\om)-\bA^\om(Q_\om)\}$ satisfies the desired properties. 
The exponential decay follows from Lemma \ref{lem:eject} applied backward in time 
\EQ{
 d(t_0) \sim e^{\al_\om(t_0-t)}d(t)}
for $t_0<t<t_1$. If $t_1=\I$ then $d(t)\sim e^{-\al_\om(t-t_0)}d(t_0)\to 0$ as $t\to\I$, hence exponential convergence to the set $\cQ_\om$. 
Conversely, if $u_\om$ is strongly convergent, then $\bA^\om(u_\om)=\bA^\om(Q_\om)$, which forces $t_1=\I$. 
Then the modulation equation \eqref{eq te} yields 
$|\dot\te+1|\lec \|v\|_{H^1}\lec e^{-\al_\om(t-t_0)}\de$, 
and by integration in $t$, the same bound for $|\te-(a-t)|$ for some $a\in\R$. 
\end{proof}

Under an energy constraint 
$\bA^\om(u_\om)<\bA^\om(Q_\om)+c_X\de^2$ 
for some $\de\in(0,\de_X)$, every solution $u_\om$ satisfying \eqref{staying} comes closer to $\cQ_\om$, namely 
\EQ{
 t_1<t<\I \implies d_\om(u_\om(t))<\de<\de_X.}
This distance gap between the ejection and the trapping is a key property of the instability dynamics. 

\section{Static analysis away from the first excited state}
When the solution is away from the first excited states, the above linearization is useless. Instead, we rely on energy-type, variational and topological arguments. 
\subsection{Variational estimate away from the solitons}
We have sign-definiteness of the virial $\bK^\om_2$ away from the solitons. 
\begin{lem} \label{lem:var}
There exist continuous, positive and increasing functions $\e_V$ and $\ka_V$ from $(0,\I)$ to $(0,1)$ with the following property. 
Let $\om\ge\om_*$ and $\fy\in H^1_r(\R^3)$ satisfy the following three conditions: 
\EQ{ \label{var cond}
 d_\om(\fy)\ge\de,\pq \bA^\om(\fy)<\bA^\om(Q_\om)+\e_V(\de)^2, 
 \pq \bM+\om\bH^0(\fy)>\CM.}
Then we have one of the following \rm{(a)-(b)}.
\ENA{
\item $|\bK^\om_2(\fy)|\ge\ka_V(\de)$.
\item $\bH^0(\fy)\le \CD\bK^\om_2(\fy)\le \CD^2\bH^0(\fy)$ and $\bE^\om(\fy)<\de_U/2$, where 
\EQ{
 \de_U:= \inf_{\om\ge\om_*}\bE^\om(Q_\om)>0.}
}
\end{lem}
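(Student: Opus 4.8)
The plan is to argue by contradiction through a compactness/concentration argument against the static equation \eqref{rsNLS}, the same way constrained minimizers were characterized in Lemmas \ref{lem:excited energy} and \ref{lem:minid}. Suppose the statement fails. Then there are $\de>0$, a sequence $\om_n\ge\om_*$ and $\fy_n\in H^1_r$ satisfying \eqref{var cond} with $\e_V(\de)\to 0$, but for which neither (a) nor (b) holds, i.e.\ $|\bK^\om_2(\fy_n)|\to 0$ (this already follows once we rule out the case-split structure, but more precisely we assume $|\bK^{\om_n}_2(\fy_n)|<\ka_n\to 0$ and we must derive alternative (b)), yet $\fy_n$ is not in the regime of (b). Since $\bM+\om\bH^0(\fy_n)>\CM$, Lemma \ref{lem:minid}(1) together with $\bA^\om(\fy_n)<\bA^\om(Q_\om)+\e_V(\de)^2 \le \frac{11}{10}\bA(Q)+o(1) < 2\bA(Q)$ (using \eqref{Qom close}) excludes the dichotomy case \eqref{dich1} for large $n$: indeed \eqref{AK bd H1} and $\bK^\om_2(\fy_n)\to 0$ bound $\|\fy_n\|_{H^1}$, hence $\bM(\fy_n)<2\bA(Q)<\CD^{-1}\om^{1/2}$ for $n$ large, and then \eqref{dich1} would force $(\bM+\om\bH^0)(\fy_n)<\CM$ by Lemma \ref{lem:minid}(1), a contradiction. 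So Lemma \ref{lem:dich} leaves only \eqref{dich2} or \eqref{dich3}.

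In case \eqref{dich2} holds along a subsequence, then $\bH^0(\fy_n)\sim\bK^{\om_n}_2(\fy_n)\to 0$, so $\bH^0(\fy_n)\to 0$ and hence $\bG(\fy_n)\to 0$ by Gagliardo--Nirenberg combined with $\om^{-1}\bM(\fy_n)\le\CD\bH^0(\fy_n)\to 0$; using \eqref{Vom small1} this gives $\bE^\om(\fy_n)\to 0 < \de_U/2$, and \eqref{dich2} is exactly the inequality chain in alternative (b) (up to the constant, which is $\CD$ by definition of \eqref{dich2}), so we are in case (b) — contradicting our assumption that (b) fails. It remains to handle case \eqref{dich3}. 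Here $\bG(\fy_n)\gtrsim 1$, and by \eqref{Vom small dich} the rescaled functionals $\bA^{\om_n}(\fy_n), \bH^0(\fy_n), \bG(\fy_n)$ are all comparable and bounded (the upper bound coming from $\bA^{\om_n}(\fy_n)<2\bA(Q)$ via \eqref{AK bd H1} and $\bK^{\om_n}_2(\fy_n)\to 0$). Passing to a subsequence, $\fy_n\rightharpoonup\fy_\I$ weakly in $H^1_r$, strongly in $L^4$ (radial compactness), and $\bK^{\om_n}_2(\fy_n)\to 0$ with $\bG(\fy_n)\to\bG(\fy_\I)$. By Lemma \ref{lem:dich} (last sentence) the limit is again in case \eqref{dich3}, so $\fy_\I\ne 0$. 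The key point: the weak limit satisfies $\bK_2^\I(\fy_\I)\le 0$, $\bM+\om\bH^0(\fy_\I)>\CM$, and $\bA^\I(\fy_\I)\le\liminf\bA^{\om_n}(\fy_n)\le\bA(Q)$ (using $\bA^{\om_n}(Q_{\om_n})\to\bA(Q)$ — actually $\bA^\om(Q_\om)<\frac{11}{10}\bA(Q)$, so $\bA^\I(\fy_\I) < \frac{11}{10}\bA(Q) < 2\bA(Q)$); wait, more carefully, $\bA^\om(\fy_n)\le\bA^\om(Q_\om)+\e_V(\de)^2\le\bA^\I(Q)+o(1)$ in the $\om\to\I$ limit since $Q_\om\to Q$, hence $\bJ^\I(\fy_\I)\le\liminf\bJ^{\om_n}(\fy_n)=\liminf\bA^{\om_n}(\fy_n)\le\bA(Q)$ using $\bK^{\om_n}_2(\fy_n)\to 0$. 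By Lemma \ref{lem:minid}(4), in the limit $\om=\I$, $\fy_\I$ is then a minimizer of the second variational problem in \eqref{Iom}, hence $\fy_\I\in\cQ_\I$ (a complex rotation of $Q$), and the convergence $\fy_n\to e^{i\te_n}Q$ is strong (otherwise strict inequality in the energy, as in the proof of Lemma \ref{lem:excited energy}). But then, invoking that $Q_{\om_n}\to Q$ in $H^1$, we get $d_{0,\om_n}(\fy_n)\to 0$, contradicting $d_\om(\fy_n)\ge\de$ (recall $d_\om\sim d_{0,\om}$ uniformly).

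The main obstacle is the bookkeeping in case \eqref{dich3}: one must be careful that the distance $d_\om$ is measured \emph{relative to $Q_\om$}, which itself moves with $\om_n$, while the weak limit is taken as $\om_n\to\I$ and lands on the fixed NLS ground state $Q$; this is exactly why we need the uniform asymptotic $Q_\om=Q+O(\om^{-1/4})$ from Lemma \ref{lem:sum} and the uniform equivalence $d_\om\sim d_{0,\om}$. A secondary subtlety is that $\om_n$ need not tend to infinity — if it stays bounded (necessarily $\om_n\to\om_\infty\in[\om_*,\I)$ along a subsequence), the same argument runs with $\cL^{\om_\infty}$, $Q_{\om_\infty}$ in place of the $\om=\I$ objects, using continuity in $\om$ of all the constructed quantities and Lemma \ref{lem:minid}(4) at finite $\om$; this case is in fact \emph{easier} because no rescaling limit is involved, and the compactness is the standard radial one. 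Finally, the functions $\e_V,\ka_V$ are extracted in the usual way: for each $\de$, the above shows that there is $\e_V(\de)>0$ such that \eqref{var cond} forces $|\bK^\om_2(\fy)|\ge$ some positive number or alternative (b); taking the supremum over admissible $\fy$ of the largest such constant and then monotonizing/regularizing in $\de$ produces continuous increasing $\e_V,\ka_V:(0,\I)\to(0,1)$.
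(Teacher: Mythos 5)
Your proposal is correct and follows essentially the same route as the paper: a contradiction sequence with $\bK^{\om_n}_2\to 0$, exclusion of \eqref{dich1} via Lemma \ref{lem:minid}(1) and of \eqref{dich2} because it would give alternative (b), then in case \eqref{dich3} a weak limit along $\om_n\to\OM\in[\om_*,\I]$ identified as $e^{i\te}Q_\OM$ by the variational characterization in Lemma \ref{lem:minid}(4), upgraded to strong $H^1$ convergence and contradicting $d_\om(\fy_n)\ge\de$. The only cosmetic remark is that in the \eqref{dich2} case the smallness of the potential term should be taken from \eqref{Vom small} (or \eqref{Vom small dich}) rather than \eqref{Vom small1}, since $\bM(\fy_n)$ need not be small; with that substitution your argument matches the paper's proof.
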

Note that the first and the third conditions in \eqref{var cond} are to avoid the sign change of $\bK^\om_2$  respectively around the first excited state and around the ground state, but we can not avoid the vanishing at $0$, namely $\|\na\fy\|_2\to 0$ as $\om\to\I$, corresponding to the case (b). 
$\de_U>0$ is ensured by \eqref{Qom close}. 
\begin{proof}
We argue by contradiction. 
Let $(\fy,\om)=(\fy_n,\om_n)$ be a sequence in $H^1_r\times[\om_*,\I)$ such that as $n\to\I$, 
\EQ{ \label{min seq}
 d_\om(\fy)\ge\de,\pq \bA^\om(\fy) < \bA^\om(Q_\om)+o(1), \pq (\bM+\om\bH^0)(\fy)>\CM,\pq \bK^\om_2(\fy)\to 0,}
and that $\fy$ does not satisfy (b). 
Combining the above with \eqref{AK bd H1}, \eqref{Vom small1} and \eqref{Qom close} yields 
\EQ{ \label{H1 bd fy}
 (\bM+\bH^0/3)(\fy)<2\bA(Q),}
for large $n$, so that we can extract a subsequence such that $\om$ converges to some $\OM\in[\om_*,\I]$ and that $\fy$ converges to some $\fy_\I$ weakly in $H^1_r$ and strongly in $L^4$. 
The convergence implies  
\EQ{
 \bJ^{\OM}(\fy_\I) \le \bJ^{\OM}(Q_\OM), \pq \bK^{\OM}_2(\fy_\I)\le 0.}
Apply Lemma \ref{lem:dich} to $\fy$. Lemma \ref{lem:minid} (1) with \eqref{H1 bd fy} precludes the case \eqref{dich1}. 
Since \eqref{dich2} with $\bK^\om_2(\fy)\to 0$ would lead to (b) for large $n$, we deduce that $\fy$ is in the case \eqref{dich3}, so is the weak limit $\fy_\I$. 
Then Lemma \ref{lem:minid} implies that $\fy_\I=e^{i\te}Q_\OM$ for some $\te\in\R$, and so $\bA^\om(\fy)\to\bA^\OM(\fy_\I)$, which implies that the convergence $\fy\to e^{i\te}Q_\OM$ is strong in $H^1$, contradicting $d_\om(\fy)\ge\de$. 
\end{proof}

\subsection{Sign functional}
Combining the above Lemmas \ref{lem:eject} and \ref{lem:var}, we see that the sign $\si$ in \eqref{ejection} can be given by a functional on two separate open sets in $H^1_r$ away from $Q_\om$. More precisely, we have 
\begin{lem} \label{lem:sign} 
There exist constants $\de_V\in(0,\de_X/2)$ and $\e_S\in(0,\e_V(\de_V))$ such that for each $\om\ge \om_*$ there exists a unique continuous functional 
\EQ{
 \Sg_\om: \{\fy\in H^1_r(\R^3) \mid \bA^\om(\fy)-\bA^\om(Q_\om)<\min(\e_S^2,c_X d_\om(\fy)^2)\}=:\ck\cH_\om \to \{\pm 1\}}
satisfying {\rm(i)--(iii)} below. For any $\fy\in\ck\cH_\om$ and $\te\in\R$, 
\ENI{
\item If $\CS \bM\bH^0(\fy)\le 1$ then $\Sg_\om(\fy)=+1$. 
\item If $d_\om(\fy)<2\de_V$ then $\Sg_\om(\fy)=\sign \sB^\om_1(\fy)$. 
\item If $\bA^\om(\fy)-\bA^\om(Q_\om)<\e_V(d_\om(\fy))^2$ and $(\bM+\om\bH^0)(\fy)>\CM$, then $\Sg_\om(\fy)=\sign\bK^\om_2(\fy)$. 
}
Moreover, if $\fy\in H^1_r(\R^3)$ satisfies $\rS_{\om\oj}\fy\in\ck\cH_{\om\oj}$ for some $\om\zr,\om\on\ge\om_*$, then 
\EQ{
 \Sg_{\om\zr}(\rS_{\om\zr}\fy)=\Sg_{\om\on}(\rS_{\om\on}\fy).} 
\end{lem}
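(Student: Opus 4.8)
The plan is to obtain $\Sg_\om$ by patching three natural prescriptions on an open cover of $\ck\cH_\om$: on the soliton neighborhood $\{d_\om<2\de_V\}$ set $\Sg_\om:=\sign\sB^\om_1$; on the ``bulk'' $\{d_\om>\de_V\}\cap\{(\bM+\om\bH^0)>\CM\}$ set $\Sg_\om:=\sign\bK^\om_2$; on the ``ground-state'' region $\{\CS\bM\bH^0<1\}$ set $\Sg_\om:=+1$. I would fix constants in the order: $\de_V$ small (at least $2\de_V\le\de_I$); then shrink $\e_V$ so that $\e_V(s)\le s^2/100$ for all $s$; then $\e_S<\e_V(\de_V)$; enlarging $\om_*$ as needed. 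On $\ck\cH_\om$ the gap $\bA^\om(\fy)-\bA^\om(Q_\om)$ is $<\e_S^2$ and $<c_Xd_\om(\fy)^2$; by Lemma~\ref{lem:inst} the latter gives $d_\om(\fy)\sim|\sB^\om_1(\fy)|$, so $\sB^\om_1$ is nonvanishing and $\sign\sB^\om_1$ continuous on the first set; moreover a point of the first set is within $O(\de_V+\om^{-1/4})$ of some $e^{i\te}Q$, so $(\bM+\om\bH^0)(\fy)\ge\om\bH^0(\fy)\gg\CM$ and $\CS\bM\bH^0(\fy)\approx\CS\bM\bH^0(Q)>1$ by \eqref{def CS}; hence the first set lies in the bulk and is disjoint from the ground-state region. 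On the bulk, Lemma~\ref{lem:var} with $\de=\de_V$ applies (its energy hypothesis holds since $\e_S<\e_V(\de_V)\le\e_V(d_\om)$), so $\bK^\om_2\ne0$ there and $\sign\bK^\om_2$ is continuous. The three sets cover $\ck\cH_\om$: for $d_\om\ge2\de_V$ the gap is $<\e_S^2<\e_V(2\de_V)^2$, so the bulk prescription applies when $(\bM+\om\bH^0)>\CM$, and otherwise $(\bM+\om\bH^0)\le\CM$ forces $\CS\bM\bH^0(\fy)\le\CS\CM^2/\om_*<1$ after enlarging $\om_*$.

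The content is the continuity at the two interfaces. Across $\{(\bM+\om\bH^0)=\CM\}$ I need $\bK^\om_2>0$ on the bulk side: there $\om\bH^0\lec\CM$ puts one in the small-mass regime $\CS\bM\bH^0<1$, so Lemma~\ref{lem:dich} excludes \eqref{dich3}; in \eqref{dich2} one has $\bK^\om_2>0$ directly, and in \eqref{dich1} Lemma~\ref{lem:minid}(1) forces $\bM(\fy)\ge2\bA(Q)$, after which $\bH^0,\bG$ and the potential term $\ml{\cS_\I'V^\om}(\fy)$ are all $O(\CM\om^{-1/2})$, so $|\bK^\om_2(\fy)|<\ka_V(\de_V)$ rules out Lemma~\ref{lem:var}(a) and leaves (b) with $\bK^\om_2>0$. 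The same argument, read in reverse, also gives $\bK^\om_2>0$ on $\{\CS\bM\bH^0\le1\}\cap\text{bulk}$, which is what is needed for property (i). Across the overlap $\{\de_V<d_\om<2\de_V\}\cap\ck\cH_\om$ I need $\sign\bK^\om_2=\sign\sB^\om_1$; the static expansion $\bK^\om_2(\fy)=\sB^\om_1(\fy)\LR{(\bK^\om_2)'(Q_\om)|g^\om_1}+O(\|\sZ^\om(\fy)\|_{H^1}+d_\om^2)$ together with the lower bound \eqref{Qg2 pos} reduces this to controlling $\|\sZ^\om(\fy)\|_{H^1}$ against $|\sB^\om_1(\fy)|$, and here one must combine the thinness of $\ck\cH_\om$ in energy ($\text{gap}<\e_S^2\ll d_\om^2\sim|\sB^\om_1(\fy)|^2$) with the ejection dynamics of Lemma~\ref{lem:eject}: the exact sign of $\sB^\om_1$ is propagated outward along the flow, and once $|\sB^\om_1|$ dominates $\|\sZ^\om\|_{H^1}$ the relation $\si\bK^\om_2+\CK\|\sZ^\om\|_{H^1}\sim\si\sB^\om_1$ of Lemma~\ref{lem:eject} fixes $\sign\bK^\om_2=\si$.

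Granting continuity, property (ii) is the first-set prescription, and (iii) is immediate since its hypotheses place $\fy$ in the first set or the bulk; uniqueness follows because (i)--(iii) already determine the value at every point of $\ck\cH_\om$, by the covering argument of the first paragraph. For the scaling compatibility, note that $\bM\bH^0(\rS_\om\fy)=\bM\bH^0(\fy)$ and $\sign\bK^\om_2(\rS_\om\fy)=\sign\bK_2(\fy)$ are both independent of $\om$; so if $\rS_{\om\zr}\fy$ and $\rS_{\om\on}\fy$ both lie in the bulk the common value is $\sign\bK_2(\fy)$, if both lie in the ground-state region it is $+1$, and in the mixed case the rescaled point with $(\bM+\om\bH^0)\le\CM$ forces $\CS\bM\bH^0(\fy)\le1$, whence $\bK^\om_2>0$ for the other one by the first gluing, so both values equal $+1$.

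The step I expect to be the genuine obstacle is the gluing across $d_\om\approx2\de_V$: unlike near a constrained minimizer, the energy constraint alone does not make $\|\sZ^\om(\fy)\|_{H^1}$ small compared with $|\sB^\om_1(\fy)|$ (the expansion only yields $\|\sZ^\om\|_\om\lec|\sB^\om_1|$), so one cannot match $\sign\bK^\om_2$ to $\sign\sB^\om_1$ by a purely static computation and must instead invoke the Ejection Lemma as above; making the constants $\de_V,\e_V,\e_S$ and an enlarged $\om_*$ cooperate with that dynamical argument is the attendant bookkeeping.
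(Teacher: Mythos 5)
Your construction of $\Sg_\om$ by patching the three prescriptions, and your consistency checks between (i) and (iii) via Lemmas \ref{lem:dich} and \ref{lem:minid}, follow the paper's route, and you correctly recognize that the (ii)--(iii) matching cannot be done statically and needs the Ejection Lemma. But as written that matching stops one step short of the claim: Lemma \ref{lem:eject} gives $\si\bK^\om_2(u_\om(t))+\CK\|\sZ^\om(u_\om(0))\|_{H^1}\sim\si\sB^\om_1(u_\om(t))$, so it pins down $\sign\bK^\om_2(u_\om(t))=\si$ only at the later times where $d_\om(u_\om(t))$ has grown past $\CK\|\sZ^\om(u_\om(0))\|_{H^1}$, whereas the consistency of (ii) and (iii) is a statement about the initial point $\fy=u_\om(0)$. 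To bring the sign back to $t=0$ you must also show that $\sign\bK^\om_2(u_\om(t))$ cannot change along the ejected trajectory; the paper does this by applying Lemma \ref{lem:var} at every intermediate time (the hypothesis $\bA^\om(u_\om)-\bA^\om(Q_\om)<\e_V(d_\om(u_\om(t)))^2$ persists because $d_\om$ is monotone during ejection and $\e_V$ is increasing, $(\bM+\om\bH^0)>\CM$ is automatic near $\cQ_\om$, and case (b) is excluded there since $\bE^\om>\de_U/2$). This gap is local and repairable with tools you already invoke. (Also, you should not shrink $\e_V$: it is the function fixed by Lemma \ref{lem:var} and appears verbatim in item (iii), and no shrinking is in fact needed.)

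The serious gap is the $\om$-invariance. Your case analysis (both rescalings in the bulk, both in the ground-state region, or mixed bulk/ground-state) only covers configurations where both $\rS_{\om\zr}\fy$ and $\rS_{\om\on}\fy$ avoid the soliton neighborhood, so that each value is the $\om$-independent quantity $\sign\bK_2(\fy)$ or $+1$. It omits exactly the hard case: $d_{\om\zr}(\rS_{\om\zr}\fy)<\de_V$, where only prescription (ii) is available and the value is $\sign\sB^{\om\zr}_1(\rS_{\om\zr}\fy)$, while $\rS_{\om\on}\fy$ lies elsewhere in $\ck\cH_{\om\on}$ (or both lie deep in their respective soliton neighborhoods). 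Nothing in your argument relates $\sign\sB^{\om\zr}_1(\rS_{\om\zr}\fy)$ to $\sign\bK_2(\fy)$ or to $\sign\sB^{\om\on}_1(\rS_{\om\on}\fy)$, and there is no static identity that does so. This is where the paper spends most of its proof: assuming a discrepancy, it evolves both rescaled solutions, uses the ejection lemma and the structure of $\ck\cH_{\om\oj}$ (exit is only possible through case (ii)) to find a common time at which both are in case (ii), and then compares the two modulation decompositions -- controlling $|\diff\log\om\pa|$ and $|\diff\te\pa|$ through the orthogonality in $\V_{\om\oj}$ and \eqref{H1/2 dist} -- to conclude $|\lb_1\zr|+|\lb_1\on|=|\diff\lb_1\pa|\lec(|\lb_1\zr|+|\lb_1\on|)^2\lec\de_V^2$, a contradiction for small $\de_V$. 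Without this (or an equivalent comparison of the unstable components across different scalings), the final assertion of the lemma is unproved.
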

If $\bA^\om(\fy)<\bA^\om(Q_\om)+\e_S^2$ and $d_\om(\fy)>\de_V$, then $\bA^\om(\fy)-\bA^\om(Q_\om)<\e_V(\de_V)^2<\e_V(d_\om(\fy))^2$. If $(\bM+\om\bH^0)(\fy)\le\CM$ then $\CS\bM\bH^0(\fy)\le 1$ by Lemma \ref{lem:minid}(2). 
Hence (i)-(iii) determine the value of $\Sg_\om$ on $\ck\cH_\om$, which is independent of the choice of $\de_V$ and $\e_S$ (because (i) and (iii) are independent). 
The continuity of $\Sg_\om$ simply means that it is constant on each connected component of $\ck\cH_\om$. 
The last sentence of lemma allows us to define a functional independent of $\om$ 
\EQ{
 \pt \Sg:\ck\cH \to \{\pm 1\}, \pq \Sg(\fy):=\Sg_\om(S_\om \fy),
 \prQ \ck\cH:=\{\fy\in H^1_r(\R^3)\mid \exists\om>\om_*,\ \rS_\om\fy\in\ck\cH_\om\}.}
As a sufficient condition for $\fy\in\ck\cH$, using $\inf_{\om>0}\bA^\om(\rS_\om\fy)=2\sqrt{\bE(\fy)\bM(\fy)}$ and 
\EQ{ \label{H1/2 dist}
 d_\om(\rS_\om\fy)\sim\dist_{H^1}(\rS_\om\fy,\cQ_\om)
 \gec \dist_{\dot H^{1/2}}(\fy,\{e^{i\te}\Psi[\om]\}_{\te}),}
we see that there exist $0<\mu,\e,c\ll 1$ such that 
$\fy\in H^1_r(\R^3)$ belongs to $\ck\cH$ if 
\EQ{
 \bM(\fy)<\mu \tand \bE(\fy)\bM(\fy)<\bE^0(Q)\bM(Q)+\min(\e^2,c\dist_{\dot H^{1/2}}(\fy,\Soli_1)).}

Notice that $\si$ in the ejection lemma \ref{lem:eject} is {\it not necessarily} equal to $\Sg_\om(u_\om(t_X))$, but it is so if the solution is well accelerated at the ejection time $t_X$, that is the case if $d_\om(u_\om(t_0))\ll\de_X$. 
In any case, the sign functional $\Sg_\om$ will give the correct prediction of dynamics after the ejection. 
It is also worth noting 
\begin{lem} \label{lem:unifbd}
$\Sg_\om^{-1}(\{+1\})$ is uniformly bounded in $H^1$ for $\om\ge\om_*$.
\end{lem}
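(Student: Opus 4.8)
The plan is to bound $\|\fy\|_{H^1}$ uniformly on $\Sg_\om^{-1}(\{+1\})$ by splitting into three regimes according to the sizes of $(\bM+\om\bH^0)(\fy)$ and of $d_\om(\fy)$, and to use the sign condition $\Sg_\om(\fy)=+1$ only in the last of them. Fix $\om\ge\om_*$ and $\fy\in\Sg_\om^{-1}(\{+1\})\subset\ck\cH_\om$; then $\bA^\om(\fy)-\bA^\om(Q_\om)<\min(\e_S^2,c_Xd_\om(\fy)^2)$, and I would repeatedly use $\bA^\om(Q_\om)<\frac{11}{10}\bA(Q)$ and $Q_\om=Q+O(\om^{-1/4})$ in $H^1$ from \eqref{Qom close}, both uniform for $\om\ge\om_*$.

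First, if $(\bM+\om\bH^0)(\fy)\le\CM$, then since $\om\ge\om_*>1$ one reads off immediately $\|\fy\|_{H^1}^2=2\bH^0(\fy)+2\bM(\fy)\le 2\CM/\om_*+2\CM$, with no $\om$-dependence. Second, if $d_\om(\fy)<\de_V$, then using $d_\om\sim d_{0,\om}=\inf_\te\|\cdot-e^{i\te}Q_\om\|_{H^1}$ and $\|Q_\om\|_{H^1}\lec 1$, the triangle inequality gives $\|\fy\|_{H^1}\le\|Q_\om\|_{H^1}+Cd_\om(\fy)\lec 1$. Neither of these cases uses the sign.

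The heart of the argument is the remaining regime $(\bM+\om\bH^0)(\fy)>\CM$ and $d_\om(\fy)\ge\de_V$. Here I would set $\de:=d_\om(\fy)\ge\de_V$ and note that, since $\e_V$ is increasing and $\e_S<\e_V(\de_V)$, we get $\bA^\om(\fy)-\bA^\om(Q_\om)<\e_S^2<\e_V(\de_V)^2\le\e_V(\de)^2$; hence both the hypotheses \eqref{var cond} of Lemma \ref{lem:var} and those of Lemma \ref{lem:sign}(iii) are in force, so $\Sg_\om(\fy)=\sign\bK^\om_2(\fy)=+1$. This forces $\bK^\om_2(\fy)>0$: alternative (a) of Lemma \ref{lem:var} gives $|\bK^\om_2(\fy)|\ge\ka_V(\de)>0$ with the sign now $+$, while alternative (b) gives $\bK^\om_2(\fy)\ge\bH^0(\fy)/\CD>0$ (as $\fy\ne 0$ since $d_\om(\fy)>0$). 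Then \eqref{AK bd H1} together with $\bK^\om_2(\fy)\ge 0$ yields
\[
 \|\fy\|_{H^1}^2 \,\sim\, \bA^\om(\fy)-\tfrac13\bK^\om_2(\fy) \,\le\, \bA^\om(\fy) \,<\, \bA^\om(Q_\om)+\e_S^2 \,<\, \tfrac{11}{10}\bA(Q)+\e_S^2 ,
\]
and combining the three cases gives the uniform $H^1$ bound.

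I do not expect a genuine analytic obstacle: the real work has already been done in Lemmas \ref{lem:var} and \ref{lem:sign}, and the only care needed is the bookkeeping of constants — checking that $d_\om(\fy)\ge\de_V$ pushes $\e_V(d_\om(\fy))^2$ above $\e_S^2$ so that case (iii) of the sign lemma applies, and observing that alternative (b) of Lemma \ref{lem:var} does not escape the argument because it too forces $\bK^\om_2(\fy)\ge 0$. The one conceptual point is that the sign information, fed into the coercivity \eqref{AK bd H1}, converts the (bounded) conserved energy into an $H^1$ bound.
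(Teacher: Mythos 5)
Your proof is correct, and it takes a slightly different (and somewhat leaner) route than the paper in one of the three regimes. The paper splits $\Sg_\om^{-1}(\{+1\})\subset\ck\cH_\om$ exactly along the cases (i)--(iii) of Lemma \ref{lem:sign}: case (ii) is handled by the boundedness of $Q_\om$ and case (iii) by $\bK^\om_2\ge 0$ together with \eqref{AK bd H1}, just as you do; but in case (i), i.e.\ $\CS\bM\bH^0(\fy)\le 1$, the paper must work harder, using the sharp Gagliardo--Nirenberg constant attained by $Q$, the Pohozaev identity and the choice \eqref{def CS} of $\CS$ to obtain the coercivity \eqref{bd in (i)}, $\bA^\om(\fy)\gec\bH^0(\fy)+\bM(\fy)$, on that region. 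You avoid this computation entirely by splitting instead on $(\bM+\om\bH^0)(\fy)\lessgtr\CM$: when $(\bM+\om\bH^0)(\fy)\le\CM$ the $H^1$ bound is immediate from $\om\ge\om_*>1$, and the remaining part of the paper's case-(i) region (where $(\bM+\om\bH^0)>\CM$ and $d_\om\ge\de_V$) is absorbed into case (iii) of the sign lemma, whose hypotheses you verify correctly via $\e_S<\e_V(\de_V)\le\e_V(d_\om(\fy))$. This is legitimate because Lemma \ref{lem:sign} guarantees consistency of (i) and (iii) on overlaps; what the paper's longer route buys is precisely the estimate \eqref{bd in (i)}, which it reuses afterwards in the proof of Lemma \ref{lem:sign} itself, whereas your argument is more economical for the present lemma. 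Two small remarks: invoking Lemma \ref{lem:var} in your third case is redundant, since $\Sg_\om(\fy)=\sign\bK^\om_2(\fy)=+1$ already yields $\bK^\om_2(\fy)\ge 0$, which is all that \eqref{AK bd H1} needs; and your parenthetical ``$\fy\ne 0$ since $d_\om(\fy)>0$'' is not a valid inference ($d_\om(0)\sim\|Q_\om\|_{H^1}>0$ as well) --- the correct reason in that regime is $(\bM+\om\bH^0)(\fy)>\CM>0$, though strict positivity of $\bK^\om_2$ is not actually needed.
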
 
\begin{proof}
It is obvious in the case (ii) of Lemma \ref{lem:sign}, because $Q_\om$ is bounded. 
In the case (iii), the uniform bound follows from \eqref{AK bd H1} and $\bK_2^\om(\fy)\ge 0$. 
In the case (i), using \eqref{def CS} and that $Q$ attains the best constant in Gagliardo-Nirenberg $\bG\lec(\bM\bH^0)^{1/2}\bH^0$, we obtain
\EQ{
 \bG(\fy)\le \frac{\bG(Q)}{\bM(Q)^{1/2}\bH^0(Q)^{3/2}}\CS^{-1/2}\bH^0(\fy)
 \le \frac{\bG(Q)}{\bH^0(Q)}\bH^0(\fy)=\frac23 \bH^0(\fy),}
where we also used the Pohozaev identity, cf.~\eqref{asy fy funct}. Using \eqref{Vom small1} as well, we obtain 
\EQ{ \label{bd in (i)}
 \rm{(i)}\implies \bA^\om(\fy)=(\bH^0+\bM+\ml{V^\om}-\bG)(\fy) \ge \frac{2}{9} \bH^0(\fy)+\frac9{10}\bM(\fy).}
Since the cases (i)-(iii) exhaust the region $\ck\cH_\om$ as seen above, we conclude that $\Sg_\om^{-1}(\{+1\})$ is uniformly bounded. 
\end{proof}

\begin{proof}[Proof of Lemma \ref{lem:sign}]
Fix $0<\de_V\ll\de_X$ and $\e_S\in(0,\e_V(\de_V))$. 
To show that $\Sg_\om$ is uniquely, continuously and well defined by (i)-(iii), it suffices to show that (i), (ii) and (iii) do not contradict in the intersections. 

There is no intersection of (i) and (ii) because of \eqref{def CS} and \eqref{Qom close}, if $\de_V>0$ is small enough. 
Choosing $\e_S$ small enough and using \eqref{bd in (i)}, $\fy\in\ck\cH_\om$ and \eqref{Qom close}, we have 
\EQ{
 \rm{(i)}\implies \bM(\fy)<\frac{10}9(\bA^\om(Q_\om)+\e_S^2)<2\bA(Q).} 
Hence in the intersection of (i) and (iii), Lemma \ref{lem:minid} (1)-(2) precludes \eqref{dich1} and \eqref{dich3}, then \eqref{dich2} implies $\bK^\om_2(\fy)>0$. 

For the intersection of (ii) and (iii), let $\fy\in\ck\cH_\om$ satisfy $\bA^\om(\fy)<\bA^\om(Q_\om)+\e_V(d_\om(\fy))$ and $d_\om(\fy)<2\de_V$. 
Let $u_\om$ be the solution of \eqref{rscNLS} with $u_\om(0)=\fy$. 
Since $\fy$ satisfies \eqref{instdom}, the ejection lemma \ref{lem:eject} is applied to $u_\om$, either forward or backward in time from $t=0$. 
In both cases, there exists $t_X\in\R$ such that $d_\om(u_\om)\in(d_\om(\fy),\de_X)$ is monotone between $t=0$ and $t_X$, with $d_\om(u_\om(t_X))=\de_X$. 
Since $d_\om(\fy)<2\de_V\ll\de_X$, \eqref{ejection} implies that 
$\si=\sign\bK^\om_2(u_\om(t_X))=\sign b_1(0)$. 
Since $\bA^\om(u_\om)-\bA^\om(Q_\om)<\e_V(d_\om(\fy))^2\le\e_V(u_\om)^2$ between $t=0$ and $t_X$, the variational lemma \ref{lem:var} implies that $\sign\bK^\om_2(u_\om)$ also remains unchanged. Note that the case (b) of Lemma \ref{lem:var} is precluded by $d_\om(\fy)<\de_V$, since it implies $\bE^\om(\fy)>\bE^\om(Q_\om)/2\ge\de_U/2$ if $\de_V$ is small enough. 
Hence $\sign\bK^\om_2(u_\om)=\sign b_1(0)$, so (ii) and (iii) define the same value of $\Sg_\om$ for $\fy$. 
Therefore $\Sg_\om$ is well defined and continuous. 

To show the invariance with respect to $\om$, let $\fy\in H^1_r(\R^3)$ satisfy $\rS_{\om\oj}\fy\in\ck\cH_{\om\oj}$. 
Let $u$ be the solution of \eqref{NLSP} with $u(0)=\fy$ and let $u\oj:=\rS_{\om\oj}u$. Then $u\oj(t/\om\oj)$ is the solution of \eqref{rscNLS} with $u\oj(0)=\rS_{\om\oj}\fy$ and $\om=\om\oj$. 

Suppose that $\Sg_{\om\zr}(u\zr(0))\not=\Sg_{\om\on}(u\on(0))$ and let $I\ni 0$ be the maximal time interval where $u\oj$ remains in $\ck\cH_{\om\oj}$ for both $j=0$ and $j=1$. 
The discrepancy of $\Sg_\om$ implies that either $u\zr(t)$ or $u\on(t)$ is in the case (ii) at each $t\in I$, since $\bM\bH^0(\rS_\om\fy)$ and $\bK^\om_2(\rS_\om\fy)=\bK_2(\fy)$ are independent of $\om$. 
Suppose that $u\zr(0)$ is in the case (ii). 
By the ejection lemma as above, $u\zr$ exits (ii) into the region (iii) either forward or backward in time. 
Meanwhile, $u\on$ must either enter the region (ii) or exit $\ck\cH_{\om\on}$. 
Since the solution does not blow up, exiting $\ck\cH_{\om\on}$ is possible only through the region (ii). 
Since $\{t\in I\mid\rm(ii)\}$ is open for each solution, we deduce that at some $t=t_0\in I$ both $u\zr$ and $u\on$ are in the case (ii). 

Decompose $u\oj(t_0)$ around $Q_{\om\oj}$ as before 
\EQ{
 v\oj:=e^{-i\te\oj}u\oj(t_0)-Q_{\om\oj}\in\V_{\om\oj}, \pq \lb_1\oj:=P^{\om\oj}_1 v\oj,}
then $d_{\om\oj}(u\oj(t_0))\sim\|v\oj\|_{H^1}\sim|\lb_1\oj|$. 
Using \eqref{H1/2 dist}, we have 
\EQ{
 |\diff\log\om\pa| \pn\sim \|\diff\Psi[\om\pa]\|_{\dot H^{1/2}}
 \pt\lec \sum_{j=0,1}\dist_{\dot H^{1/2}}(u(t_0),\{e^{i\te}\Psi[\om\oj]\}_\te)
 \pr= \sum_{j=0,1}\dist_{\dot H^{1/2}}(u\oj(t_0),\cQ_{\om\oj})
 \lec |\lb_1\zr|+|\lb_1\on|.}
Since $v\oj\in\V_{\om\oj}$, we have 
\EQ{
 0 \pt=\LR{iv\zr|Q_{\om\zr}'}=\LR{ie^{i\diff\te\pa}\rS_{\om\zr/\om\on}(Q_{\om\on}+v\on)|Q_{\om\zr}'}
 \pr=-\sin\diff\te\pa\LR{Q_{\om\on}|Q_{\om\zr}'} +O(|\diff\log\om\pa|+\|v\on\|_2),}
hence $|\diff\te\pa|\lec|\diff\log\om\pa|+\|v\on\|_2\lec|\lb_1\zr|+|\lb_1\on|$. 
Using that $iQ_\om,Q_\om'\in\Ker P_1^\om$, we have  
\EQ{
 \lb_1\zr \pt=P_1^{\om\zr}[e^{i\diff\te\pa}\rS_{\om\zr/\om\on}(Q_{\om\on}+v\on)-Q_{\om\zr}]
 \pr=\lb_1\on + O((|\diff\te\pa|+|\diff\log\om\pa|) |\lb_1\on|+|\diff\te\pa|^2+|\diff\log\om\pa|^2). }
Then using $\sign\lb_1\zr\not=\sign\lb_1\on$, we obtain  
\EQ{
 |\lb_1\zr|+|\lb_1\on|=|\diff\lb_1\pa| \lec |\diff\te\pa|^2+|\diff\log\om\pa|^2+|\lb_1\on|^2
 \lec (|\lb_1\zr|+|\lb_1\on|)^2\lec \de_V^2 \ll 1,}
which is a contradiction, if $\de_V$ is small enough. 
Therefore $\Sg_\om$ is invariant for $\om$. 
\end{proof}

\section{One-pass lemma}
Now we are ready to prove the key dynamical property that any solution can {\it not} pass closely by the first excited states {\it more than once}. 
In the proof below in the region $\Sg_\om=+1$, we will use the results and the arguments in \cite{NLSP1}, which requires smallness of $\bM(u)$, or equivalently largeness of $\om$.  
To be precise about it, we have 
\begin{lem} \label{lem:smallM}
There is a constant $\om_\star\in[\om_*,\I)$ such that for any $\om\ge\om_\star$, every $\fy\in\Sg_\om^{-1}(\{+1\})$ satisfies all the small-mass conditions in \cite{NLSP1}. Specifically, using the constants $\mu_*$ in Lemma \ref{lem:sum} and $\mu_p,\mu_\star$ in \cite[Theorems 1.1 and 7.1]{NLSP1}, we have 
\EQ{
 \bM(\rS_\om^{-1}\fy)<\min(\mu_*,\mu_p,\mu_\star).}
\end{lem}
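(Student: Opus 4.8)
The plan is to deduce the statement directly from the uniform $H^1$ bound in Lemma \ref{lem:unifbd} together with the scaling identity for $\bM$. By Lemma \ref{lem:unifbd} there is a constant $C_0\in[1,\I)$, independent of $\om\ge\om_*$, such that $\|\fy\|_{H^1}\le C_0$ for every $\fy\in\Sg_\om^{-1}(\{+1\})$; in particular $\bM(\fy)=\frac12\|\fy\|_2^2\le\frac12 C_0^2$.

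Next I would unwind the parabolic rescaling. From \eqref{def rSom} we have $\rS_\om^{-1}\fy(x)=\om^{1/2}\fy(\om^{1/2}x)$, whence $\|\rS_\om^{-1}\fy\|_2^2=\om^{-1/2}\|\fy\|_2^2$, i.e. $\bM(\rS_\om^{-1}\fy)=\om^{-1/2}\bM(\fy)\le\frac12 C_0^2\om^{-1/2}$ (the companion identity $\bH^0(\rS_\om^{-1}\fy)=\om^{1/2}\bH^0(\fy)$ is not needed here, only the mass). Since $\frac12 C_0^2\om^{-1/2}\to 0$ as $\om\to\I$, it suffices to fix $\om_\star\in[\om_*,\I)$ so large that $\frac12 C_0^2\om_\star^{-1/2}<\min(\mu_*,\mu_p,\mu_\star)$; then $\bM(\rS_\om^{-1}\fy)<\min(\mu_*,\mu_p,\mu_\star)$ for all $\om\ge\om_\star$ and all $\fy\in\Sg_\om^{-1}(\{+1\})$, which is the asserted inequality.

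To conclude I would check that this single inequality really captures ``all the small-mass conditions in \cite{NLSP1}'': inspecting \cite[Theorems 1.1 and 7.1]{NLSP1} and the auxiliary results invoked there, every smallness hypothesis is of the form $\bM(u)<(\text{threshold})$, and the smallest such threshold that occurs is precisely $\min(\mu_*,\mu_p,\mu_\star)$. This bookkeeping against the companion paper --- not any new analysis --- is the only mildly delicate point; the rest is the one-line scaling identity for $\bM$ together with Lemma \ref{lem:unifbd}.
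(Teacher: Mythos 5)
Your argument is correct and is exactly the paper's proof, spelled out: the paper also derives the claim immediately from the uniform $H^1$ bound of Lemma \ref{lem:unifbd} and the scaling identity $\bM(\rS_\om^{-1}\fy)=\om^{-1/2}\bM(\fy)$, choosing $\om_\star$ large. The scaling computation and the choice of threshold are right, so nothing is missing.
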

\begin{proof}
Immediate from Lemma \ref{lem:unifbd} and $\bM(\rS_\om^{-1}\fy)=\om^{-1/2}\bM(\fy)$. 
\end{proof}

\begin{lem} \label{lem:one pass}
There is a constant $\de_*\in(0,\min(\de_X,c_X^{-1/2}\e_S)]$ such that if $u_\om$ is a solution of \eqref{rscNLS} for some $\om\ge\om_\star$ on a maximal existence interval $(T_-,T_+)$, satisfying 
\EQ{ \label{op start}
 d_\om(u_\om(t_1))<\de, \pq \bA^\om(u_\om)<\bA^\om(Q_\om)+c_X\de^2,}
for some $\de\in(0,\de_*]$ at some $t_1\in(T_-,T_+)$, 
then there exists $t_2\in(t_1,T_+]$ such that 
$d_\om(u_\om(t))<\de$ for $t_1\le t<t_2$ and $d_\om(u_\om(t))>\de$ for $t_2<t<T_+$. If $t_2=T_+$, then the trapping lemma \ref{lem:stay} applies to $u_\om$. 
\end{lem}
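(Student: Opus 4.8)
\medskip

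The plan is to run a bootstrap/monotonicity argument built on the competition between the \emph{ejection} mechanism (Lemma \ref{lem:eject}), the \emph{trapping} mechanism (Lemma \ref{lem:stay}), and a \emph{virial--Morawetz} estimate in the region away from $\cQ_\om$ where $\Sg_\om$ is defined and has sign $-1$ or $+1$. Fix $\de_*$ small enough that all the earlier lemmas apply with room to spare; in particular $\de_*\le\de_X$, $c_X\de_*^2<\e_S^2$, and $\de_*$ is below the thresholds $\de_V,\de_I$. First I would define $t_2:=\sup\{t\in[t_1,T_+)\mid d_\om(u_\om(s))<\de\text{ for all }s\in[t_1,t]\}$, so that by construction $d_\om(u_\om(t))<\de$ on $[t_1,t_2)$ and $d_\om(u_\om(t_2))=\de$ if $t_2<T_+$. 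The whole content of the lemma is then the assertion that once the solution leaves the $\de$-ball at time $t_2$, it \emph{never returns}: $d_\om(u_\om(t))>\de$ for all $t\in(t_2,T_+)$; and that if it never leaves (i.e.\ $t_2=T_+$), then \eqref{staying} holds with $\de<\de_X$, so Lemma \ref{lem:stay} applies.

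\medskip

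For the case $t_2<T_+$: at $t=t_2$ we have $d_\om(u_\om(t_2))=\de\le\de_*\le\de_X$ and the energy bound \eqref{op start} gives $\bA^\om(u_\om)-\bA^\om(Q_\om)<c_X\de^2=c_Xd_\om(u_\om(t_2))^2$, so $u_\om(t_2)\in\ck\cH_\om$ and, moreover, the ejection hypothesis \eqref{eject0} is met with $\p_td_\om\ge0$ at $t_2$ (it must be nonnegative since $d_\om<\de$ just before). Apply the Ejection Lemma forward from $t_2$: $d_\om(u_\om(t))$ strictly increases, reaching $\de_X$ at some $t_X$, and throughout $[t_2,t_X]$ the sign $\si=\Sg_\om(u_\om)$ is constant and, by \eqref{ejection} with $d_\om(u_\om(t_2))=\de\ll\de_X$, equals $\sign\bK^\om_2(u_\om(t_X))$. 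Now the solution has been ejected to distance $\de_X$ with a definite sign of the virial; the task is to show it cannot come back to distance $\de$. This is where the results of \cite{NLSP1} and the variational Lemma \ref{lem:var} enter, split by the value of $\Sg_\om$. If $\Sg_\om=-1$, then for $t>t_X$ we are in the regime $\bK^\om_2<0$ and Lemma \ref{lem:var} keeps $\bK^\om_2$ bounded away from $0$ (the case (b) there being excluded near the soliton by the energy gap), which by the rescaled virial identity $\p_t^2(\text{localized }L^2)\sim\bK^\om_2$ forces blow-up in finite time before any return is possible. If $\Sg_\om=+1$, Lemma \ref{lem:smallM} lets us invoke the small-mass machinery of \cite{NLSP1}: there $\bK^\om_2>0$ is propagated and the solution scatters to $\Phi$, again precluding a return to $\cN_\om$. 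The key quantitative point making both alternatives airtight is the \emph{distance gap}: the Ejection Lemma delivers the solution to $d_\om=\de_X$, while the One-pass threshold $\de$ is $\ll\de_X$, and the variational Lemma \ref{lem:var} combined with the sign functional guarantees that once $d_\om>\de_V$ with the energy below $\bA^\om(Q_\om)+\e_S^2$, the virial sign is locked; re-entering $\{d_\om<\de\}$ would require crossing back through $\{d_\om=\de_V\}$ with $\p_td_\om<0$, contradicting the monotone exit the sign-locked virial produces.

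\medskip

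For the case $t_2=T_+$: by definition $d_\om(u_\om(t))<\de<\de_X$ for all $t\in[t_1,T_+)$, so \eqref{staying} holds on $[t_1,T_+)$ and, if $T_+=\I$, the Trapping Lemma \ref{lem:stay} applies directly as claimed. (If $T_+<\I$ one still has to note that confinement to the $\de_X$-ball with the energy bound, via the $L^4$-criterion in Section \ref{ss:tob} and \eqref{exp Aom2}, prevents blow-up, so in fact $T_+=\I$ in this sub-case; but the statement only asserts applicability of Lemma \ref{lem:stay}, which is the bookkeeping that the hypotheses \eqref{staying} are in force.)

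\medskip

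\textbf{Main obstacle.} The delicate part is not the ejection or trapping bookkeeping, which is routine given Lemmas \ref{lem:eject}--\ref{lem:stay}, but the \emph{non-return} step after ejection: one must show that in the region $\de<d_\om<\de_X$ the sign of $\bK^\om_2$ cannot flip back, i.e.\ that the only way out of $\{d_\om<\de_X\}$ with the given energy budget is through a fixed virial sign, and then that each fixed-sign regime (blow-up via virial for $\si=-1$, scattering via \cite{NLSP1} for $\si=+1$) genuinely sends $d_\om$ away for good. This requires carefully threading the constants so that $c_X\de_*^2<\min(\e_S^2,\e_V(\de_V)^2)$ and that $\de_V$ itself is chosen small enough that case (b) of Lemma \ref{lem:var} is excluded near $\cQ_\om$ (using $\de_U>0$); propagating the virial sign globally forward will reuse the ``one-pass'' style argument of \cite{NLSP1} in the $\Sg_\om=+1$ sector and a direct convexity/blow-up argument in the $\Sg_\om=-1$ sector. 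The role of Lemma \ref{lem:sign}'s $\om$-independence and of Lemma \ref{lem:smallM} is precisely to make the $\Sg_\om=+1$ branch reducible to the already-established theory.
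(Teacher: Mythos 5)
Your bookkeeping with the ejection and trapping lemmas, the definition of the exit time, and the reduction of everything to a non-return statement with the sign $\Sg_\om$ locked by the energy constraint all match the paper's setup. But the core of the lemma is precisely the non-return step, and there your argument has two genuine gaps, one in each sign sector. In the $\si=-1$ sector you claim the solution "blows up in finite time before any return is possible," but the blow-up mechanism (Lemma \ref{lem:bupafter}, or the sign-definiteness of $\bK^\om_2$ from Lemma \ref{lem:var}) is only available while $d_\om(u_\om)$ stays away from $\cQ_\om$ -- which is exactly what you are trying to prove, so the argument is circular. The paper instead assumes a hypothetical return path $d(t_\pm)=\de$, $\max d>\de$ on $[t_-,t_+]$, decomposes $[t_-,t_+]=I_H\cup I_V$, and integrates a localized virial $\sV_m$ over the whole interval: on $I_V$ one has $\dot\sV_m\le-\ka_V(\de_V)$ after choosing the cut-off radius $m\sim1/\de$ and controlling the potential term by a radial Sobolev estimate, on each hyperbolic interval the ejection formula $\si\bK^\om_2+\CK\|\z(t_0)\|\sim d$ yields a gain $\gtrsim\de_X$, while the boundary terms are only $O(\de+m\de^2)\ll\de_X$ -- that quantitative comparison, absent from your sketch, is what rules out the return.

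In the $\si=+1$ sector the gap is more serious. You invoke the small-mass scattering machinery of \cite{NLSP1} and a "sign-locked virial producing monotone exit," but the energy here is \emph{above} the first excited threshold, so the results of \cite{NLSP1} do not apply directly (the scattering statement at this energy level is Lemma \ref{lem:scatafter}, proved \emph{after} and \emph{using} the one-pass lemma, so quoting it here would be circular), and a fixed sign of $\bK^\om_2$ does not make $d_\om$ monotone. Moreover, the paper explicitly notes that the localized-virial route of \cite{NLS} fails in this region because the potential and ground-state component destroy the sign of the localized virial when the solution disperses during the variational times. The actual proof replaces it by a Kenig--Merle minimal-contradiction argument: assume return paths exist with $\de_n\to0$, extract a limiting solution of the rescaled equation, and split into $\OM<\I$ (where a compact trajectory is produced and then killed by the second localized virial \eqref{vir-s}) and the concentrating case $\OM=\I$ (where the DHR classification identifies the limit as $w_+$ and uniform Strichartz perturbation estimates, using $V\in L^2$, contradict the divergence of $\|\x_n\|_{\ST}$ forced by \eqref{stxi bup}). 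None of this machinery appears in your plan, and without it the $\si=+1$ branch does not close; this is where the new analytic work of the lemma lies.
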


The rest of this section is devoted to proving the above lemma. 
The solution $u_\om$ of \eqref{rscNLS} is fixed, so that we can abbreviate $d(t):=d_\om(u_\om)$, but all estimates will be uniform with no dependence on the particular choice of $u_\om$. 

The last sentence of the lemma is obvious from $\de\le\de_*\le\de_X$. 
For a proof of the rest and main part of the lemma, it suffices to derive a contradiction from the following: 
Suppose that for some $t_-<t_+$ within $(T_-,T_+)$, 
\EQ{ \label{return path}
 \pt \max_{t\in[t_-,t_+]}d(t)>\min_{t\in[t_-,t_+]}d(t)=d(t_\pm)=:\de\in(0,\de_*].}
Taking $\de_*\le c_X^{-1/2}\e_S$ 
ensures that $u_\om(t)$ stays in $\ck\cH_\om$ for $t\in[t_-,t_+]$, because 
\EQ{ \label{var const A}
 \bA^\om(u_\om)-\bA^\om(Q_\om)< c_X\de^2\le c_X\de_*^2\le\e_S^2.}
Hence $\si:=\Sg_\om(u_\om(t))\in\{\pm 1\}$ is independent of $t\in[t_-,t_+]$. 

Taking $\de_*\ll\de_V$, decompose the time interval $[t_-,t_+]$ as follows. 
Let $\M$ be the set of all minimal points of $d:[t_-,t_+]\to[\de,\I)$ with the minima less than $\de_V$. 
Then applying the ejection lemma \ref{lem:eject} from each $t_0\in\M$ forward and backward in time, we obtain a closed interval $I(t_0)\subset[t_-,t_+]$ such that $d(t)^2$ is strictly convex on $I(t_0)$ with the unique minimal point $t=t_0$ with $d(t)=\de_X$ on $\p I(t_0)\setminus\{t_\pm\}$, and  
\EQ{
 e^{\al_\om|t-t_0|}d(t_0) \sim d(t) \sim \si\lb_1(t) \sim \si \bK^\om_2(u_\om(t)) + \CK\|\z(t_0)\|_{H^1}}
on $I(t_0)$. 
The convexity on each $I(t_0)$ implies that those intervals are mutually disjoint. Putting 
\EQ{ \label{def IHV}
 I_H:=\Cu_{t_0\in\M}I(t_0), \pq I_V:=[t_-,t_+]\setminus I_H,}
we have $d\in[\de,\de_X]$ on $I_H$ and $d\in[\de_V,\I)$ on $I_V$. 
Lemma \ref{lem:minid} (3) implies that $(\bM+\om\bH^0)(u_\om)>\CM$ on $[t_-,t_+]$, since otherwise $\bH^0(u_\om(t_\pm))<\bH^0(Q)/4<\bH^0(Q_\om)/2$ contradicts that $d(t_\pm)\le\de_*\ll 1$. 
Then the variational lemma \ref{lem:var} implies
\EQ{ \label{var bd IV}
 t\in I_V \implies \si \bK^\om_2(u_\om)\ge\ka_V(\de_V)>0.}
Note that the case (b) of Lemma \ref{lem:var} is also precluded by the proximity to $Q_\om$, which implies $\bE^\om(u_\om)>\bE^\om(Q_\om)/2\ge\de_U/2$. 

\subsection{Blow-up region} \label{ss:bup} 
For $\si=-1$, we use a localized virial as in \cite[\S 4.1]{NLS}, \cite{OT}
\EQ{ \label{def virb}
 \pt \sV_m(t):=\LR{m\phi_mu_\om|i\p_r u_\om}, }
where $\phi(r)$ is a smooth non-decreasing function satisfying
\EQ{
 \phi(r)=\CAS{r &(r\le 1)\\ 3/2 &(r\ge 2),}}
and $\phi_m(r)=\phi(r/m)$ for some cut-off radius $m>1$ to be chosen shortly. 
Using the equation \eqref{rscNLS}, we have
\EQ{ \label{vir-b}
 \dot \sV_m \pt=2\bK^\om_2(u_\om)-\LR{|\p_ru_\om|^2|2f_{0,m}}+\LR{|u_\om/r|^2|f_{1,m}}
 \prQ +\LR{|u_\om|^4|f_{2,m}}+2\ml{f_{0,m}\cS_\I'V^\om}(u_\om),}
where $f_{j,m}(r):=f_j(r/m)$ with 
\EQ{
 f_0:=1-\phi_r, \pq f_1:=-r^2\De(\p_r/2+1/r)\phi, \pq f_2:=3/2-(\p_r/2+1/r)\phi.}
The last term in \eqref{vir-b} is the only essential difference from the case \cite{NLS} without the potential. 
Since $\cS_\I'V\in L^2+L^\I_0$, for any $\y>0$ there exists $B(\y)\in[1,\I)$ and a decomposition $\cS_\I'V=W_2+W_\I$ such that 
\EQ{
 \|W_\I\|_\I \le \y, \pq \|W_2\|_2 \le B(\y),}
cf.~ \cite[Lemma 2.1]{NLS}. Let $W_p^\om:=\om^{-1}W_p(\om^{-1/2}x)$. Then 
\EQ{
 |\ml{f_{0,m}\cS_\I'V^\om}(u_\om)| 
 \pt\le \|W_\I^\om\|_\I\|u_\om\|_{L^2(f_{0,m}dx)}^2 + \|W_2^\om\|_2\|f_{0,m}|u_\om|^2\|_2
 \pr\le \om^{-1}\y\|u_\om\|_{L^2(|x|>m)}^2 + \om^{-1/4}B(\y)\|u_\om\|_{L^4(f_{0,m}dx)}^2,}
where we used that $\supp f_{0,m}\subset\{|x|>m\}$ and $0\le f_{0,m}\le 1$. 
The last $L^4$ norm is treated in the same way as \cite[(4.14)]{NLS} by the radial Sobolev for $\fy\in H^1_r(\R^3)$ 
\EQ{
 \pt\|\fy\|_{L^4(f_{0,m}dx)}^4
 \sim \int_m^\I f_{0,m}'(s)\|\fy\|_{L^4(|x|>s)}^4 ds
 \pr\lec\int_m^\I f_{0,m}'(s)s^{-2}\|\fy\|_{L^2(|x|>s)}^3\|\fy_r\|_{L^2(|x|>s)}ds
 \pr\le m^{-2}\|\fy\|_{L^2(|x|>m)}^3\BR{\int_m^\I f_{0,m}'(s)\|\fy_r\|_{L^2(|x|>s)}^2ds \cdot \int_m^\I f_{0,m}'(s)ds }^{1/2}
 \pr\sim m^{-2}\|\fy\|_{L^2(|x|>m)}^3\|\fy_r\|_{L^2(f_{0,m}dx)}.}
The same estimate applies to the second last term of \eqref{vir-b}, because $|f_{2,m}|\lec f_{0,m}$. 
The norm $\|\p_ru_\om\|_{L^2(f_{0,m}dx)}$ can be absorbed by the second term on the right of \eqref{vir-b} after Young. 
Using also that $\|u_\om\|_{L^2(|x|>m)}\lec \|Q_\om\|_{L^2}+\de\lec 1$, we obtain 
\EQ{
 \dot\sV_m + 2\bK^\om_2(u_\om) 
  \pt\lec \om^{-1}\y + m^{-2}+(\om^{-1/4}m^{-1}B(\y))^{4/3}.}
Hence, choosing $\y$ small and then $m$ large such that 
\EQ{ \label{large cond}
 \y \ll \om_*\ka_V(\de_V), \pq m \gg \max(\ka_V(\de_V)^{-1/2},\om_*^{-1/4}B(\y)\ka_V(\de_V)^{-3/4}),}
we have $\dot \sV_m\le-\ka_V(\de_V)<0$ on $I_V$. 
On each $I(t_0)$ in $I_H$, we have 
\EQ{
 \|u_\om\|_{L^2(|x|>m)} \le \|Q_\om\|_{L^2(|x|>m)}+\|v\|_{L^2(|x|>m)}
 \lec m^{-1}+d(t),}
and so, using the hyperbolic estimate on $\bK^\om_2$ in Lemma \ref{lem:eject} as well, 
\EQ{
 [\si \sV_m]_{\p I(t_0)}=\int_{I(t_0)}\si\dot \sV_m dt
 \pt\gec \int_{I(t_0)} (e^{\al_\om|t-t_0|}-2\CK)d(t_0)-O(m^{-2}) dt
 \pr\gec \de_X-Cm^{-2}\log(\de_X/\de).}
On the other hand, $d(t)=\de$ at $t=t_\pm$ and $\|xQ_\om\|_2+\|x\na Q_\om\|_2\lec 1$ imply 
\EQ{
 |[\sV_m]_{t_-}^{t_+}| \lec \de + m\de^2.}
We can choose $m = 1/\de$ satisfying \eqref{large cond} if $\de_*$ is so small that  
\EQ{
 \de_* \le C^{-1}\ka_V(\de_V)^{1/2}\min(1,(\om_*\ka_V(\de_V))^{1/4}B(C\om_*\ka_V(\de_V))^{-1})}
for some large constant $C\in(1,\I)$. Then we have 
\EQ{
 |[\sV_m]_{t_-}^{t_+}| \lec \de
 \ll \de_X \lec \int_{t_-}^{t_+}\si\dot\sV_m dt,}
which is a contradiction. 
Therefore \eqref{return path} is impossible in the case $\si=-1$.

\subsection{Scattering region}
For $\si=+1$, we could argue as in \cite{NLS}, which would however suffer from the loss of sign in the localized virial due to the potential or the ground states, which happens as the solution is expected to be very dispersed in the variational time $I_V$. 
Specifically, the argument would fail at \cite[(4.31)]{NLS}. 
Then one option to overcome it would be to estimate possible dispersion and propagation along any returning orbit so that we can find an appropriate cut-off radius $m$. 
Instead of that, we rely on the minimal contradiction argument of Kenig-Merle \cite{KM} using the profile decomposition in \cite{NLSP1}, to show that there is a positive lower bound on $\de$ for which the return path \eqref{return path} can exist. 

Let $\om_n\ge\om_\star$ be a sequence such that $\om_n\to\OM\in[\om_\star,\I]$ and let $\ti u_n$ be a sequence of solutions to the rescaled equation \eqref{rscNLS} with $\om=\om_n$ and \eqref{return path} at some $t_{\pm,n}$ with 
\EQ{
 \de_X \gg \de=\de_n \to 0, \pq \Sg_{\om_n}(\ti u_n(t_{\pm,n}))=+1.}
After appropriate translation of each $\ti u_n$ in time, there exist sequences $\ti R_n<0<\ti S_n<\ti T_n$ such that, abbreviating $d_n(t):=d_{\om_n}(\ti u_n(t))$ and $\al_n:=\al_{\om_n}$, 
\EQ{ \label{beh dn}
 \pt d_n(\ti R_n)=\de_n=d_n(\ti T_n), \pq d_n(0)=\de_X=d_n(\ti S_n), 
 \pr \ti R_n\le t\le 0 \implies d_n(t)\sim e^{\al_n t}\de_X, 
 \pr \ti R_n<t<\ti T_n \implies d_n(t)>\de_n,
 \pr \ti S_n\le t\le \ti T_n \implies d_n(t)\sim e^{-\al_n(t-\ti S_n)}\de_X.}
Since $\ti u_n$ stays in $\ck\cH_{\om_n}$ with $\Sg_{\om_n}=+1$, by Lemma \ref{lem:unifbd} it is uniformly bounded in $H^1$ on $[\ti R_n,\ti T_n]$. 
Let $u_n:=\rS_{\om_n}^{-1}\ti u_n(\om_n t)$ be the sequence of unscaled solutions, and 
\EQ{
 (R_n,S_n,T_n):=\om_n^{-1}(\ti R_n,\ti S_n,\ti T_n).}
Since $\om_n\ge\om_\star$, Lemma \ref{lem:smallM} allows us to apply the arguments in \cite{NLSP1} to $u_n$. 
Using the coordinate around the ground solitons as in \cite[(4.9)]{NLSP1}, we can decompose 
\EQ{
 u_n(t)=\Phi[z_n(t)]+\y_n(t)=\Phi[z_n(t)]+R[z_n(t)]\x_n(t)}
such that $\y_n(t)\in\cH_c[z_n(t)]$ and $\x_n(t)\in P_c(H^1_r)$ for $t\in[R_n,T_n]$, where 
\EQ{ \label{def HcRz}
 \cH_c[z]:=\{\fy\in H^1_r\mid 0=\LR{i\fy|\p_z\Phi[z]}=\LR{i\fy|\p_{\ba z}\Phi[z]}\}, \pq R[z]=(P_c|_{\cH_c[z]})^{-1}.} 
Let $C_6>0$ be the best constant such that $\inf_\te\|e^{i\te}Q_\om-\fy\|_6\le C_6d_\om(\fy)$ holds for all $\om\ge\om_*$ and $\fy\in H^1_r$, and let 
\EQ{
 \de_W:=\inf\{\|e^{i\te}Q_\om-\rS_\om\Phi[z]\|_6/C_6 \mid \om\ge\om_*,\ \te\in\R,\ z\in Z_*\}.}
Then $\de_W>0$ because both $\{e^{i\te}Q_\om\}_{\om\ge\om_*,\te\in\R}$ and $\{\rS_\om\Phi[z]\}_{z\in Z_*}$ are precompact in $H^1$ and the normand is never $0$ even on their closures. 
Hence for large $n$, there exists $\ti S_n'\in[\ti S_n,\ti T_n]$ such that 
$d_n(\ti S_n')=\min(\de_X,\de_W/2)$. 
Then using the scale invariance of $\ST=L^4_tL^6$, we obtain 
\EQ{ \label{stxi bup}
 \|\x_n\|_{\st(S_n,T_n)}\pt\sim\|\y_n\|_{\st(S_n,T_n)}\ge\|\ti u_n-\rS_{\om_n}\Phi[z_n(t/\om_n)]\|_{\ST(\ti S_n',\ti T_n)} 
 \pr\ge C_6|\ti T_n-\ti S_n'|^{1/4}(\de_W-d_n(\ti S_n'))
 \pr\sim C_6\de_W \log^{1/4}(\de_W/\de_n)\to \I\pq (n\to\I),}
because of the exponential behavior on $[S_n,T_n]$ in \eqref{beh dn}. 
Similarly we have $\ti R_n\lec-\log(\de_X/\de_n)\to-\I$ and $\ti T_n\gec\log(\de_X/\de_n)\to\I$. 
Since $\ti u_n$ are uniformly bounded in $C([\ti R_n,\ti T_n];H^1_r)$, a standard weak compactness argument implies that, passing to a subsequence, $\ti u_n$ converges to some $\ti u_\I$ in $C(\R;\weak{H^1_r})\cap L^\I(\R;H^1)$, which solves the limit equation, that is \eqref{rscNLS} with $\om=\OM<\I$ or \eqref{NLS} if $\OM=\I$. 

The weak convergence implies 
$\bE^\OM(\ti u_\I) \le \bE^\OM(Q_\OM)$ and $\bM(\ti u_\I) \le \bM(Q)$, 
as well as $d_\OM(\ti u_\I(t)) \lec e^{\al_\OM t}\de_X$ for all $t<0$, hence by the conservation law
\EQ{ \label{lim ME}
 (\bM,\bE^\OM)(\ti u_\I)=(\bM,\bE^\OM)(Q_\OM).} 
Therefore the convergence $\ti u_n\to \ti u_\I$ is strong in $H^1$, locally uniformly in $t$. 

\subsubsection{The case of bounded time frequency $\OM<\I$} \label{onepass finiteom} 
In this case, the above strong convergence is translated to that of $u_n$ to 
$u_\I(t):=\rS_\OM^{-1}\ti u_\I(\OM t)$. 
Apply the profile decomposition in \cite[\S 5--7]{NLSP1} to $\x_n$ on $[0,T_n]$. 
Then the strong convergence of $u_n(0)$ implies that there is only one nonlinear profile, which is the strong limit at $t=0$, and the remainder is strongly vanishing in $H^1$. 
Hence \cite[Theorem 7.2]{NLSP1} and \eqref{stxi bup} imply that $u_\I$ does not scatter to $\Phi$ as $t\to\I$.  

Then the main result of \cite{NLSP1} below $\Soli_1$ together with \eqref{lim ME} implies that $u_\I$ is a minimal non-scattering solution, so the argument in \cite[\S 8]{NLSP1} implies that $u_\I(0,\I)\subset H^1$ is precompact. Since 
\EQ{
 d_\I(t):=d_{\OM}(\ti u_\I(t))}
is exponentially decaying as $t\to-\I$, the trapping lemma \ref{lem:stay} implies that $\ti u_\I(t)=e^{i(a-t)}Q_\OM+o(1)$ in $H^1$ as $t\to-\I$ for some $a\in\R$. 
In particular, the entire trajectory $\ti u_\I(\R)$ is precompact in $H^1$. 

Now we use another localized virial identity as in \cite[\S 4.2]{NLS}
\EQ{ \label{vir-s}
 \pt \sV_m:=\LR{\psi_m \ti u_\I|i\cS_2'\ti u_\I}, 
 \pr \dot\sV_m=2\bK^\OM_2(\psi_m\ti u_\I)+\LR{|\ti u_\I/m|^2|f_{3,m}}-\LR{|\ti u_\I|^4|f_{4,m}}
 \prQ -2m^{-1}\ml{r\cS_\I'V^\OM}(\psi_m\ti u_\I),}
where $\psi_m(r)=\psi(r/m)$ and $f_{j,m}(r)=f_j(r/m)$ as before, with 
\EQ{
 \psi:=(1+r)^{-1}, \pq f_3:=(1+r)^{-4}, \pq f_4:=(1+r)^{-4}r(r^2+7r/2+4).}
The last term in \eqref{vir-s} is bounded by 
\EQ{
 |m^{-1}\ml{r\cS_\I'V^\OM}(\psi_m\ti u_\I)|
 \lec \ml{\min(r/m,m/r)|\cS_\I'V^\OM|}(\ti u_\I).}
Combining the precompactness with the above estimate, as well as the decay of $f_{3,m}$ and $f_{4,m}$, yields some $m \ge 1$ such that for all $t\in\R$ 
\EQ{ \label{precomp est}
 |\dot\sV_m-2\bK^\OM_2(\ti u_\I)| \ll \ka_V(\de_V).}
Let $0<\de\ll\min(1/m,\de_V)$, $d_\I(t_-)=\de$ with $t_-<0$ and 
\EQ{ 
 t_+:=\inf\{t>t_-\mid d_\I(t)\le\de\},}
then $t_+>t_-$ because $\p_td_\I(t_-)>0$ by the ejection lemma \ref{lem:eject}. 
Suppose that $t_+<\I$ for contradiction. 
Decompose $[t_-,t_+]=I_H\cup I_V$ as in \eqref{def IHV}.  Using the exponential decay of $Q_\OM$, we have on each $I(t_0)$ 
\EQ{
 \dot \sV_m = 2\bK^\OM_2(\ti u_\I) + O(m^{-1}+d_\I^2)
 \gec (e^{\al_\OM|t-t_0|}-2\CK)d_\I(t_0)-O(m^{-1}),}
and so
\EQ{
 [\sV_m]_{\p I(t_0)}\gec \de_X - Cm^{-1}\log(\de_X/\de)
 \ge \de_X(1-\de/\de_X\log(\de_X/\de))>\de_X/2.}
On the other hand, \eqref{var bd IV} and \eqref{precomp est} imply that $\dot\sV_m\ge \ka_V(\de_V)>0$ on $I_V$. 
Hence 
\EQ{
 \de_X \lec [\sV_m]_{t_-}^{t_+} \lec \de + m\de^2 \lec \de \ll \de_X,}
leading to a contradiction. 
Therefore $t_+=\I$, which implies however that $\sV_m\to\I$ as $t\to\I$ by the above argument on $[t_-,\I)$, contradicting the precompactness of $\ti u_\I(\R)$. 
Thus we have precluded the case $\OM<\I$. 

\subsubsection{The concentrating case $\OM=\I$}
In this case, we have $Q_\OM=Q$, and the limit $\ti u_\I$ is a global solution of \eqref{NLS} exponentially convergent to $\{e^{i\te}Q\}_\te$ as $t\to-\I$. 
Then the classification by \cite{DHR} implies that $\ti u_\I$ is, modulo time translation, either the soliton $e^{-it}Q$ itself or the unique solution $w_+$ which is exponentially converging to $e^{-it}Q$ as $t\to-\I$ and scattering to $0$ as $t\to+\I$. 
The strong convergence at $t=0$ implies $d_\I(\ti u_\I(0))=\de_X>0$, precluding the soliton case. Hence $\ti u_\I=w_+$. 
If $\ti S_n$ converges to some finite $\ti S_\I<\I$ along a subsequence, then $d_\I(\ti u_\I(t)) \lec e^{-\al(t-\ti S_\I)}\de_X$ for $t\ge\ti S_\I$, contradicting the scattering to $0$ of $\ti u_\I$ as $t\to\I$. 
Hence $\ti S_n\to\I$. 

The scattering to $0$ implies that for any $\nu>0$ there exists $\ti\ta>0$ such that 
\EQ{
 \|e^{-i(t-\ti\ta)\De}\ti u_\I(\ti\ta)\|_{\st(\ti\ta,\I)}\le\nu.}
Then, putting $\ta_n:=\om_n^{-1}\ti\ta$ and using $\ti u_n(\ti\ta)\to \ti u_\I(\ti\ta)$ in $H^1$, we obtain 
\EQ{
 \|e^{-i(t-\ta_n)\De}u_n(\ta_n)\|_{\st(\ta_n,\I)}
 \pt=\|e^{-i(t-\ti\ta)\De}\ti u_n(\ti\ta)\|_{\st(\ti\ta,\I)}
 \pr=\|e^{-i(t-\ti\ta)\De}\ti u_\I(\ti\ta)\|_{\st(\ti\ta,\I)}+o(1)<2\nu}
for large $n$. 
We also have uniform bounds  
\EQ{ \label{Hs bd}
 \|u_n(\ta_n)\|_{H^\te} \lec \om_n^{(\te-1/2)/2}\|\ti u_n(\ti\ta)\|_{H^\te} \lec \om_n^{(\te-1/2)/2}} 
for $0\le\te\le 1$, hence in particular, 
\EQ{
 |z_n(\ta_n)| \lec \|u_n(\ta_n)\|_2 \lec \om_n^{-1/4}.}
Let $\x_n\zr:=e^{-i(t-\ta_n)\De}\x_n(\ta_n)$. 
Using 
\EQ{
 \x_n(\ta_n)=P_c(u_n(\ta_n)-\Phi[z_n(\ta_n)]),}
and 
\EQ{
 |(\phi_0|u_n(\ta_n)-\Phi[z_n(\ta_n)])|\lec \|u_n(\ta_n)\|_2+|z_n(\ta_n)| \lec \om_n^{-1/4},}
we have by the free Strichartz estimate, 
\EQ{ \label{small xn0}
 \pt\|\x_n\zr\|_{\st(\ta_n,\I)}
  \le \|e^{-i(t-\ta_n)\De}u_n(\ta_n)\|_{\st(\ta_n,\I)} + C\om_n^{-1/4} <3\nu,
 \pr\|\x_n\zr\|_{L^4_tL^3(\ta_n,\I)} \lec \|\x_n(\ta_n)\|_2 \lec  \om_n^{-1/4}  \ll \nu,}
for large $n$. 
Let $\x_n\on$ be the linearized solution with the same initial data, namely
\EQ{
 (i\p_t+H-B[z_n])\x_n\on=0,\pq \x_n\on(\ta_n)=\x_n(\ta_n)\in P_c(H^1_r),}
where $B[z]$ is the $\R$-linear operator defined by 
\EQ{ \label{def Bz}
 B[z]\fy=P_c\{2|\Phi[z]|^2R[z]\fy+\Phi[z]^2\ba{R[z]\fy}\}.}
Then we have 
\EQ{ \label{free2lin}
 (i\p_t+H-B[z_n])P_c\diff\x_n\pa = -P_cV\x_n\zr+B[z_n]P_c\x_n\zr, \pq \diff\x_n\pa(\ta_n)=0.}
Applying the non-admissible Strichartz \cite[(4.41)]{NLS} to \eqref{free2lin}, we obtain
\EQ{ \label{unif scat}
 \|P_c\diff\x_n\pa\|_{\st(\ta_n,\I)}
 \pt\lec \|V\x_n\zr-B[z_n]P_c\x_n\zr\|_{L^4_tL^{6/5}(\ta_n,\I)}
 \pr\lec \BR{\|V\|_{L^2}+|z_n|}\|\x_n\zr\|_{L^4_tL^3(\ta_n,\I)}\ll \nu. }
Adding it to \eqref{small xn0}, and using $\x_n\on=P_c\x_n\on$, we obtain 
\EQ{
 \|\x_n\on\|_{\st(\ta_n,\I)}<4\nu}
for large $n$. 
Hence, taking $\nu>0$ small, and using \eqref{Hs bd} as well, we deduce from the small data scattering \cite[Lemma 6.2]{NLSP1} that $u_n$ scatters to $\Phi$ as $t\to\I$ with a uniform bound for large $n$
\EQ{
 \|\x_n\|_{\st(\ta_n,\I)} \lec  \nu.}
Since $\ti S_n\to\I$ implies $\ta_n<S_n$ for large $n$, the above bound contradicts \eqref{stxi bup}. 
Therefore $\OM=\I$ is also impossible, which means that there can not exist such a sequence of solutions $\ti u_n$ in \eqref{beh dn}. 
Thus we finish the proof of Lemma \ref{lem:one pass}. \qedsymbol
\begin{rem}
The additional assumption $V\in L^2(\R^3)$ is needed only in the above estimate \eqref{unif scat} and similarly in \eqref{nonlin est 0mas scat}. It could be replaced with the following statement: For any bounded sequence $\fy_n$ in $\dot H^{1/2}(\R^3)$, we have 
\EQ{
 \|e^{-it\De}\fy_n\|_{\st(0,\I)}+\|\fy_n\|_2 \to 0 \implies \|e^{-itH}P_c\fy_n\|_{\st(0,\I)}\to 0,}
$V\in L^2(\R^3)$ is a sufficient condition, as shown above by the Strichartz perturbation, but the latter does not work if we merely assume $V\in L^2+L^\I_0$. 
\end{rem}

\section{Dynamics away from the excited states}
The one-pass lemma \ref{lem:one pass} ensures that if a solution of the rescaled NLS \eqref{rscNLS} leaves the small neighborhood of $\cQ_\om$, then it never returns. 
In this section, we investigate behavior of such solutions $u_\om$ staying away from $\cQ_\om$, after some time or for all time. 
More precisely, let $u_\om$ be any solution of \eqref{rscNLS} for some $\om\ge\om_\star$ satisfying 
\EQ{
 \bA^\om(u_\om)\le\bA^\om(Q_\om)+c_X\de_*^2, 
 \pq \inf_{0\le t<T_+}d_\om(u_\om(t))\ge\de_*,}
where $T_+\in(0,\I]$ is the maximal existence time of $u_\om$. 
Thanks to \eqref{var const A}, we have the same decomposition of $[0,T_+)=I_H\cup I_V$ as in \eqref{def IHV} with $\de=\de_*$, and the sign $\si:=\Sg_\om(u_\om(t))\in \{\pm 1\}$ remains constant for $t\in[0,T_+)$, which distinguishes the scattering and the blow-up cases. 

\subsection{Blow-up region}
In the case $\si=-1$, the solution blows up. 
\begin{lem} \label{lem:bupafter}
For every $\om\ge\om_*$ and every solution $u_\om$ of \eqref{rscNLS} satisfying 
\EQ{ \label{bupafter cond}
 \pt \bA^\om(u_\om)\le \bA^\om(Q_\om)+c_X\de_*^2, 
 \pq \inf_{0\le t<T_+}d_\om(u_\om(t))\ge\de_*, \pq \Sg_\om(u_\om(0))=-1,}
where $T_+$ is the maximal existence time, blows up in finite time, namely $T_+<\I$. 
\end{lem}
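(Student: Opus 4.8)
\emph{Proof strategy.} The plan is to argue by contradiction, assuming $T_+=\I$, and to force finite‑time blow‑up of a localized virial functional, in the spirit of Section~\ref{ss:bup} of the one‑pass lemma but run forward on all of $[0,\I)$ rather than between two successive close approaches to $\cQ_\om$. First I would fix the sign and the coarse geometry of the orbit. Since $\bA^\om(u_\om)$ is conserved and, by $\de_*\le c_X^{-1/2}\e_S$ together with the hypothesis $d_\om(u_\om(t))\ge\de_*$, one has $\bA^\om(u_\om(t))-\bA^\om(Q_\om)<c_X\de_*^2\le\min(\e_S^2,c_Xd_\om(u_\om(t))^2)$, the solution stays in $\ck\cH_\om$, so $\Sg_\om(u_\om(t))\equiv\Sg_\om(u_\om(0))=-1$ by continuity of $\Sg_\om$; in particular $\CS\bM\bH^0(u_\om(t))>1$ by Lemma~\ref{lem:sign}(i), hence $\bH^0(u_\om(t))\gec 1$ and $(\bM+\om\bH^0)(u_\om(t))>\CM$ for $\om\ge\om_*$. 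Then, exactly as in the one-pass lemma, I would write $[0,\I)=I_H\cup I_V$ with $I_H=\bigcup_{t_0\in\M}I(t_0)$ over the minimal points $t_0$ of $t\mapsto d_\om(u_\om(t))$ whose minimum is below $\de_V$: on each $I(t_0)$ the ejection lemma~\ref{lem:eject} gives $-\bK^\om_2(u_\om(t))+\CK\|\sZ^\om(u_\om(t_0))\|_{H^1}\sim d_\om(u_\om(t))\sim e^{\al_\om|t-t_0|}d_\om(u_\om(t_0))$ with $d_\om=\de_X$ at the endpoints and $|I(t_0)|\lec\al_\om^{-1}\log(\de_X/\de_*)$ bounded, while on $I_V$ we have $d_\om\ge\de_V$ and Lemma~\ref{lem:var} applies, its case~(b) being excluded because it would give $\bK^\om_2>0$, contradicting $\Sg_\om=\sign\bK^\om_2=-1$ (Lemma~\ref{lem:sign}(iii)); hence $\bK^\om_2(u_\om)\le-\ka_V(\de_V)<0$ on $I_V$.

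Next I would run the localized virial $\sV_m(t)=\LR{m\phi_mu_\om|i\p_ru_\om}$ of Section~\ref{ss:bup}, for which $\dot\sV_m=2\bK^\om_2(u_\om)+(\mathrm{error})$ with the error bounded --- uniformly in $\om\ge\om_*$ and in the solution, using $\bM(u_\om)\lec1$, the radial Sobolev inequality and the splitting $\cS_\I'V=W_2+W_\I$ --- by $\om^{-1}\y+m^{-2}+(\om^{-1/4}m^{-1}B(\y))^{4/3}$. Fixing $\y$ small and then $m$ large so that this is $<\ka_V(\de_V)/4$, I obtain $\dot\sV_m\le-\ka_V(\de_V)$ on $I_V$, while on each $I(t_0)$ the exponentially large negative excursion of $\bK^\om_2$ dominates its possible $O(\de_V)$ positivity near $t_0$, so that, taking $\de_V$ so small that $\de_V\log(\de_X/\de_V)\ll\de_X$, $[\sV_m]_{\p I(t_0)}\le 2\int_{I(t_0)}\bK^\om_2(u_\om)\,dt+(\mathrm{small})\le-c\,\de_X$. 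Since each $|I(t_0)|$ is bounded, summation yields $\sV_m(T)\le\sV_m(0)-c_2T$ for all $T>0$, with $c_2>0$ depending only on $\de_*,\de_V,\de_X,\ka_V,\om_*$; hence $\sV_m\to-\I$.

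To convert this into finite-time blow-up I would combine $2\bK^\om_2=6\bE^\om-2\bH^0-2\ml{\cS_\I'V^\om}-6\ml{V^\om}$, with $\bE^\om(u_\om)$ conserved and $\le\bA^\om(u_\om)\lec1$ and the potential terms $\lec\om^{-1/4}(\bM+\bH^0)(u_\om)$, with the error bound to get $\dot\sV_m\le C_1-\bH^0(u_\om(t))$ for $\om\ge\om_*$; using also $|\sV_m(t)|\le\frac32m\|u_\om\|_2\|\p_ru_\om\|_2\lec m\sqrt{\bH^0(u_\om(t))}$, i.e.\ $\bH^0(u_\om(t))\gec\sV_m(t)^2/m^2$, one arrives at $\dot\sV_m\le C_1-c_3\sV_m^2/m^2$. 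Once $\sV_m$ drops below $-m\sqrt{2C_1/c_3}$ --- which it does at a finite time by the previous step --- this gives $\dot\sV_m\le-\frac12c_3\sV_m^2/m^2$ and hence $\sV_m(t)\to-\I$ at some finite $t^*$, so $\|\na u_\om(t)\|_2\to\I$ as $t\to t^{*-}$, contradicting $T_+=\I>t^*$. Therefore $T_+<\I$.

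The step I expect to be the main obstacle is the bookkeeping in the second paragraph: the short hyperbolic passes near $\cQ_\om$, where $\bK^\om_2(u_\om)$ may be slightly positive, must not spoil the net monotonicity of $\sV_m$, which forces one to exploit the distance gap $\de_V\ll\de_X$ between the ejection and trapping scales together with the exponential lower bound on $d_\om$ --- hence on $-\bK^\om_2$ --- from the ejection lemma, and at the same time to control the potential-dependent error terms in $\dot\sV_m$ uniformly in $\om$. A secondary point requiring care is the verification that $(\bM+\om\bH^0)(u_\om)>\CM$ throughout $[0,\I)$, so that Lemma~\ref{lem:var} and Lemma~\ref{lem:sign}(iii) are genuinely available on $I_V$; this follows from $\Sg_\om(u_\om)=-1$ via Lemma~\ref{lem:sign}(i), since otherwise $\bH^0(u_\om)$ would be too small, together with the largeness of $\om$.
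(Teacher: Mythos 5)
Your proposal is correct and follows essentially the route the paper intends: for this lemma the paper only says the proof is the localized virial estimate of Section \ref{ss:bup} run as in \S 4.1 of \cite{NLS}, which is exactly what you reconstruct (persistence of $\Sg_\om=-1$ in $\ck\cH_\om$, the $[0,T_+)=I_H\cup I_V$ splitting with the ejection lemma on $I_H$ and Lemma \ref{lem:var} on $I_V$ with case (b) excluded, the net linear decay of $\sV_m$, and finally the Ogawa--Tsutsumi type ODE step \cite{OT} giving finite-time divergence). One small correction: $\bM(u_\om)$ is \emph{not} uniformly bounded on this region (the paper remarks it is unbounded in $L^2$), but since the mass is conserved you may simply let $\y$, $m$ and the constants in $\dot\sV_m\le C_1-c_3\sV_m^2/m^2$ depend on the given solution's mass --- or note that $\bM(u_\om)\lec 1$ automatically whenever $I_H\neq\emptyset$ --- and your argument goes through unchanged.
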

Using the localized virial estimate in Section \ref{ss:bup}, the proof is essentially the same as \cite[\S 4.1]{NLS} in the case without the potential, because the region $\Sg_\om=-1$ is away from the ground states, where $\bK^\om_2$ degenerates. The detail is omitted. 

Also note that $\om\ge\om_*$ is enough in this region, since the profile decomposition in \cite{NLSP1} is not needed. In fact, the region of $u_\om(0)$ in the above lemma is unbounded in $L^2(\R^3)$, though it is not essentially new compared with \cite{NLSP1}, since those initial data with large $L^2$ need very negative energy $\bE$ to satisfy \eqref{bupafter cond}, for which proving the blow-up is easier. 

\subsection{Scattering region}
In the case $\si=+1$, $u_\om$ scatters to $\Phi$ as $t\to\I$. 
The global existence is immediate from the $H^1$ bound of Lemma \ref{lem:unifbd}, so the main part is to prove the scattering. 
As in \cite{NLS}, it is done by using the profile decomposition. 
In the same way as in the previous section, we need to distinguish between the case of bounded $\om$ and the case of $\om\to\I$. 

For each $\om\ge\om_\star$ and $A\le c_X\de_*^2$, let $\FS_\om(A)$ be the set of all the solutions $u$ of \eqref{NLSP} global in $t>0$ satisfying 
\EQ{ \label{scat after region}
 \bA^\om(u_\om)\le \bA^\om(Q_\om)+A, \pq \inf_{t\ge 0}d_\om(u_\om(t))\ge \de_*,}
where $u_\om(t):=\rS_\om u(t/\om)$ is the rescaled solution of \eqref{rscNLS}. 
In the original scale, the first condition is equivalent to, putting $\mu:=\bM(\Psi[\om])$,  
\EQ{ \label{Aom bd FS}
 \bA_\om(u) \le \sE_1(\mu)+\om\mu+\om^{1/2}A = \om^{1/2}(\bA(Q)+A+O(\om^{-1/4})).}

Now we look for a minimal solution in $\FS_\om(A)\setminus\cS$. 
Note that \eqref{scat after region} with $A\le c_X\de_*^2$ implies that $u_\om=\rS_\om u(t/\om)$ stays in $\ck\cH_\om$ for all $t\ge 0$, and the case $\Sg=-1$ is precluded by the blow-up Lemma \ref{lem:bupafter}. 
Hence Lemma \ref{lem:smallM} with $\om\ge\om_\star$ allows us to apply the arguments in \cite{NLSP1} to any solution $u$ in $\FS_\om(A)$. 
Using the decomposition $u(t)=\Phi[z(t)]+R[z(t)]\x(t)$ of \cite[Lemma 4.1]{NLSP1}, let 
\EQ{ \label{def A*}
 \pt \STN^\om(A):=\sup_{u\in\FS_\om(A)} \|\x\|_{\ST(0,\I)}, 
 \pr A^*_\om :=\sup\{A<c_X\de_*^2 \mid \STN^\om(A)<\I\},
 \pq A^*:=\inf_{\om\ge\om_\star}A^*_\om.}
Since the region $\{(\mu,e)\in\R^2\mid e+\om\mu < \bA_\om(\Psi[\om])\}$ 
is tangent from below to the graph $e=\sE_1(\mu)$ at $\mu=\bM(\Psi[\om])$ because of $\sE_1''>0$, 
the region $\bA^\om(u_\om)<\bA^\om(Q_\om)$ for $\om\ge\om_\star$ is covered by the scattering below $\Soli_1$ in \cite{NLSP1}. 
Hence  
\EQ{ \label{scat below}
 A_\om^* \ge 0}
for each $\om\ge\om_\star$. 
Now suppose for contradiction that 
\EQ{
 A^* \ll c_X\de_*^2.} 
Then there exist sequences $\om_n\ge\om_\star$, $A_n>0$, and $u_n\in\FS_{\om_n}(A_n)$ such that 
\EQ{
 \pt \om_n\to\OM\in[\om_\star,\I],\pq c_X\de_*^2>A_n \to A^*, 
 \pr  u_n(t) = \Phi[z_n(t)] + R[z_n(t)]\x_n(t), \pq 
 \|\x_n\|_{\ST(0,\I)}\to\I.}
Let 
\EQ{
 \ti u_n(t):=\rS_{\om_n}u_n(t/\om_n)}
Since $\Sg_{\om_n}(\ti u_n)=+1$, Lemma \ref{lem:unifbd} implies that $\ti u_n(t)$ is uniformly bounded in $H^1$. 
By Lemmas \ref{lem:eject} and \ref{lem:one pass}, we may additionally impose, after translation in time, 
\EQ{
 d_{\om_n}(\ti u_n(0))\ge\de_X,}
since if it cannot be achieved by translation, then the trapping lemma \ref{lem:stay} applies to $\ti u_n$, contradicting $u_n\in\FS_{\om_n}(A_n)$ with $A_n< c_X\de_*^2$.

\subsubsection{The case $\OM<\I$} 
Apply the nonlinear profile decomposition of \cite{NLSP1} to the sequence of solutions $u_n$ on the time interval $[0,\I)$. Here the procedure is outlined for the sake of notation. Let (after extracting a subsequence)
\EQ{ \label{linear prof decop}
 \x_n^L = \sum_{0\le j<J}\la_n^j + \ga_n^J}
be the linearized profile decomposition of \cite[Lemma 5.3]{NLSP1}, where $\x_n^L$, $\la_n^j$ and $\ga_n^J$ solve the same linearized equation 
\EQ{
 (i\p_t+H-B[z_n])\x=0, }
with the initial conditions $\x_n^L(0)=\x_n(0)$ and $\la_n^j(s_n^j)=\weak\Lim_{m\to\I}\x_m^L(s_m^j)$ for some time sequences $s_n^j\in[0,\I)$ satisfying $s_n^0=0$ and $s_n^j-s_n^k\to\pm\I$ as $n\to\I$ for $0\le j<k<J$. 
After fixing $J$ large enough, let $\La_n^j$ be the nonlinear profiles, defined by the weak limit 
\EQ{ \label{def La}
 \x_\I^j(t)=\weak\Lim_{n\to\I}\x_n(t+s_n^j), \pq \La_n^j:=\x_\I^j(t-s_n^j),}
after passing to a further subsequence if necessary. 
Then by \cite[Theorem 7.2]{NLSP1}, there exists at least one nonlinear profile which does not scatter as $t\to\I$, since otherwise $\|\x_n\|_{\st(0,\I)}$ would be bounded. 
Let $\x_\I^l$ be a nonlinear profile with $\|\x_\I^l\|_{\st(0,\I)}=\I$ which is minimal in the sense that if $s^j_n-s^l_n\to-\I$ then the nonlinear profile $\x^j_\I$ scatters as $t\to\I$. 
Let $\mu_\I:=\bM(\Psi[\OM])$, 
\EQ{
 u_\I^l(t):=\weak\lim_{n\to\I}u_n(t+s^l_n),\pq 
 \ti u_\I^l(t):=\rS_\OM u^l_\I(t/\OM).} 
Then $\ti u_\I^l$ is a solution of \eqref{rscNLS} with $\om=\OM$, and the weak convergence $u_n(s^l_n)\weakto u^l_\I(0)$ in $H^1_r$ implies
\EQ{ \label{ene prof}
 \bA^\OM(\ti u^l_\I)\le \liminf_{n\to\I}\bA^{\om_n}(\ti u_n)=\bA^\OM(Q_\OM)+A^*.}

Suppose that $d_\OM(\ti u^l_\I(\ta))\ll\de_*$ at some $\ta\in\R$ ($\ta\ge 0$ if $l=0$), and put 
\EQ{
 t_n:=\om_n(s^l_n+\OM^{-1}\ta),\pq \fy_n:=\ti u_n(t_n)-\ti u^l_\I(\ta),} 
then $\fy_n\to 0$ weakly in $H^1_r$. 
Since $d_{\om_n}(\ti u_n(t_n))\gec \de_*\gg d_{\OM}(\ti u^l_\I(\ta))$, we deduce $\|\fy_n\|_{H^1}\gec\de_*$, then \eqref{ene prof} with the weak convergence implies 
\EQ{
 \bA^\OM(Q_\OM)+A^* \ge \bA^\OM(\ti u^l_\I)+\lim_{n\to\I}\|\fy_n\|_{H^1}^2/2 \ge \bA^\OM(Q_\OM)+c\de_*^2,}
for some constant $c>0$, contradicting $A^*\ll\de_*^2$. 

Therefore $\de_*\lec d_\OM(\ti u^l_\I(t))$ at all $t$, and by Lemma \ref{lem:stay}, some translate of $\ti u^l_\I$ in $t$ belongs to $\FS_{\OM}(A^*)$. 
Since $\|\x^l_\I\|_{\st(0,\I)}=\I$, the definition of $A^*$ implies that \eqref{ene prof} must be equality, hence $u_n(s^l_n)\to u^l_\I(0)$ strongly in $H^1$, thus we have obtained a minimal element $u(t):=u_\I^l(t+T)$ for some $T\ge 0$, satisfying 
\EQ{
 u=\Phi[z]+R[z]\x\in\FS_\OM(A^*), \pq \|\x\|_{\st(0,\I)}=\I.}

Next we prove that for such a critical element $u\in\FS_\om(A^*)$ with $\|\x\|_{\ST(0,\I)}=\I$, the orbit $\{u(t)\mid t\ge 0\}$ is precompact in $H^1_r$. 
For any sequence $t_n\to\I$, the same argument as above applies to the sequence of solutions $u_n:=u(t+t_n)$ on $[-t_n,0]$ and on $[0,\I)$, because $\|\x\|_{\ST(0,t_n)}\to\I$ and $\|\x\|_{\ST(t_n,\I)}=\I$ as $n\to\I$.  
Consider the profile decomposition with the first nonlinear profile 
\EQ{
 \x_\I^0(t)=\weak\Lim_{n\to\I}\x_n(t)=\weak\Lim_{n\to\I}\x(t+t_n),} 
after extracting a subsequence. 
Let $u_\I^0$ be the weak limit of $u_n(t)=u(t+t_n)$. 

If $\x_\I^0$ is not scattering as $t\to\I$, then it is a non-scattering profile on $[0,\I)$, and the minimality as above implies strong convergence of $u_n(0)=u(t_n)$ in $H^1$. 
If $\x_\I^0$ is not scattering as $t\to-\I$, then the same argument for $t<0$ implies the same strong convergence. 

Suppose that $\x_\I^0$ is scattering both as $t\to\pm\I$. The scattering implies that $\bA_\OM(u_\I^0)\ge 0$, because $\bA_{\Om[z]}(\Phi[z])>0$ for $z\in Z_*$, which follows from 
\EQ{
 \p_\om\bA_\om(\Phi_\om)=\LR{\bA_\om'(\Phi_\om)|\Phi_\om'}+\bM(\Phi_\om)=\bM(\Phi_\om)>0,}
where $\Phi_\om:=\Phi[\Om|_{[0,z_*)}^{-1}(\om)]$ is the unique positive radial solution of \eqref{rsNLS}. 

Moreover, there are non-scattering profiles in $t>0$ and in $t<0$. 
More precisely, there is a profile $\x_\I^{l_0}$ with $s_n^{l_0}\to\I$ and $\|\x_\I^{l_0}\|_{\ST(0,\I)}=\I$, and another profile $\x_\I^{l_1}$ with $-t_n<s_n^{l_1}\to-\I$ and $\|\x_\I^{l_1}\|_{\ST(-\I,0)}=\I$, both satisfying $\x_\I^{l_j}(-s_n^{l_j})=\la_n^{l_j}(0)+o(1)$ in $H^1$. 
The last two properties, together with the scattering below the excited states (cf.~the argument for \eqref{scat below}), imply that, as $n\to\I$,
\EQ{
 (\OM^{-1/2}\bH^0+\OM^{1/2}\bM)(\la^{l_j}_n(0))=\OM^{-1/2}\bA_\OM(u^{l_j}_\I)+o(1) \ge \bA^\OM(Q_\OM)+o(1).}
Then the asymptotic orthogonality \cite[(7.12)]{NLSP1} of the mass-energy implies 
\EQ{
 \bA^\OM(Q_\OM)+A^* \pt=\bA^\OM(u_\OM) = (\OM^{-1/2}\bE+\OM^{1/2}\bM)(u) 
 \pr\ge \OM^{-1/2}\bA_\OM(u^0_\I) + \sum_{j=0,1}(\OM^{-1/2}\bH^0+\OM^{1/2}\bM)(\la_n^{l_j}(0))+o(1)
 \pr\ge 2\bA^\OM(Q_\OM) + o(1),}
which is a contradiction because $\bA^\OM(Q_\OM)\sim 1 \gg A^*$. 

Therefore $u(t_n)=u_n(0)$ should be strongly convergent (along a subsequence), which means that $\{u(t)\}_{t\ge 0}$ is precompact. 
Then the same virial argument as in Section \ref{onepass finiteom} leads to a contradiction, so the case $\OM<\I$ is precluded. 
 Note that in using the variational lemma \ref{lem:var}, we can eliminate the cases $(\bM+\om\bH^0)(u_\om)\le\CM$ and (b) using the scattering by Lemma \ref{lem:minid} (3) and by \cite{NLSP1} respectively (instead of using the proximity to $Q_\om$ as in Section \ref{onepass finiteom}). 

\subsubsection{The case $\OM=\I$} 
In this case, we apply the profile decomposition of the NLS without potential to the rescaled radiation. Decompose $u_n=\Phi[z_n]+\y_n$ and rescale by $\rS_n:=\rS_{\om_n}$, naming 
\EQ{
 \pt \ti z_n(t):=z_n(t/\om_n),\pq V_n:=V^{\om_n}, 
 \pq \ti\y_n:=\rS_n\y_n(t/\om_n),
 \pq \ti\Phi_n:=\rS_n\Phi[\ti z_n].}
The soliton component is uniformly tending to $0$ as 
\EQ{
 |\ti z_n(t)| \lec \|u_n(t/\om_n)\|_2 = \om_n^{-1/4}\|\ti u_n(t)\|_2 \lec \om_n^{-1/4}.}
The equation for $\ti\y_n$ can be written as 
\EQ{
 eq_n(\ti\y_n) \pt= 2|\ti\Phi_n|^2\ti\y_n+\ti\Phi_n^2\ba{\ti\y_n}+2\ti\Phi_n|\ti\y_n|^2+\ba{\ti\Phi_n}\ti\y_n^2
 \pr-\sum_{j=1,2}i\om_n^{-1}\rS_n\p_j\Phi[\ti z_n]\U{N}_j(\ti z_n,\rS_n^{-1}\ti\y_n),}
where
\EQ{
 \pt eq_n(u):=(i\p_t-\De+V_n)u-|u|^2u, 
 \pr N(z,\y):=2\Phi[z]|\y|^2+\ba{\Phi[z]}\y^2+|\y|^2\y, 
 \pr \U{N}_j(z,\y)=\sum_{k=1,2}M^{-1}_{j,k}(z,\y)\LR{N(z,\y)|\p_k\Phi[z]},
 \pr M_{j,k}(z,\y)=\LR{i\p_j\Phi[z]|\p_k\Phi[z]}-\LR{i\y|\p_j\p_k\Phi[z]},}
and $\p_j\Phi[z]$ denotes the partial derivative in $(z_1,z_2)$ of $z=z_1+iz_2$. 
Apply the free profile decomposition to $\ti\y_n(0)$ on the time interval $[0,\I)$ (see, e.g., \cite[Lemma 5.2]{HR} or \cite[Proposition A.2]{NLS}):
\EQ{ \label{decop tiyn}
 e^{-it\De}\ti\y_n(0)=\sum_{0\le j<J}e^{-i(t-s_n^j)\De}\la^j+\ga_n^J(t),}
where the sequences of times $s_n^j\in[0,\I)$ satisfy 
\EQ{
 s_n^0\equiv 0,\pq s_n^j-s_n^k\to\pm\I\ (0\le j<k<J)} 
as $n\to\I$, and the linear profiles $\la^j\in H^1_r$ are defined by the weak limit
\EQ{ \label{def laj}
 \la^j=\weak\Lim_{n\to\I}e^{-is_n^j\De}\ti\y_n(0),} 
while the linear remainder $\ga^J_n\in C(\R;H^1_r)$ defined by \eqref{decop tiyn} satisfies
\EQ{ \label{small ga}
 \lim_{J\to J^*}\limsup_{n\to\I}\|\ga^J_n\|_{[L^\I_tL^4,\Stz^1]_\te(0,\I)}=0}
for some $J^*\in\N\cup\{\I\}$ and for all $\te\in[0,1)$, besides the weak vanishing 
\EQ{ \label{weak van}
 \weak\Lim_{n\to\I}\ga^J_n(s_n^j)=0,} 
which follows from \eqref{def laj}. 

Let $\La^j$ be the nonlinear profile associated with $\la^j$, and let $\Ga_n^J$ be the nonlinear remainder associated with $\ga_n^J$. More precisely, both $\La^j$ and $\Ga_n^J$ are solutions of \eqref{NLS} such that 
\EQ{ \label{def LaGa}
 \La^0(0)=\la^0,
 \pq \lim_{t\to-\I}\|\La^j(t)-e^{-it\De}\la^j\|_{H^1}=0\ (j\ge 1),
 \pq \Ga_n^J(0)=\ga_n^J(0).}
The small data scattering for \eqref{NLS} with \eqref{small ga} implies that if $J$ is close enough to $J^*$, then $\Ga_n^J$ for large $n$ is scattering as $t\to\I$ and small in $[L^\I_tL^4,\Stz^1]_\te(0,\I)$. 
In particular, it is small in the weaker norm 
\EQ{
 X:=L^{16}_tL^{24/7} \cap L^4_tL^6 \supset \Stz^{1/2},}
which is scaling invariant. Fix such $J<J^*$ for the rest of proof. 

Suppose that all the nonlinear profiles $\La^j$ ($0\le j<J$) are scattering as $t\to\I$, or equivalently $\La^j\in\Stz^1(0,\I)$. Then 
\EQ{
 \ck\y_n^J:=\sum_{0\le j<J}\La_n^j + \Ga_n^J, \pq \La_n^j:=\La^j(t+s_n^j),}
is an approximate sequence for $\ti\y_n$ in $X(0,\I)$. 
This claim is based on the long-time perturbation argument together with error estimates on $eq_n(\ti\y_n)$ and $eq_n(\ck\y_n^J)$. 
Note that $V_n$ in $eq_n$ is negligible when acting on the given approximate $\ck\y_n^J$, but not on $\ti\y_n$ enough to get a closed estimate. 
$eq_n(\ti\y_n)$ and $eq_n(\ck\y_n^J)$ are estimated in the norm
\EQ{
 Y:=L^4_tL^{6/5} + L^{8/3}_tL^{4/3}}
which is a non-admissible dual Strichartz norm, such that we have 
\EQ{ \label{Y2X}
 \left\|\int_0^t e^{i(t-s)(-\De+V_n)}P_nf(s)ds\right\|_X \lec \|f\|_Y,}
by rescaling \cite[Lemma 4.4]{NLSP1}, where $P_n$ is the rescaled projection 
\EQ{
 P_n:=\rS_n P_c \rS_n^{-1}.}

By the same argument as in \cite[(7.14)]{NLSP1}, we deduce from \eqref{weak van} that 
\EQ{ \label{local vanish Ga}
 \lim_{n\to\I}\|\Ga_n^J\|_{\st(|t-s_n^j|<\ta)}=0}
for any $\ta\in(0,\I)$ and any $0\le j<J$. 
Combining this, uniform bounds on $\La^j$ and $\Ga_n^J$ in $\Stz^1\subset X\cap L^4_tL^3$, the fact that the nonlinear part of $eq_n(\ck\y_n^J)$ is a linear combination of products of three from $\{\La_n^j,\Ga_n^J\}_j$ except the cubic power of each function, and H\"older estimates
\EQ{ \label{nonlin est 0mas scat}
 \pt \|fgh\|_{L^{8/3}_tL^{4/3}} \le \|f\|_{L^{16}_tL^{24/7}}\|g\|_{L^{16}_tL^{24/7}}\|h\|_{L^4_tL^6}, 
 \pr \|V_nf\|_{L^4_tL^{6/5}} \le \|V_n\|_{L^2}\|f\|_{L^4_tL^3}
 \lec \om_n^{-1/4}\|f\|_{L^4_tL^3},}
using $V\in L^2$, we obtain 
\EQ{
 \lim_{n\to\I}\|eq_n(\ck\y_n^J)\|_{Y(0,\I)}=0.}

For $\ti\y_n$, the scaling implies
\EQ{
 \|\ti\Phi_n\|_{L^\I_tL^3} = \|\Phi_n\|_{L^\I_tL^3} \sim \|z_n\|_{L^\I_t} \lec \om_n^{-1/4},}
hence by H\"older 
\EQ{
 \pt \|(\ti\Phi_n)^2\ti\y_n\|_{L^4_tL^{6/5}} \le \|\ti\Phi_n\|_{L^\I_tL^3}^2\|\ti\y_n\|_{L^4_tL^6} \lec \om_n^{-1/2}\|\ti\y_n\|_X, 
 \pr \|\ti\Phi_n(\ti\y_n)^2\|_{L^{8/3}_tL^{4/3}} \le \|\ti\Phi_n\|_{L^\I_tL^3}\|\ti\y_n\|_{L^4_tL^6}^{3/2}\|\ti\y_n\|_{L^\I_tL^3}^{1/2}
 \lec \om_n^{-1/4}\|\ti\y_n\|_X^{3/2}.}
Similarly we have, using $\|\p_j\Phi[z]\|_{L^{4/3}}\lec 1$,
\EQ{
 \pt\|\om_n^{-1}\rS_n\p_j\Phi[\ti z_n]\U{N}_j(\ti z_n,\y_n')\|_{L^{8/3}_tL^{4/3}}
 =\|\p_j\Phi[z_n]\U{N}_j(z_n,\y_n)\|_{L^{8/3}_tL^{4/3}}
 \pr\lec \|\p_j\Phi[z_n]\|_{L^\I_tL^{4/3}}\||\Phi[z_n]|+|\y_n|\|_{L^\I_tL^2}\|\y_n\|_{L^4_tL^6}^{3/2}\|\y_n\|_{L^\I_tL^3}^{1/2}
 \pr\lec \om_n^{-1/4}\|\ti\y_n\|_{L^4_tL^6}^{3/2}\|\ti\y_n\|_{L^\I_tL^3}^{1/2}.}
Summing these estimates yields 
\EQ{
 \|eq_n(\ti\y_n)\|_{Y(0,\I)} \lec \om_n^{-1/2}\|\ti\y_n\|_X+\om_n^{-1/4}\|\ti\y_n\|_X^{3/2}. }

Using the above estimates, and that $\|\ck\y_n^J\|_{X(0,\I)}$ is bounded by the assumption, we see that the error $e_n^J:=\ti\y_n-\ck\y_n^J$ satisfies 
\EQ{ \label{nonlin enJ}
 \pt\|(i\p_t-\De+V_n)e_n^J\|_{Y(I)}
 \pr\le\||\ti\y_n|^2\ti\y_n-|\ck\y_n^J|^2\ck\y_n^J\|_{Y(I)}
 +\|eq_n(\ti\y_n)\|_{Y(I)}+\|eq_n(\ck\y_n^J)\|_{Y(I)}
 \pr\lec [\|\ck\y_n^J\|_{X(I)}+\|e_n^J\|_{X(I)}+o(1)]^2\|e_n^J\|_{X(I)}+o(1),}
where $o(1)\to 0$ as $n\to\I$, uniformly on any interval $I\subset(0,\I)$. 
The global Strichartz estimate works only on the continuous spectrum part $P_ne_n^J$, but the other part is under control, because 
\EQ{
 e_n^J = \ti\y_n-\ck\y_n^J = R_n P_ne_n^J+(I-R_n P_n)\ck\y_n^J,}
where $R_n:=\rS_n R[\ti z_n] \rS_n^{-1}$, 
and so $1-R_nP_n=\rS_n\{1-R[\ti z_n]P_c\}\rS_n^{-1}$, while 
\EQ{
 1-R[z]P_c=\phi_0\be[z]} 
for some $\R$-linear operator $\be[z]:(L^1+L^\I)(\R^3) \to\C$, which is bounded uniformly for $z$. Hence 
\EQ{
 \pt\|[1-R_n P_n]\ck\y_n^J\|_{L^\I_tL^3}
 \lec \|\rS_n\phi_0\|_{L^3}\|\rS_n^{-1}\ck\y_n^J\|_{L^\I_tL^2}
 \lec \om_n^{-1/4}\|\ck\y_n^J\|_{L^\I_tL^2}=o(1),
 \pr\|[1-R_n P_n]\ck\y_n^J\|_{L^4_tL^6}
 \lec \|\rS_n\phi_0\|_{L^6}\|\rS_n^{-1}\ck\y_n^J\|_{L^4_tL^3}
 \lec \om_n^{-1/4}\|\ck\y_n^J\|_{L^4_tL^3}=o(1),}
using uniform bounds of $\ck\y_n^J$ in $\Stz^0\subset L^\I_tL^2\cap L^4_tL^3$. 
Since $L^\I_tL^3\cap L^4_tL^6$ on the left is stronger than $X$, and $R_n(t)$ is uniformly bounded on any $L^p(\R^3)$, we deduce that 
\EQ{ \label{P on enJ}
 \|e_n^J\|_{X(I)} \lec \|P_ne_n^J\|_{X(I)}+o(1),}
as $n\to\I$ uniformly on any interval $I\subset(0,\I)$. 
For a Gronwall-type argument by the non-admissible Strichartz, it is convenient to introduce the following norm
\EQ{
 \forall\fy\in H^1(\R^3), \pq \|\fy\|_{W_n}:=\|e^{it(-\De+V_n)}\fy\|_{X(0,\I)}.}
Let $(a,b)\subset(0,\I)$ such that $\|\ck\y_n^J\|_{X(a,b)}\ll 1$. Then by \eqref{Y2X}, \eqref{nonlin enJ} and \eqref{P on enJ}
\EQ{
 \pt \left.\begin{aligned}\|P_ne_n^J\|_{X(a,b)}\\ \|P_ne_n^J(b)\|_{W_n}\end{aligned}\right\}
 \le \|P_ne_n^J(a)\|_{W_n}+C\|(i\p_t-\De+V_n)e_n\|_{Y(a,b)},
 \prQ \|(i\p_t-\De+V_n)e_n\|_{Y(a,b)}\ll \|e_n^J\|_{X(a,b)}+o(1) \lec \|P_ne_n^J\|_{X(a,b)}+o(1),}
where the last term is absorbed by the first, and thus we obtain 
\EQ{
 \max(\|P_ne_n^J(b)\|_{W_n},\|P_ne_n^J\|_{X(a,b)}) \le 2\|P_ne_n^J(a)\|_{W_n}+o(1).}
Since $\ck\y_n^J$ is bounded in $X(0,\I)$, we can decompose $(0,\I)$ into intervals $I$ of a number $N$ independent of $n$ on which the above smallness in $X(I)$ is valid. Then iterating the above estimate on those intervals, and summing them up, we obtain 
\EQ{
 \|P_ne_n^J\|_{X(0,\I)} \le 2^{N+1}\|P_ne_n^J(0)\|_{W_n}+o(1)
 \lec \|e_n^J(0)\|_{H^1}+o(1)=o(1).}
Since it also implies $\|(i\p_t-\De+V^\om)e_n^J\|_{Y(0,\I)}\to 0$, the standard Strichartz with H\"older in $t$ implies that $\|e_n^J\|_{L^\I_tL^2(I)}\to 0$ on any bounded interval $I$. 
Then by interpolation with the $H^1$ bound, the convergence holds also in $L^\I_tH^s(I)$ for all $s<1$. 
Thus we have proven that if all the nonlinear profiles $\La^j$ scatter then for any $s<1$ and $T<\I$, 
\EQ{
 \lim_{n\to\I}\|\ti\y_n-\ck\y_n^J\|_{X(0,\I)\cap L^\I_tH^s(0,T)}=0, \pq \sup_{n\in\N}\|\ck\y_n^J\|_{X(0,\I)}<\I,}
which contradicts that $\|\ti\y_n\|_{X(0,\I)}=\|\y_n\|_{X(0,\I)}\sim\|\x_n\|_{X(0,\I)}\to\I$. 
Therefore, at least one profile $\La^j$ does not scatter. 
The definition \eqref{def LaGa} implies $\bA(\La^j)=\frac12\|\la^j\|_{H^1}^2\ge 0$ for $j\ge 1$, then using the asymptotic orthogonality at $t=0$, we have 
\EQ{ 
 \bA(\ti\y_n(0))\pt=\bA(\La^0)+\sum_{1\le j<J}\frac 12\|\la^j\|_{H^1}^2+\frac12\|\ga_n^J\|_{H^1}^2+o(1)
 \pr=\sum_{0\le j<J}\bA(\La^j)+\bA(\Ga^J_n)+o(1)}
as $n\to\I$.  
We also have $\bE^0(\La^0)\ge 0$, since otherwise $\La^0$ blows up in $t>0$, contradicting that $\La^0=\weak\Lim_{n\to\I}\ti\y_n$ is bounded in $H^1$ on $t\ge 0$. 

Since $\bM(\ti u_n)=\bM(\ti \Phi[z_n])+\bM(\ti \eta_n)$, $\|\ti\Phi_n\|_{\dot H^1}\lec\om_n^{-1/2}$ and $|\ml{V_n}(\ti u_n)|\lec\om_n^{-1/4}$ by \eqref{Vom small}, we have
\EQ{ \label{EM lim}
 \bA(\ti\y_n(0)) \pt\le \bA(\ti\y_n(0))+\bM(\ti\Phi_n(0))
 \pn= \bA^{\om_n}(\ti u_n)+o(1)
 \pn\le \bA(Q)+A^*+o(1)}
as $n\to\I$. Since $A^*\ll \bA(Q)$, we deduce that at most one profile can satisfy $\bA(\La^j)\ge\bA(Q)$, and all the others are below the ground state $Q$, and so scattering by \cite{HR}. 

Hence there is exactly one profile $\La^j$ which is not scattering and $\bA(\La^j)\ge\bA(Q)$. 
Then the above approximation by $\ck\y_n^J$ works up to $t=s_n^j+O(1)$, which, together with \eqref{local vanish Ga}, implies that 
\EQ{
 \ti\y_n(s_n^j+t)\to\La^j(t) \IN{\weak{H^1_r}}\pq(n\to\I)} 
for any $t\in\R$ if $j\ge 1$ and for any $t\ge 0$ if $j=0$. 
Hence 
\EQ{
 \pt\limsup_{n\to\I}\frac12\|\ti\y_n(s_n^j+t)-\La^j(t)\|_{H^1}^2+\bM(\ti\Phi_n(s_n^j+t))
 \pr=\limsup_{n\to\I}\bA(\ti\y_n(s_n^j+t))+\bM(\ti\Phi_n(s_n^j+t))-\bA(\La^j(t))
 \pn\le A^* \ll \de_*^2.}
On the other hand, $d_{\om_n}(\ti u_n(s_n^j+t))\ge\de_*$, $\|Q_{\om_n}-Q\|_{H^1}\to 0$ and $\|\ti\Phi_n\|_{\dot H^1}\to 0$ imply 
\EQ{
 \|\ti\y_n(s_n^j+t)-e^{i\te}Q\|_{H^1} \gec \de_* - C\|\ti\Phi_n(s_n^j+t)\|_2 + o(1)\gec \de_*.}
Combining the above two estimates and $\|\La^j(t)-e^{i\te}Q\|_{H^1}\ll\de_*$ yields a contradiction. 

Therefore we have a uniform lower bound $\inf_{\te\in\R}\|\La^j(t)-e^{i\te}Q\|_{H^1}\gec\de_*$, as well as energy bound $\bA(\La^j)\le\bA(Q)+A^*$. Hence by the result in \cite{NLS} for the NLS without potential, if $A^*\ll\de_*^2$, then $\La^j$ scatters to $0$ as $t\to\I$, which is a contradiction. 

Thus we have reached contradiction both for $\OM<\I$ and for $\OM=\I$. 
Therefore $A^*\gec\de_*^2$ and we have proven 
\begin{lem} \label{lem:scatafter}
There is a constant $c_*\in(0,c_X]$ such that for every $\om\ge\om_\star$ and every solution $u_\om$ of \eqref{rscNLS} satisfying 
\EQ{
 \pt \bA^\om(u_\om)\le \bA^\om(Q_\om)+c_*\de_*^2, \pq \inf_{t\ge 0}d_\om(u_\om(t))\ge\de_*, \pq \Sg_\om(u_\om(0))=+1,}
scatters to $\Phi$ as $t\to\I$. 
\end{lem}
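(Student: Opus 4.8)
The plan is to run a Kenig--Merle type minimal counterexample argument, uniformly in $\om\ge\om_\star$, splitting according to whether the rescaled time frequency stays bounded or concentrates as one passes to a limiting profile. Global existence of $u_\om$ is immediate from the uniform $H^1$ bound of Lemma \ref{lem:unifbd} (valid since $\Sg_\om(u_\om(0))=+1$ and, by the blow-up Lemma \ref{lem:bupafter}, $\Sg_\om$ cannot be $-1$ under the hypotheses), so only scattering needs proof. First I would introduce the critical threshold as in \eqref{def A*}: $\FS_\om(A)$ is the set of forward-global solutions with $\bA^\om(u_\om)\le\bA^\om(Q_\om)+A$ staying at distance $\ge\de_*$ from $\cQ_\om$, $\STN^\om(A)$ the supremum of the radiation Strichartz norm over $\FS_\om(A)$, and $A^*=\inf_{\om\ge\om_\star}\sup\{A<c_X\de_*^2\mid\STN^\om(A)<\I\}$. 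One has $A^*\ge0$ because, by $\sE_1''>0$ in \eqref{formula E1''}, the region $\bA^\om(u_\om)<\bA^\om(Q_\om)$ lies below the graph of $\sE_1$ and is covered by the scattering theory below $\Soli_1$ of \cite{NLSP1}. Assume for contradiction $A^*\ll c_X\de_*^2$, and extract $\om_n\to\OM\in[\om_\star,\I]$, $A_n\to A^*$, $u_n\in\FS_{\om_n}(A_n)$ with radiation Strichartz norm tending to $\I$; the solutions are uniformly bounded in $H^1$ by Lemma \ref{lem:unifbd}, and by the one-pass Lemma \ref{lem:one pass} together with the trapping Lemma \ref{lem:stay} one may translate in time so that $d_{\om_n}(\ti u_n(0))\ge\de_X$.

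In the case $\OM<\I$ I would apply the nonlinear profile decomposition of \cite[\S5--7]{NLSP1} to $u_n$ on $[0,\I)$. A profile with mass-energy below the excited-state threshold scatters, so by \cite[Theorem 7.2]{NLSP1} there is at least one non-scattering profile; using the asymptotic orthogonality of mass-energy \cite[(7.12)]{NLSP1} and $A^*\ll\bA^\OM(Q_\OM)$, two non-scattering pieces would already exceed $2\bA^\OM(Q_\OM)$, so exactly one survives, which — after a time shift and using the weak-limit energy inequality and Lemma \ref{lem:stay} to force $d_\OM\gec\de_*$ — is a critical element $u\in\FS_\OM(A^*)$ with infinite $\ST$-norm. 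Rerunning the same decomposition along any $t_n\to\I$ shows that a non-scattering profile on either side of $t_n$ forces strong $H^1$-convergence of $u(t_n)$, hence $\{u(t)\}_{t\ge0}$ is precompact in $H^1_r$. Finally I would invoke the localized virial identity \eqref{vir-s} exactly as in Section \ref{onepass finiteom}: precompactness yields a cutoff radius $m$ with $|\dot\sV_m-2\bK^\OM_2(\ti u_\I)|\ll\ka_V(\de_V)$; decomposing a returning interval $[t_-,\I)$ into $I_H\cup I_V$ as in \eqref{def IHV}, the ejection estimates of Lemma \ref{lem:eject} give a uniformly positive jump of $\sV_m$ over each $I(t_0)$ while the variational Lemma \ref{lem:var} gives $\dot\sV_m>0$ on $I_V$, so $\sV_m\to\I$, contradicting precompactness. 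This precludes $\OM<\I$.

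In the concentrating case $\OM=\I$ one has $Q_\OM=Q$ and $V^{\om_n}\to0$. Decompose $u_n=\Phi[z_n]+\y_n$, rescale $\ti\y_n:=\rS_{\om_n}\y_n(t/\om_n)$, and apply the \emph{free} NLS profile decomposition to $\ti\y_n(0)$ on $[0,\I)$. The heart of the matter is to treat the rescaled potential $V_n=V^{\om_n}$ and the ground-state coupling terms as perturbative errors in the non-admissible dual Strichartz norm $Y$: they are $O(\om_n^{-1/4})$ times controlled quantities — here $V\in L^2$ enters through $\|V_nf\|_{L^4_tL^{6/5}}\lec\om_n^{-1/4}\|f\|_{L^4_tL^3}$ — and $\Phi[z_n]$ is $O(\om_n^{-1/4})$ in $L^\I_tL^3$. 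One then runs a long-time perturbation / Gronwall argument on the continuous-spectral part $P_ne_n^J$, the $\phi_0$-component being negligible since $\rS_n\phi_0\to0$ in $L^3\cap L^6$, to conclude that if every nonlinear profile $\La^j$ scatters then $\ti\y_n$ scatters with a uniform Strichartz bound, contradicting that the radiation norm blows up. Hence some $\La^j$ does not scatter; by the mass-energy orthogonality $\bA(\La^j)\le\bA(Q)+A^*\ll\bA(Q)+\de_*^2$, at most one profile has $\bA\ge\bA(Q)$ (the others scatter by \cite{HR}), and that surviving profile stays $\gec\de_*$ away from $\{e^{i\te}Q\}_\te$ because $d_{\om_n}(\ti u_n)\ge\de_*$ while $\|\ti\Phi_n\|_{\dot H^1}\to0$ and $\|Q_{\om_n}-Q\|_{H^1}\to0$; then the classification for the potential-free NLS in \cite{NLS} (and \cite{DHR}) forces it to scatter to $0$, a contradiction when $A^*\ll\de_*^2$. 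Therefore $A^*\gec\de_*^2$, and choosing $c_*\in(0,c_X]$ with $c_*\de_*^2<A^*$ gives the lemma.

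I expect the main obstacle to be the concentrating case: making the free-NLS profile decomposition and the long-time perturbation argument survive the coupling with $V^{\om_n}$ and with $\Phi[z_n]$ uniformly in $n$, which is exactly what forces the additional hypothesis $V\in L^2(\R^3)$ and the separate handling of the $\phi_0$-component via the rescaled projection $P_n$. A secondary but pervasive difficulty is arranging all ingredients — the profile decompositions, the small-data scattering, the virial constants, the ejection and trapping estimates — to hold with constants uniform in $\om\ge\om_\star$, and using the one-pass Lemma \ref{lem:one pass} correctly so that the minimal element may be assumed to start at distance $\ge\de_X$ from $\cQ_\om$.
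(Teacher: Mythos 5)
Your proposal is correct and follows essentially the same route as the paper: the Kenig--Merle threshold $A^*$ over $\FS_\om(A)$, the dichotomy $\OM<\I$ versus $\OM=\I$, extraction of a minimal element with precompact orbit killed by the localized virial \eqref{vir-s} in the bounded-frequency case, and the free-NLS profile decomposition with the $V^{\om_n}$, $\Phi[z_n]$ and $\phi_0$-component errors absorbed perturbatively (using $V\in L^2$) in the concentrating case, concluding via \cite{HR} and \cite{NLS}. This matches the paper's proof of Lemma \ref{lem:scatafter} in both structure and the lemmas invoked.
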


\subsection{Classification of the dynamics}
Let $\om\ge\om_\star$ and let $u_\om$ be a solution of \eqref{rscNLS} from $t=0$ with the maximal existence time $T_+\in(0,\I]$ satisfying the constraint: 
\EQ{ \label{energy const}
  \bA^\om(u_\om)< \bA^\om(Q_\om) + c_*\de_*^2,}
where $\de_*,c_*>0$ are the small constants introduced in Lemmas \ref{lem:one pass} and \ref{lem:scatafter}. $u(t):=\rS_\om^{-1}u_\om(\om t)$ solves the original equation \eqref{NLSP} on $[0,T_+/\om)$ with 
\EQ{
 \bA_\om(u) < \bA_\om(\Psi[\om])+\om^{1/2}c_*\de_*^2.}
The distance function in the rescaled variable is abbreviated as before by
\EQ{
 d(t):=d_\om(u_\om(t)).}

If $\inf_{0\le t<T_+}d(t)\ge\de_*$, 
then \eqref{energy const} and $c_*\le c_X$ imply that $u_\om(t)\in\ck\cH_\om$ for all $t\in[0,T_+)$. 
Moreover, Lemmas \ref{lem:bupafter} and \ref{lem:scatafter} imply
\EQ{ \label{bs after}
 \Sg_\om(u_\om(0))=\CAS{+1 \implies \text{$u_\om$ scatters to $\Phi$ as $t\to\I$},\\
 -1 \implies \text{$u_\om$ blows up in $t>0$.}}}

If $\inf_{0\le t<T_+}d(t)<\de_*$, then the one-pass lemma \ref{lem:one pass} implies that $d(t)<\de_*$ on $t\in(t_1,t_2)$ and $d(t)>\de_*$ on $t\in(t_2,T_+)$, for $t_1,t_2\in[0,T_+]$ defined by 
\EQ{
 t_1:=\inf\{t\in[0,T_+)\mid d(t)<\de_*\},
 \pq t_2:=\sup\{t\in[0,T_+)\mid d(t)<\de_*\}.} 

If $t_1>0$, then applying the ejection lemma \ref{lem:eject} from $t=t_1$ backward, there exists $t_0<t_1$ such that $d(t)$ is strictly and exponentially decreasing on $[t_0,t_1]$ with 
\EQ{
 d(t) \sim e^{-\al_\om(t-t_0)}\de_X \sim e^{-\al_\om(t_1-t)}\de_*,
 \pq d(t_0)=\de_X>d(t_1)=\de_*.}

If $t_2<T_+$, then we have the same dichotomy as in \eqref{bs after} at $t=t_2$. 

If $t_2=T_+$, then the uniform bound $d(t)\le\de_*$ for $t\ge t_1$ implies $t_2=T_+=\I$, and by the trapping lemma \ref{lem:stay}, there exists $t_3\in[t_1,\I]$ such that $d(t)$ is strictly and exponentially decreasing on $[t_1,t_3)$ with 
\EQ{
 \CAS{t_1\le t<t_3 \implies d(t)\sim e^{-\al_\om(t-t_1)}\de_*, \\
 t_3<t<\I \implies c_Xd(t)^2 < \bA^\om(u_\om)-\bA^\om(Q_\om).}}
This implies that $u(t)\in\cN_{\de}$ for large $t$ and for some $\de\sim(c_*/c_X)^{1/2}\de_*$, so $u$ is trapped by $\Psi$ as $t\to\I$. 
We have $t_3=\I$ if and only if $u(t)$ is strongly convergent to $e^{-i\om(t-a)}\Psi[\om]$ in $H^1_r$ as $t\to\I$ for some $a\in\R$.

Now that we have proven the classification part of the main Theorem \ref{thm:main}, together with some description of each behavior, it remains to see for which initial data each of the possibilities occurs, especially for the trapping and the transition. 

\section{Center-stable manifold of the excited solitons} \label{s:mfd}
In this section, we show that the set of initial data for which the solution is trapped by $\Psi$ is a $C^1$ manifold of codimension $1$, and that it is a threshold between the scattering to $\Phi$ and the blow-up. 
It is a center-stable manifold of $\Soli_1|_{\bM\ll 1}$, its time inversion is a center-unstable manifold, and there are all the 9 types of solutions around the transversal intersection of them. 

\subsection{Construction around the excited states}
First we construct a manifold around a fixed excited soliton $e^{-it}Q_\om$ by the bisection argument as a graph of $(\lb_-(0),\z(0))\mapsto \lb_+(0)$ in the decomposition \eqref{exp v}. 
\begin{thm} \label{thm:mfd}
Take $\de_\pm\in(0,\de_X)$ such that $\de_-/\de_+,\de_+/\de_X$ and $(\de_-\de_++\de_+^3)/(c_X\de_X^2)$ are all small enough. Then a unique $C^1$ function $G_\om$ is defined for each $\om\ge\om_*$ on 
\EQ{
 U_\om:=\{(\lb_-,\z)\in\R\times\cZ_\om  \mid \max(|\lb_-|,\|\z\|_\om)<\de_-\}, }
such that for any 
\EQ{
 (\te,\lb_+,\lb_-,\z)\in \cU_\om:=(\R/2\pi\Z)\times(-\de_+,\de_+)\times U_\om,} 
the solution of \eqref{rscNLS} with the initial condition $u_\om(0)=\sC_\om(\te,\lb_+,\lb_-,\z)$ satisfies 
\EN{
\item If $\lb_+=G_\om(\lb_-,\z)$ then $d_\om(u_\om(t))<\de_X/2$ for all $t\ge 0$. 
\item If $\lb_+\not=G_\om(\lb_-,\z)$ then $d_\om(u_\om(t))$ reaches $\de_X$ at some $t_X>0$, where 
\EQ{
 \sB^\om_1(u_\om(t_X))\sim \sign(\lb_+-G_\om(\lb_-,\z))\de_X.}
}
Moreover, we have $|G_\om(\lb_-,\z)|\lec|\lb_-|^2+\|\z\|_\om^2$. 
\end{thm}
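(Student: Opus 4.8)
The plan is to obtain $G_\om$ as the graph of a Lyapunov--Perron fixed point for the unstable mode $\lb_+$, in the spirit of the center-stable manifold constructions of \cite{NLS,NLSP1}; equivalently, one can phrase the construction as a bisection in the value $\lb_+(0)$. Everything is carried out in the rescaled variables of \eqref{rscNLS}, so that the $\om$-uniform norm equivalences and coercivity of Section~\ref{s:excited}, together with the $\om$-uniform constants of Lemmas~\ref{lem:eject}, \ref{lem:stay} and \ref{lem:one pass}, make the whole construction uniform for $\om\ge\om_*$.

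First I would fix $\om\ge\om_*$ and $(\lb_-,\z)\in U_\om$ and seek a solution $v$ of the decomposed system \eqref{eq la}--\eqref{eq m} on $[0,\I)$ with $P^\om_-v(0)=\lb_-$, $P^\om_\ce v(0)=\z$, and with the unstable component pinned down by boundedness: integrating $\dot\lb_+=\al_\om\lb_++\cN^\om_+(v)$ and demanding $\sup_{t\ge0}|\lb_+(t)|<\I$ forces
\EQ{ \label{LP formula} \lb_+(0)=-\int_0^\I e^{-\al_\om s}\cN^\om_+(v(s))\,ds=:G_\om(\lb_-,\z). }
Coupling \eqref{LP formula} with the forward Duhamel representation of $\lb_-$ and that of $\z$ (whose linear part is $e^{it\cL^\om}$ on $\cZ_\om$, bounded on $H^1_r$ uniformly in $\om$) yields a map on a small ball in a weighted $C([0,\I);H^1_r)$ which is a contraction because $\cN^\om$ is at least quadratic; its unique fixed point $v=v[\lb_-,\z]$ satisfies $\sup_{t\ge0}\|v(t)\|_{H^1}\lec\max(|\lb_-|,\|\z\|_\om)$. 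Setting $u^G_\om:=e^{i\te}(Q_\om+v)$ with $\te$ from \eqref{eq te}, this is a global-in-$t\ge0$ solution of \eqref{rscNLS} near $\cQ_\om$ that never ejects, so Lemma~\ref{lem:stay} applies, and the distance gap together with $\bA^\om(u^G_\om)-\bA^\om(Q_\om)\lec\|\z\|_\om^2$ (from \eqref{exp Aom2} and $\lb_+(0)=G_\om=O(\max(|\lb_-|,\|\z\|_\om)^2)$) forces $d_\om(u^G_\om(t))<\de_X/2$ for all $t\ge0$; this is (1), and $v[0,0]=0$ gives $G_\om(0,0)=0$.

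For (2) I would compare an arbitrary datum $\sC_\om(\te_0,\lb_+,\lb_-,\z)$ with the trapped one: the difference $p(t):=\sB^\om_+(u_\om(t))-\sB^\om_+(u^G_\om(t))$, with $p(0)=\lb_+-G_\om(\lb_-,\z)$, obeys $\dot p=\al_\om p+O\big(\de_X(|p|+|q|+\|\th\|_{H^1})\big)$ (the modulation mismatch being a higher-order correction), where $q,\th$ are the differences of the stable and central components and $q(0)=\th(0)=0$; since $q$ contracts forward and $\th$ grows at most sublinearly against the exponentially growing $p$, a Gronwall argument gives $|p(t)|\sim e^{\al_\om t}|p(0)|$ and $\sign p(t)=\sign(\lb_+-G_\om)$ as long as $d_\om(u_\om)<\de_X$. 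Hence $p$ drives $u_\om$ into the regime \eqref{eject0} (its energy condition holding by smallness of $(\de_-\de_++\de_+^3)/(c_X\de_X^2)$), after which Lemma~\ref{lem:eject} increases $d_\om(u_\om)$ to $\de_X$ at a first time $t_X>0$; near $t_X$ the solution is well accelerated, so Lemmas~\ref{lem:eject} and \ref{lem:sign}(ii) give $\sB^\om_1(u_\om(t_X))\sim\sigma\de_X$ with $\sigma=\sign(\lb_+-G_\om(\lb_-,\z))$, and Lemma~\ref{lem:one pass} makes this exit unique. The same estimate for $p$ gives uniqueness of $G_\om$ (two distinct trapped values of $\lb_+(0)$ would force $|p(t)|\to\I$ with both solutions in $d_\om<\de_X$); in bisection language, $A_\pm:=\{\lb_+:\sigma=\pm1\}$ are open, disjoint, upward/downward closed, and contain neighborhoods of $\pm\de_+$ (there $|\lb_+(0)|\gg|\lb_-|,\|\z\|_\om$), so their complement in $(-\de_+,\de_+)$ is the single point $G_\om$. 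Smooth dependence of the fixed point on $(\lb_-,\z)$ gives $G_\om\in C^1$, and \eqref{LP formula} with $|\cN^\om_+(v)|\lec\|v\|_{H^1}^2$ and the above bound on $v$ give
\EQ{ |G_\om(\lb_-,\z)|\lec\int_0^\I e^{-\al_\om s}\|v(s)\|_{H^1}^2\,ds\lec\max(|\lb_-|,\|\z\|_\om)^2\lec|\lb_-|^2+\|\z\|_\om^2, }
which also yields $DG_\om(0,0)=0$.

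The main obstacle I anticipate is the quantitative hyperbolic bookkeeping behind both (2) and the uniqueness: one must propagate the smallness of the central difference $\th$ — which does not decay in norm, as $e^{it\cL^\om}$ on $\cZ_\om$ is only bounded — against the exponentially growing unstable difference $p$, uniformly in $\om\ge\om_*$, which closes only because the coupling is quadratically small, $q(0)=\th(0)=0$, and Lemma~\ref{lem:one pass} forbids any re-approach of $\cQ_\om$ after ejection. The attendant choice of thresholds is where the hypotheses are used: $\de_-/\de_+$ and $\de_+/\de_X$ small keep the local coordinate $\sC_\om$ and the sign identification valid, while $(\de_-\de_++\de_+^3)/(c_X\de_X^2)$ small keeps $\bA^\om(u_\om)-\bA^\om(Q_\om)$ below both $c_X d_\om^2$ at the boundary data and $c_X\de_X^2$ throughout the ejection, so that Lemma~\ref{lem:eject} is applicable from start to finish.
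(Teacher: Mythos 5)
Your hyperbolic comparison for (2) and for uniqueness is in the spirit of the paper's cone-condition estimate (Lemma \ref{lem:Lip}), but the existence step has a genuine gap. The Lyapunov--Perron map you propose does not contract (indeed does not even map a small ball into itself) in $C([0,\I);H^1_r)$: the center component $\z=P^\om_\ce v$ evolves under $e^{it\cL^\om}$, which on $\cZ_\om$ is only bounded, with no decay, so the Duhamel contribution of the quadratic source obeys only $\|\int_0^t e^{i(t-s)\cL^\om}P^\om_\ce\cN^\om(v(s))\,ds\|_{H^1}\lec t\,\de^2$, growing linearly in $t$; ``the nonlinearity is at least quadratic'' therefore does not close the estimate, and your claimed bound $\sup_{t\ge0}\|v(t)\|_{H^1}\lec\max(|\lb_-|,\|\z\|_\om)$ for the fixed point --- on which (1), the quadratic bound on $G_\om$, and the $C^1$ dependence all rest --- is unsupported. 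Passing to a weighted space allowing growth $e^{\e t}$ keeps your integral formula for $\lb_+(0)$ convergent but destroys conclusion (1) (and the validity of the modulation $m^\om(v)$, which is defined only for small $v$), while demanding decay excludes the free center evolution $e^{it\cL^\om}\z(0)$. To run Lyapunov--Perron here one would need global-in-time dispersive/Strichartz estimates for $e^{it\cL^\om}$ on $\cZ_\om$, uniformly in $\om\ge\om_*$; these are neither established in the paper nor supplied by you (assumption (iv) on wave operators concerns $H$, not the non-self-adjoint matrix operator $\cL^\om$).

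The paper avoids this entirely: existence of the trapped value of $\lb_+$ is obtained by a purely topological bisection (shooting) argument --- the sets $B_\pm$ of data ejected with sign $\pm$ are open, disjoint and nonempty, so their complement in $(-\de_+,\de_+)$ is nonempty, and for any $b_+$ in that complement the Trapping Lemma \ref{lem:stay} together with the energy bound $\bA^\om(u_\om)-\bA^\om(Q_\om)\lec\de_-\de_++\de_+^3\ll c_X\de_X^2$ gives (1). Uniqueness, the quadratic bound $|G_\om(\lb_-,\z)|\lec|\lb_-|^2+\|\z\|_\om^2$, the sign statement in (2), and the $C^1$ regularity then all follow from the cone-condition Lemma \ref{lem:Lip} for the difference equation \eqref{eqpb}, which needs only local-in-time Strichartz norms ($\Stz^1\ul$) and the coercivity of $\cL^\om$ on $\cZ_\om$: the exponential growth of the unstable difference is propagated over unit time intervals against the merely bounded growth of the central difference. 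Your own ``bisection language'' remark is the right starting point, but there uniqueness must come from the cone estimate (as in the paper), not from the fixed point; if you replace the contraction construction by the shooting argument and prove the cone estimate quantitatively, your outline essentially becomes the paper's proof.
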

The above characterization (1)-(2) implies that the value of $G_\om$ is independent of the choice of $\de_\pm$. 
The distance upper bound $\de_X/2$ in the case (1) is chosen just for distinction from the case (2), but it can be made arbitrarily small by taking $\de_\pm$ smaller. 

\subsubsection{Existence of $G_\om$}
First, we prove the existence of a value of $\lb_+$ for which $u_\om$ is trapped. 
Fix $\de_\pm$ such that $0<\de_-\ll\de_+\ll\de_X$ and $\de_-\de_++\de_+^3\ll c_X\de_X^2$. Take any $(\lb_-,\z)\in U_\om$. Since $d_\om(u_\om(0))\lec\de_+\ll\de_X$, if $d_\om(u_\om(t))$ reaches $\de_X$ at some $t=t_X>0$, then the ejection lemma \ref{lem:eject} implies $|b_1(t_X)|\sim\de_X$. 
Let $B_\pm$ be the sets of such $b_+\in(-\de_+,\de_+)$ that $b_1(t_X)\sim\pm\de_X$ at the first ejection time in $t>0$. 

$B_\pm$ are open, because the ejection lemma applies to perturbed solutions $u_\om'$ as long as $d_\om(u_\om'(t_X))\sim\de_X\gg d_\om(u_\om'(0))$, while $\sign b_1$ remains constant. 
$B_+\cap B_-=\emptyset$ by definition. 
Both sets are non-empty, because for $|b_+|\gg|b_-|+\|\z\|_\om$, \eqref{exp Aom} and \eqref{eq d2} imply that the ejection condition \eqref{eject0} is satisfied at $t=0$, then $\sign b_1(t)=\sign b_+(t)$ is preserved until $d_\om(u_\om(t))$ reaches $\de_X$. 
 
Hence by the connectedness, $(-\de_+,\de_+)\setminus(B_+\cup B_-)$ is not empty either. If $b_+$ is in this set, then by definition of $B_\pm$, we have $d_\om(u_\om(t))<\de_X$ for all $t\ge 0$, and so the trapping lemma \ref{lem:stay} applies to $u_\om$ on $t\ge 0$. Since
\EQ{
 \bA^\om(u_\om)-\bA^\om(Q_\om) \pt=-2b_+b_-+\frac12\LR{\cL^\om\z|\z}-C^\om(b_+g^\om_+ + b_-g^\om_-+\z) 
 \pr\lec \de_-\de_+ +\de_+^3,}
the trapping lemma implies that for all $t\ge 0$
\EQ{ 
 d_\om(u_\om(t))^2 \pt\le \min(d_\om(u_\om(0))^2,c_X^{-1}(\bA^\om(u_\om)-\bA^\om(Q_\om))) 
 \pr\lec \min(\de_+^2,c_X^{-1}(\de_-\de_++\de_+^3)) \ll\de_X^2.}

\subsubsection{Lipschitz estimate}
Next we prove a key Lipschitz estimate for a generalized difference equation of \eqref{eq v} for trapped solutions, which will imply the uniqueness and Lipschitz continuity of $G_\om$. 

Before taking the difference, we prepare time-local bound on the Strichartz norm. Let $v$ be a solution of \eqref{eq v} on an interval $I$. Applying the Strichartz estimate of $e^{-it\De}$ to the equation of $v$, we deduce that there exists a small constant $\de_S\in(0,1)$ such that 
\EQ{
 \|v\|_{L^\I_tH^1(I)}\le\de_S \tand |I|\le 1
 \implies \|v\|_{\Stz^1(I)} \lec \|v\|_{L^\I_tH^1(I)}.}
In particular, denoting 
\EQ{
 \|u\|_{\Stz^1\ul(I)}:=\sup_{J\subset I,\ |J|\le 1}\|u\|_{\Stz^1(J)},}
we have 
\EQ{
 \|v\|_{L^\I_tH^1(0,\I)}\le\de_S
 \implies \|v\|_{\Stz^1\ul(0,\I)} \lec \|v\|_{L^\I_tH^1(0,\I)}.}

Now let $v\zr,v\on$ be two solutions of \eqref{eq v}, and let $\vec v:=(v\zr,v\on)$. 
Then the difference $\diff v\pa=v\on-v\zr$ satisfies
\EQ{ \label{eqdifv}
 (\p_t-i\cL^\om)\diff v\pa \pt= -iQ_\om \diff m^\om(v\pa) + \diff{\cN^\om(v\pa)}
 \pn= [-iQ_\om \deri m^\om(\vec v) + \deri \cN^\om(\vec v)]\diff v\pa,}
where $\deri m^\om(\vec v)$ and $\deri \cN^\om(\vec v)$ are operators defined by the following: for any function $X(v)$ which is Fr\'echet differentiable in $v$, and for $\vec v=(v\zr,v\on)$, define  
\EQ{ \label{def deri}
 \deri X(\vec v):=\int_0^1 X'((1-\te)v\zr+\te v\on) d\te,}
so that the difference of $X$ at $v\zr$ and $v\on$ can be written as 
\EQ{
 \diff X(v\pa)=\deri X(\vec v)\diff v\pa.}
The Fr\'echet derivatives $(N^\om)'(v), (\cN^\om)'(v):H^1\to H^{-1}$ and $(m^\om)'(v):H^1\to\R$ can be written explicitly as follows.
\EQ{
 (N^\om)'(v)\fy \pt= 2Q_\om(3v_1\fy_1+v_2\fy_2)+3v_1^2\fy_1+2v_1v_2\fy_2
 \prq + i[2Q_\om(v_2\fy_1+v_1\fy_2)+2v_1v_2\fy_1+3v_2^2\fy_2],
 \\ (m^\om)'(v)\fy &=[\LR{Q_\om|Q_\om'}+\LR{v|Q_\om'}]^{-2}[\LR{v|Q_\om}+\LR{N^\om(v)|Q_\om'}]\LR{Q_\om'|\fy}
 \prq-[\LR{Q_\om|Q_\om'}+\LR{v|Q_\om'}]^{-1}[\LR{Q_\om|\fy}+\LR{(N^\om)'(v)\fy|Q_\om'}],
 \\ (i\cN^\om)'(v)\fy &= [(m^\om)'(v)\fy]v+m^\om(v)\fy+(N^\om)'(v)\fy.}
A similar expression is obtained for $\deri N^\om(\vec v)$ by replacing $v$ in $(N^\om)'(v)$ with $v\zr$ and $v\on$, taking the linear combination of such terms. 
The computation for $\deri m^\om(\vec v)$ is slightly more complicated because of the quotient, but still elementary. 

The above equation \eqref{eqdifv} is linear in $\diff v\pa$, so the difference quotient, as well as its limit, namely the derivative, solves the same form of equation. 
Hence it is convenient to derive a Lipschitz estimate for general solutions $v\pb$ of the linear equation 
\EQ{ \label{eqpb}
 (\p_t-i\cL^\om)v\pb = [-iQ_\om \deri m^\om(\vec v)+\deri \cN^\om(\vec v)]v\pb,
 \pq \vec v:=(v\zr,v\on),}
where $v\zr,v\on$ are given functions satisfying for some small $\de>0$, 
\EQ{
 \max_{j=0,1}\|v\oj\|_{\Stz^1\ul(0,\I)} \le \de.}
In other words, we ignore the relation $\diff v\pa=v\on-v\zr$ in \eqref{eqdifv}. 

It is easy to see, using the Strichartz estimate, that \eqref{eqpb} is wellposed for $H^1\ni v\pb(0)\mapsto v\pb\in\Stz^1\loc(0,\I)$, and that the solution satisfies 
\EQ{
 \p_t \LR{iv\pb|Q_\om'}=0} 
(by differentiating the equation \eqref{eq m} of $m^\om$). 
Hence the orthogonality $\LR{iv\pb|Q_\om'}=0$ is preserved if it is initially fulfilled. 

$v\pb$ is decomposed as before by the symplectic orthogonality 
\EQ{
 v\pb = \lb_+\pb g^\om_+ + v\pc = \lb_+\pb g^\om_+ + \lb_-\pb g^\om_- + \z\pb,
 \pq \lb_\pm\pb:=P^\om_\pm v\pb, \pq \LR{i\z\pb|g^\om_\pm}=0.}
Then using the equation \eqref{eqpb}, we obtain 
\EQ{ \label{lapb inc}
 |(\p_t \mp 2\al_\om)|\lb_\pm\pb|^2|
 \pt=2|\LR{P^\om_\pm \deri \cN^\om(\vec v)v\pb|\lb_\pm\pb}| 
 \pr\lec \|\vec v\|_{H^1}\|v\pb\|_{H^1}|\lb_\pm\pb|
 \lec \de \|v\pb\|_{H^1}|\lb_\pm\pb|,}
and, using H\"older and partial integration in $x$,
\EQ{ 
  |\p_t\|\z\pb\|^2_\om|\pt=|\LR{P^\om_\ce \deri \cN^\om(\vec v)v\pb|\cL^\om \z\pb}|
 \pr\lec \BR{\|\vec v\|_{L^\I}+\|\vec v\|_{L^\I}^2}\|v\pb\|_{H^1}\|\z\pb\|_{H^1}
 \prQ + \BR{\|\na \vec v\|_{L^3}+\|\vec v\|_{L^\I}\|\na \vec v\|_{L^3}}\|v\pb\|_{L^6}\|\z\pb\|_{H^1}.}
Hence for any interval $I\subset[0,\I)$ with length $|I|\le 1$, 
\EQ{ \label{gapb inc}
 [\|\z\pb\|^2_\om]_{\p I} \lec \de \|v\pb\|_{L^\I_t H^1(I)}\|\z\pb\|_{L^\I_t H^1(I)},}
using that $\|\vec v\|_{L^4_t(W^{1,3}\cap L^\I)(I)} \lec \|\vec v\|_{\Stz^1(I)}$. 

Combining \eqref{lapb inc} and \eqref{gapb inc}, we deduce that there exist absolute constants $C_0\in(1,\I)$ and $\de_0\in(0,1)$ such that if $\de\le\de_0$ then for every $t_0\ge 0$
\EQ{ \label{vpb locbd}
 \CAS{\sup_{t_0\le t\le t_0+1}\|v\pb\|_\om \le C_0\|v\pb(t_0)\|_\om,
 \\ \sup_{t_0\le t\le t_0+1}\|v\pc\|_\om \le \|v\pc(t_0)\|_\om + C_0\de\|v\pb(t_0)\|_\om.} }

Suppose that for some $\ell>0$ and $t_0\ge 0$, we have 
\EQ{
 \|v\pc(t_0)\|_\om \le \ell|\lb_+\pb(t_0)|,}
and define $t_1\in(t_0,\I]$ by 
\EQ{
 t_1=\inf\{t>t_0 \mid \|v\pc(t)\|_\om > (\ell+C_0\de+C_0\de \ell)|\lb_+\pb(t)|\}.}
If $\de,\ell>0$ are chosen such that 
\EQ{ \label{cond1 deL}
  \de(\ell+C_0\de+C_0\de \ell) \ll \al,}
then for $t\in[t_0,t_1)$ we have, using $\al_\om\in(\frac{9}{10}\al,\frac{11}{10}\al)$,  
\EQ{
 \de\|v\pb\|_\om \ll \al_\om |\lb_+\pb|,}
and injecting this into \eqref{lapb inc}, 
\EQ{ \label{lapb grw}
 \p_t|\lb_+\pb|^2 \ge \al_\om|\lb_+\pb|^2.}
Hence $|\lb_+\pb(t)|$ is increasing on $[t_0,t_1)$. 
On the other hand, \eqref{vpb locbd} implies 
\EQ{ \label{vpc bd}
 t_0\le t\le t_0+1 \implies \|v\pc\|_\om \pt\le (1+C_0\de)\|v\pc(t_0)\|_\om+C_0\de|\lb_+\pb(t_0)|
 \pr\le (\ell+C_0\de+C_0\de \ell)|\lb_+\pb(t_0)|.}
Therefore by the definition of $t_1$, we deduce that 
\EQ{
 t_1 > t_0+1.}
In particular, we obtain from \eqref{lapb grw} and \eqref{vpc bd}, 
\EQ{ \label{vpc+1}
 \|v\pc(t_0+1)\|_\om \le (\ell+C_0\de+C_0\de \ell)e^{-\al_\om/2}|\lb_+\pb(t_0+1)|.}
Then imposing another condition on $(\de,\ell)$: 
\EQ{ \label{cond2 deL}
 (\ell+C_0\de+C_0\de \ell) \le \ell e^{\al/3}}
leads to 
\EQ{
 \|v\pc(t_0+1)\|_\om \le \ell|\lb_+\pb(t_0+1)|,}
so by induction we deduce that for all $n\in\N$, 
\EQ{
 \pt \|v\pc(t_0+n)\|_\om \le \ell|\lb_+\pb(t_0+n)|.}
Moreover, $t_1=\I$ and \eqref{lapb grw} is valid for all $t\ge t_0$. 
Thus we have obtained the following key lemma, choosing $C_1\gg C_0$. 

\begin{lem} \label{lem:Lip}
There is a constant $C_1\in(2C_0,\I)$ such that if $\de>0$ is small enough and $\om\ge \om_*$, 
$v\oj$ satisfies  
$\|v\oj\|_{\Stz^1\ul(0,\I)}\le\de$ for $j=0,1$, 
and $v\pb\in C([0,\I);H^1)$ satisfies the equation \eqref{eqpb} for $t\ge 0$, together with $\LR{iv\pb(0)|Q_\om'}=0$ and 
\EQ{
 \|P^\om_\cs v\pb(0)\|_\om \le \ell|P^\om_+ v\pb(0)|}
for some $\ell>0$ in the range 
\EQ{ \label{range L}
 \frac{C_1\de}{\al} \le \ell \le \frac{\al}{C_1\de},}
then for all $n\in\N$ and all $t\ge 0$ we have 
\EQ{
 \pt \|P^\om_\cs v\pb(n)\|_\om \le \ell|P^\om_+ v\pb(n)|,
 \pq \|P^\om_\cs v\pb(t)\|_\om \le \ell(1+C_1\de)|P^\om_+v\pb(t)|,}
and
\EQ{
 |P^\om_+v\pb(t)| \ge e^{\al_\om t/2}|P^\om_+v\pb(0)|.}
\end{lem}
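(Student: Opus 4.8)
The plan to prove Lemma \ref{lem:Lip} is to run a continuity (bootstrap) argument for an invariant cone $\|P^\om_\cs v\pb\|_\om\le\ell'|P^\om_+v\pb|$ with a slightly enlarged slope $\ell':=\ell+C_0\de+C_0\de\ell$, propagated along the nonnegative integers $n=0,1,2,\dots$. Every analytic ingredient is already in hand from the discussion of \eqref{eqpb} just above: the pointwise inequality \eqref{lapb inc} for $|\lb_\pm\pb|^2$, the unit-interval increment bound \eqref{gapb inc} for $\|\z\pb\|_\om^2$, the one-step control \eqref{vpb locbd}, the upgraded growth \eqref{lapb grw}, and the bounds \eqref{vpc bd}, \eqref{vpc+1}. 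What remains is to organise these into the induction and to record that the two smallness requirements \eqref{cond1 deL} and \eqref{cond2 deL} on $(\de,\ell)$ are simultaneously solvable exactly on the admissible range \eqref{range L}, provided $C_1\gg C_0$.

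First I would fix the decomposition: since $\p_t\LR{iv\pb|Q_\om'}=0$ along \eqref{eqpb} (differentiate the defining relation \eqref{eq m} of $m^\om$), the orthogonality $\LR{iv\pb(t)|Q_\om'}=0$ persists, so $v\pb=\lb_+\pb g^\om_++v\pc$ with $v\pc=\lb_-\pb g^\om_-+\z\pb$, $\z\pb\in\cZ_\om$, for all $t\ge0$, and $\|v\pb\|_\om^2=|\lb_+\pb|^2+\|v\pc\|_\om^2$ by \eqref{def normom}. Substituting into \eqref{eqpb}, using $(i\cL^\om\mp\al_\om)g^\om_\pm=0$, the coercivity \eqref{energy equiv} of $\cL^\om$ on $\cZ_\om$, and the hypothesis $\|v\oj\|_{\Stz^1\ul(0,\I)}\le\de$ to estimate $\deri m^\om(\vec v)$ and $\deri\cN^\om(\vec v)$, one recovers \eqref{lapb inc} pointwise and \eqref{gapb inc} as an increment over any unit interval; here the cubic terms cannot be differentiated in $t$, which is precisely why $\z\pb$ must be controlled only in increments over unit-length intervals through the inhomogeneous Strichartz estimate for $e^{-it\De}$ applied to \eqref{eqpb}. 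Combining \eqref{lapb inc} and \eqref{gapb inc} then gives \eqref{vpb locbd}.

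For the inductive step, assume $\|v\pc(t_0)\|_\om\le\ell|\lb_+\pb(t_0)|$ at an integer $t_0$ and let $t_1$ be the first exit time from the enlarged cone of slope $\ell'$. On $[t_0,t_1)$ the cone constraint together with $\al_\om\in(\frac{9}{10}\al,\frac{11}{10}\al)$ and \eqref{cond1 deL} forces $\de\|v\pb\|_\om\ll\al_\om|\lb_+\pb|$, so \eqref{lapb inc} upgrades to \eqref{lapb grw}; hence $|\lb_+\pb|$ is increasing there, while \eqref{vpb locbd} shows $\|v\pc\|_\om$ stays inside the enlarged cone for a full unit of time, so $t_1>t_0+1$. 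Evaluating at $t_0+1$, the gain $|\lb_+\pb(t_0+1)|\ge e^{\al_\om/2}|\lb_+\pb(t_0)|$ produces the factor $e^{-\al_\om/2}$ in \eqref{vpc+1}, and \eqref{cond2 deL} restores the original slope $\ell$ at $t_0+1$, closing the induction; letting $t_0$ run over all nonnegative integers yields $t_1=\I$, so \eqref{lapb grw} is valid for every $t\ge0$, giving $|P^\om_+v\pb(t)|\ge e^{\al_\om t/2}|P^\om_+v\pb(0)|$, and the slope bound $\|P^\om_\cs v\pb(t)\|_\om\le\ell(1+C_1\de)|P^\om_+v\pb(t)|$ for all (not only integer) $t$ follows from \eqref{vpb locbd} on each $[n,n+1]$ since $|\lb_+\pb|$ is nondecreasing. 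The main obstacle, beyond the bookkeeping of the two constraints \eqref{cond1 deL}--\eqref{cond2 deL}, is to keep $C_0,\de_0$ and then $C_1$ genuinely uniform in $\om\ge\om_*$; this relies on the $\om$-uniform equivalence of $\|\cdot\|_\om$ with the $H^1$ norm, the $\om$-uniform local wellposedness of \eqref{eqpb} in $\Stz^1\loc(0,\I)$ (from the Strichartz estimate), and the asymptotics $\al_\om=\al+O(\om^{-1/4})$, $g^\om_\pm=g_\pm+O(\om^{-1/4})$, $Q_\om'=Q'+O(\om^{-1/4})$ of Lemma \ref{lem:sum}.
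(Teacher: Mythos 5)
Your proposal is correct and follows essentially the same route as the paper: the preservation of the orthogonality, the increment bounds \eqref{lapb inc}, \eqref{gapb inc}, \eqref{vpb locbd}, and then the cone bootstrap with the enlarged slope $\ell+C_0\de+C_0\de\ell$, the exit-time argument giving $t_1>t_0+1$, the exponential gain \eqref{lapb grw} restoring the slope $\ell$ at integer times via \eqref{cond1 deL}--\eqref{cond2 deL}, and the observation that \eqref{range L} with $C_1\gg C_0$ guarantees both conditions uniformly in $\om\ge\om_*$.
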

Note that \eqref{range L} is a sufficient condition to have \eqref{cond1 deL} and \eqref{cond2 deL}, and the range of $\ell$ is non-empty for $0<\de\le\al/C_1$. 

\subsubsection{Uniqueness and regularity of $G_\om$}
Let $0<\de\ll 1$ and $\ell>0$ satisfy \eqref{range L}. Let $v\zr,v\on$ be two solutions of \eqref{eq v} satisfying $v\oj(0)\in\V_\om$ and $\|\vec v\|_{\Stz^1\ul(0,\I)}\le\de$. 
Then $\diff v\pa=v\on-v\zr$ satisfies the equation \eqref{eqpb}. 
Suppose that at some $t_0\ge 0$ we have 
\EQ{
 \|\diff v\pa_\cs(t_0)\|_\om \le \ell|\diff\lb\pa_+(t_0)|.}
Then the above lemma implies that $\diff\lb\pa_+$ is exponentially growing for $t\ge t_0$, which contradicts $\vec v\in L^\I_tH^1(0,\I)$, unless $\diff v\pa(t_0)=0$. Hence for all $t\ge 0$, we have 
\EQ{
 |\diff\lb\pa_+(t)| \le \ell^{-1}\|\diff v\pa_\cs(t)\|_\om,}
where the Lipschitz constant can be optimized by taking the largest possible $\ell=O(\de^{-1})$ in the lemma. 
Then going back to \eqref{vpb locbd}, we also obtain 
\EQ{
 \|\diff v\pa_\cs(t)\|_\om \le  e^{2C_0\de(t+1)}\|\diff v\pa_\cs(0)\|_\om}
for all $t\ge 0$. 
Thus we obtain (using $C_1\ge 2C_0$) 
\begin{lem}
Let $0<\de<\al/C_1$ be small enough and $\om\ge \om_*$. 
Let $v\zr,v\on$ be two solutions of \eqref{eq v} on $t\in[0,\I)$ satisfying the orthogonality $v\oj(0)\in\V_\om$ and $\|v\oj\|_{L^\I_t H^1(0,\I)}\le \de$. 
Then we have, for all $t\ge 0$, 
\EQ{
 \al |\diff P^\om_+v\pa(t)| \le C_1\de\|\diff P^\om_\cs v\pa(t)\|_\om,}
and for all $t\ge 0$, 
\EQ{
 \|\diff P^\om_\cs v\pa(t)\|_\om \le e^{C_1\de(t+1)}\|\diff P^\om_\cs v\pa(0)\|_\om.}
\end{lem}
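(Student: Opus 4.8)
The plan is to combine Lemma \ref{lem:Lip} with a contradiction argument that excludes the unstable mode, and then to run a Gronwall-type iteration for the center--stable part; the delicate point --- forward invariance of the unstable cone together with exponential growth of the unstable coordinate inside it --- is already packaged in Lemma \ref{lem:Lip}. For the setup: since $\V_\om$ is a linear subspace and $v\oj(0)\in\V_\om$, the difference $\diff v\pa:=v\on-v\zr$ satisfies $\LR{i\diff v\pa(0)|Q_\om'}=0$, and by the discussion following \eqref{eqpb} (differentiate \eqref{eq m}) this orthogonality persists for all $t\ge0$; moreover $\diff v\pa$ solves the linear equation \eqref{eqpb} with $\vec v=(v\zr,v\on)$. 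The hypothesis $\|v\oj\|_{L^\I_t H^1(0,\I)}\le\de$ upgrades, by the local Strichartz bound recorded just before \eqref{eqpb}, to $\|v\oj\|_{\Stz^1\ul(0,\I)}\lec\de$, so that after shrinking $\de$ the pair $(v\zr,v\on)$ meets the standing smallness hypothesis of Lemma \ref{lem:Lip}. I would decompose $\diff v\pa=(\diff P^\om_+v\pa)\,g^\om_++\diff P^\om_\cs v\pa$ and use that, for functions in $\V_\om$, the norm $\|\cdot\|_\om$ splits orthogonally as $\|\diff v\pa\|_\om^2=(\diff P^\om_+v\pa)^2+\|\diff P^\om_\cs v\pa\|_\om^2$.

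For the first estimate I would argue by contradiction in a forward unstable cone, taking $\ell$ at the top of the admissible range \eqref{range L}, so that $\ell^{-1}\sim\de$. Suppose $\|\diff P^\om_\cs v\pa(t_0)\|_\om\le\ell|\diff P^\om_+v\pa(t_0)|$ for some $t_0\ge0$. If $\diff P^\om_+v\pa(t_0)=0$ this forces $\diff v\pa(t_0)=0$, hence $\diff v\pa\equiv0$ by (forward and backward) uniqueness for the linear evolution \eqref{eqpb}, and the asserted bounds hold trivially. Otherwise, translating everything so that $t_0$ becomes the time origin (this only shifts $\vec v$ and the unknown in \eqref{eqpb}, whose structure is preserved) and applying Lemma \ref{lem:Lip} gives $|\diff P^\om_+v\pa(t_0+s)|\ge e^{\al_\om s/2}|\diff P^\om_+v\pa(t_0)|$ for all $s\ge0$, which is unbounded; but $|\diff P^\om_+v\pa|\lec\|\diff v\pa\|_{H^1}\le2\de$, a contradiction. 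Hence the cone condition holds at no $t_0\ge0$, i.e.\ $\ell|\diff P^\om_+v\pa(t)|<\|\diff P^\om_\cs v\pa(t)\|_\om$ for all $t\ge0$, which rearranges to $\al|\diff P^\om_+v\pa(t)|\le C_1\de\|\diff P^\om_\cs v\pa(t)\|_\om$ once $C_1$ is taken large enough.

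For the second estimate I would feed the first one back into \eqref{vpb locbd}. The first estimate and the orthogonal splitting give $\|\diff v\pa(t)\|_\om\le(1+C_1\de/\al)\|\diff P^\om_\cs v\pa(t)\|_\om\lec\|\diff P^\om_\cs v\pa(t)\|_\om$, so the second line of \eqref{vpb locbd}, applied to the unknown $\diff v\pa$ on each unit interval $[t_0,t_0+1]$, yields $\sup_{t_0\le t\le t_0+1}\|\diff P^\om_\cs v\pa(t)\|_\om\le(1+C\de)\|\diff P^\om_\cs v\pa(t_0)\|_\om$ with an absolute constant $C$. Iterating over $t_0=0,1,2,\dots$ and absorbing the fractional part of $t$ into one extra factor $1+C\de\le e^{C\de}$ produces $\|\diff P^\om_\cs v\pa(t)\|_\om\le e^{C\de(t+1)}\|\diff P^\om_\cs v\pa(0)\|_\om$; enlarging $C$ to $C_1$ (legitimate for $\de$ small, since $C_1\ge2C_0\ge C$) gives the stated bound.

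Beyond invoking Lemma \ref{lem:Lip}, the only real obstacle is the bookkeeping of constants: choosing $\ell$ at the extreme end of the admissible range \eqref{range L} so that the Lipschitz ratio $\ell^{-1}$ is comparable to $C_1\de/\al$, isolating the degenerate null case $\diff v\pa(t_0)=0$ so that the contradiction step is clean, and checking that the per-unit-interval growth factors chain to the claimed $e^{C_1\de(t+1)}$ once fractional times are absorbed.
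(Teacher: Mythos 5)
Your proposal is correct and follows essentially the same route as the paper: you exclude the unstable cone at every time via Lemma \ref{lem:Lip} (exponential growth of $\diff P^\om_+v\pa$ contradicting the uniform $H^1$ bound, with the degenerate case $\diff v\pa(t_0)=0$ handled trivially), which yields the first estimate with $\ell=O(\de^{-1})$, and then you feed it back into \eqref{vpb locbd} and iterate over unit intervals to get the $e^{C_1\de(t+1)}$ bound, exactly as in the paper's argument.
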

The above lemma implies the uniqueness of $G_\om(\lb_-,\z)$ for each small $(\lb_-,\z)$, as well as the Lipschitz continuity. 
To show the G\^ateaux differentiability, fix arbitrary $\fy,\psi\in \cZ_\om$ and $a,b\in\R$ such that $\|\fy\|_\om+|a|\ll 1$, and let $v\zr,v\on$ be two solutions of \eqref{eq v} satisfying $\|v\oj\|_{L^\I_tH^1(0,\I)}\le\de$ and 
\EQ{
 P^\om_\cs v\zr(0)=a g^\om_- + \fy, \pq P^\om_\cs v\on(0)=(a+hb)g^\om_- + (\fy+h\psi)}
with a small parameter $\R\ni h\to 0$. 

Then $w:=\diff v\pa/h$ solves the equation \eqref{eqpb} with the initial condition $P^\om_\cs w(0)=bg^\om_-+\psi$  independent of $h$, and the above lemma implies that for all $t\ge 0$ 
\EQ{ \label{bd La'}
 \al|P^\om_+w(t)| \le C_1\de\|P^\om_\cs w(t)\|_\om,
 \pq \|P^\om_\cs w(t)\|_\om \le e^{C_1\de(t+1)}\|P^\om_\cs w(0)\|_\om.}
Using the local wellposedness of \eqref{eqpb} as well, we deduce that $w$ is bounded in $\Stz^1(0,T)$ as $h\to 0$ for any $T<\I$. 
The uniform bound together with the equation implies that there is a sequence of $h\to 0$ along which $w$ converges to some $w_\I\in\Stz^1\loc(0,\I)$ in $C([0,\I);\weak{H^1})$. The limit $w_\I$ solves the equation \eqref{eqpb} with $\vec v=(v,v)$, satisfying 
\EQ{ \label{bd wI}
 \pt \al|P^\om_+w_\I(t)| \le C_1\de\|P^\om_\cs w_\I(t)\|_\om,
 \pq \|P^\om_\cs w_\I(t)\|_\om \le e^{C_1\de(t+1)}\|P^\om_\cs w_\I(0)\|_\om, }
where the last normand is the prescribed $bg^\om_-+\psi$. 
If there is another limit $w_\I'$ along another sequence of $h\to 0$, then $w_\I-w_\I'$ satisfies the same equation \eqref{eqpb} and the same estimates with $P^\om_\cs(w_\I-w_\I')(0)=0$, therefore $w_\I\equiv w_\I'$. 
Hence the limit is unique, and so the convergence holds for the entire limit $h\to 0$. Thus we obtain the G\^ateaux derivative of $G_\om$ at $(a,\fy)$ in the direction $(b,\psi)$
\EQ{
 G_\om'(a,\fy)(b,\psi)=P^\om_+w_\I(0)\in\R,}
which is bounded linear on $(b,\psi)\in\R\times\cZ_\om$, because it is determined by the linear equation \eqref{eqpb} with the boundedness by \eqref{bd wI}:
\EQ{
 \|G_\om'(a,\fy)\|_{\B(\R\times\cZ_\om,\R)} \lec C_1\al^{-1}\|(a,\fy)\|_{\R\times H^1}.} 

To show that the G\^ateaux derivative is continuous with respect to $(a,\fy)$ in the operator norm, take any sequence $(a_n,\fy_n)\in\R\times \cZ_\om$ strongly convergent to $(a,\fy)$, and any sequence $(b_n,\psi_n)\in\R\times \cZ_\om$ weakly convergent to $(b,\psi)$. 
Let $v_n$ be the solution of \eqref{eq v}, and $w_n$ be the solution of \eqref{eqpb} with $\vec v=(v_n,v_n)$, satisfying 
\EQ{
 \pt v_n(0)=G_\om(a_n,\fy_n)g^\om_+ + a_ng^\om_- + \fy_n,
 \pr w_n(0)=G_\om'(a_n,\fy_n)(b_n,\psi_n)g^\om_+ + b_n g^\om_- + \psi_n.}
By the local wellposedness for \eqref{eq v}, we have $v_n\to v_\I$ in $\Stz^1\loc(0,\I)$, where $v_\I$ is the solution of \eqref{eq v} with 
\EQ{
 v_\I(0)=G_\om(a,\fy)g^\om_+ + ag^\om_- + \fy.}
Also we have 
\EQ{
 \|w_n(t)\|_\om \lec e^{C_1\de t}[|b_n|+\|\psi_n\|_{H^1}],}
which is uniformly bounded on any finite interval. 
These uniform bounds together with the equation for $w_n$ imply that $w_n$ converges to some $w\in\Stz^1\loc(0,\I)$ in $C([0,\I);\weak{H^1})$, at least along a subsequence. Then the limit $w_\I$ solves \eqref{eqpb} with $\vec v=(v_\I,v_\I)$ and $P^\om_\cs w_\I(0)=b g^\om_-+\psi$, satisfying the orthogonality and 
\EQ{
 \max\BR{\frac{\al}{C_1\de}|P^\om_+w_\I(t)|, \|P^\om_\cs w_\I(t)\|_\om} \le e^{C_1\de(t+1)}\|P^\om_\cs w_\I(0)\|_\om.}
The uniqueness of such a solution implies 
\EQ{
 P^\om_+ w_\I(0)=G_\om'(a,\fy)(b,\psi)}
as well as the convergence of $w_n$ along the full sequence $n\to\I$. Hence 
\EQ{
 \lim_{n\to\I}G_\om'(a_n,\fy_n)(b_n,\psi_n)=G_\om'(a,\fy)(b,\psi).}
Since this holds for any weakly convergent $(b_n,\psi_n)$, we have the convergence of $G_\om'(a_n,\fy_n)\to G_\om'(a,\fy)$ in the operator norm. In short, $G_\om$ is a $C^1$ function on a small neighborhood of $0$ in $\R\times \cZ_\om$. 

\subsection{Nine sets of solutions around the excited states}
We have obtained a manifold for each $\om\ge \om_*$ in the local coordinate $\sC_\om$ of Lemma \ref{lem:sum}, that is
\EQ{ \label{def Mom0}
 \M^\om_0 \pt:= \{  \sM_\om(\te,b_-,\z) \mid (\lb_-,\z)\in U_\om,\ \te\in\R/2\pi\Z \}, 
 \prQ \sM_\om(\te,b_-,\z):=\sC_\om(\te,G_\om(\lb_-,\z),\lb_-,\z), }
consisting of trapped solutions, 
in the open neighborhood $\sC_\om(\cU_\om)$ of $\cQ_\om$. 
$\M^\om_0$ has codimension $1$, separating the complement into two open sets 
\EQ{
 \M^\om_\pm := \sC_\om(\{ (\te,\lb_+,\lb_-,\z)\in\cU_\om \mid \pm(\lb_+-G_\om(\lb_-,\z))>0 \}). }
The solutions in $\M^\om_\pm$ are ejected with $b_1(t_X)\sim\pm\de_X$ at some ejection time $t_X>0$ (which depends on the solution).

The time inversion of $\M^\om_0$ is the complex conjugate 
\EQ{
 \ba{\M^\om_0}=\{\sC_\om(\te,\lb_+,G_\om(\lb_+,\ba{\z}),\z) \mid (\lb_+,\z)\in U_\om, \te\in\R/2\pi\Z \}, } 
which is a $C^1$ manifold of codimension $1$ consisting of solutions trapped by $\cQ_\om$ for $t\le 0$. 
Since $\ba{g^\om_\pm}=g^\om_\mp$ and $|G_\om'|\ll 1$, $\M^\om_0$ and $\ba{\M^\om_0}$ intersect transversely: the implicit function theorem yields a unique $C^1$ function $\vec G_\om:\{\z\in\cZ_\om\mid\|\z\|_\om<\de_-\}\to\{(b_+,b_-)\in\R^2\mid \max |b_\pm|<\de_-\}$ such that 
\EQ{ 
 (\lb_+,\lb_-)=\vec G_\om(\z) \iff \lb_+=G_\om(\lb_-,\z) \tand \lb_-=G_\om(\lb_+,\ba\z),}
and $\cU_\om\setminus(\M^\om_0\cap\ba{\M^\om_0})$ consists of four open sets with distinct local behavior in $t>0$ and in $t<0$, according to $\sign(\lb_+ - G_\om(\lb_-,\z))$ and $\sign(\lb_- - G_\om(\lb_+,\ba{\z}))$. 
Thus all solutions of \eqref{rscNLS} starting near $\cQ_\om$ are classified into 9 non-empty sets of solutions: 
\EQ{
 \{\M^\om_j\cap\ba{\M^\om_k}\}_{j,k\in\{0,\pm\}}. }
All these about the manifolds rely only on the instability, and they are independent of the global dynamics investigated in the previous sections. 

Under the constraints \eqref{energy const} and $\om\ge\om_\star$, the scattering/blow-up away from the excited states also applies to them, leading to the characterization by global behavior: 
\EQ{
 \M^\om_+\cap\cH^\om_*\subset \rS_\om\cS,\pq \M^\om_-\cap\cH^\om_*\subset \rS_\om\B, 
 \pq \M^\om_0\cap\cH^\om_* \subset \rS_\om\T_{C\de_*},} 
where $\cH^\om_*$ denotes the constrained region for $\om\ge\om_\star$
\EQ{
 \cH^\om_*:=\{\fy\in H^1_r(\R^3)\mid \bA^\om(\fy)<\bA^\om(Q_\om)+c_*\de_*^2\}.}

Thus we have proven the existence of infinitely many orbits of the 9 cases in the main Theorem \ref{thm:main}. 
It remains to see that the manifold extends to the entire set of trapped solutions, together with the threshold property. 

\subsection{Extension of the manifold}
Let $\de_M\in(0,\de_X)$ so small that $\de_M\ll c_X$ and for any $\om\ge\om_*$ and any $(\te,b_+,b_-,\z)\in\sU_\om$, 
\EQ{
 d_\om(\sC_\om(\te,b_+,b_-,\z))<\de_M \implies |b_+|<\de_+ \tand \max(|b_-|,\|\z\|_\om)<\de_-.}
Then the trapping lemma \ref{lem:stay} implies that 
\EQ{ \label{mfd restr}
 \M^\om_1:=\{\fy\in \M^\om_0 \mid d_\om(\fy)<\de_M,\ \bA^\om(\fy)<\bA^\om(Q_\om)+c_X\de_M^2\}}
is forward invariant by the flow of \eqref{rscNLS}. 
Let $\M^\om_2$ be the maximal backward extension by the flow of this set. 
Then $\M^\om_2$ is the union of all orbits of forward global solutions $u_\om$ of \eqref{rscNLS} satisfying  
\EQ{ \label{trap for mfd}
 \bA^\om(u_\om)<\bA^\om(Q_\om)+c_X\de_M^2,
 \pq \limsup_{t\to\I}d_\om(u_\om(t))<\de_X.}
Indeed, the trapping lemma automatically improves the last bound to 
\EQ{ \label{improved dist bd}
 \limsup_{t\to\I}d_\om(u_\om(t))^2 \le c_X^{-1}(\bA^\om(u_\om)-\bA^\om(Q_\om))<\de_M^2 \ll\de_X^2,}
so $u_\om(t)$ belongs to \eqref{mfd restr} for large $t$. 
Hence $\M^\om_2$ is a $C^1$ manifold with codimension $1$ and invariant by the flow of \eqref{rscNLS}. $\M^\om_2$ is unbounded, since it contains solutions that blow up in $t<0$. 

To see that $\M^\om_2$ is connected, let $u\zr$ and $u\on$ be two solutions of \eqref{rscNLS} on $\M^\om_2$. 
By the above argument, there exists $T>0$ such that both $u\zr$ and $u\on$ are in $\M^\om_1$ for all $t\ge T$. 
Let $u\zr(T)=\sM_\om(\te\zr,b_-\zr,\z\zr)$, then $\max(|b_-\zr|,\|\z\zr\|_\om)<\de_-$. 
For each $\y\in[0,1]$, let $u\zr_\y$ be the solution of \eqref{rscNLS} with the initial condition 
\EQ{
 u\zr_\y(T)=\sM_\om(\te\zr,b_-\zr,\y\z\zr),}
then $u\zr_1=u\zr$ and $u\zr_\y(T)\in\M^\om_0$. 
Moreover, $\bA^\om(u\zr_\y)$ is decreasing as $\y<1$ decreases until $\|\y \z\zr\|_\om\lec |b_-\zr|^2$, because 
\EQ{ \label{ene deg mfd}
 \y\frac{d}{d\y}\bA^\om(u\zr_\y(T))
 \pt= -2b_-\zr b_+'+\LR{\cL^\om\y\z\zr|\y\z\zr}-\LR{N^\om(v_\y)|b_+'g^\om_++\y\z\zr}
 \pr=\LR{\cL^\om\y\z\zr|\y\z\zr} + O((|b_-\zr|+\|\y\z\zr\|_\om)^3),}
where $b_+':=\p_\z G_\om(b_-\zr,\y\z\zr)\y\z\zr$ and $v_\y:=G_\om(b_-\zr,\y\z\zr)g^\om_++b_-\zr g^\om_-+\z\zr$. 
Hence there exists $\y_0\in(0,1)$ such that $\bA^\om(u\zr_\y)$ is increasing for $\y\in[\y_0,1]$ and $\|\y_0\z\zr\|_\om\lec|b_-\zr|^2\lec\de_M^2$. 
Since the energy constraint is preserved, those solutions $u\zr_\y$ are also on $\M^\om_2$ for $\y\in[\y_0,1]$. 
In the same way, we obtain a continuous family of solutions $u\on_\y$ in $\M^\om_2$ for $\y\in[\y_1,1]$ with some $\y_1\in(0,1)$ such that 
\EQ{
 u\on_\y(T)=\sM_\om(\te\on,b_-\on,\y\z\on),
 \pq \|\y_1\z\on\|_\om\lec\de_M^2.}
Let $\fy_s$ be the linear interpolation on $\M^\om_0$ between $u\zr_{\y_0}(T)$ and $u\on_{\y_1}(T)$, namely 
\EQ{
 \fy_s:=\sM_\om((1-s)\te\zr+s\te\on,(1-s)b_-\zr+sb_-\on,(1-s)\y_0\z\zr+s\y_1\z\on)}
for $s\in[0,1]$. Then the same estimate as in \eqref{ene deg mfd} yields
\EQ{
 \bA^\om(\fy_s)-\bA^\om(Q_\om) \lec \de_M^3 \ll c_X\de_M^2}
and so $\fy_s\in\M^\om_2$. Thus we have obtained a path connecting $u\zr(0)$ and $u\on(0)$ in $\M^\om_2$: 
\EQ{
 \{u\zr(t) \mid t:0\nearrow T\} \pt\cup \{u\zr_\y(T)\mid \y:1\searrow \y_0\}
 \cup\{\fy_s \mid s:0\nearrow 1\} 
 \pr\cup \{u\on_\y(T) \mid \y:\y_1\nearrow 1\} \cup\{u\on(t) \mid t:T\searrow 0\}.}

The trapping characterization \eqref{trap for mfd} of $\M^\om_2$, together with the distance gap \eqref{improved dist bd} from the ejected solutions, implies that for any solution $u_\om$ on $\M^\om_2$, the rescaled solution $u_\be(t):=\rS_{\be/\om} u_\om(\om t/\be)$ is also on $\M^\be_2$ if $\be/\om$ is close enough to $1$. 
Hence rescaling and unifying over $\om$ yields a $C^1$ manifold of codimension $1$:
\EQ{
 \M_3:=\Cu_{\om>\om_*}\rS_\om^{-1}\M^\om_2,} 
around $\Soli_1|_{\bM<\mu_*}$ in $H^1_r(\R^3)$. 
Since $\M^\om_2$ is invariant by the rescaled NLS, the above manifold $\M_3$ is invariant by \eqref{NLSP} in the original scaling. 
$\M_3$ is also connected\footnote{Let $X,Y$ be topological spaces and $M:X\to\cP(Y)$. Suppose that $X$ is connected, and that for every $x\in X$, $M(x)$ is connected and $M(x)\cap M(z)\not=\emptyset$ for all $z$ in a neighborhood of $x$. Then $\Cu_{x\in X}M(x)$ is also connected.} and unbounded, and it is the union of all orbits of forward global solutions $u$ of \eqref{NLSP} such that $u_\om:=\rS_\om u(t/\om)$ satisfies \eqref{trap for mfd} for some $\om>\om_*$. 

Restricting $\om\ge\om_\star$ and $\de_M\le(c_*/c_X)^{1/2}\de_*$, 
the scattering/blow-up after departure is applicable to the solutions off the manifold. Hence putting 
\EQ{
 \pt \cH_\star:=\{\fy\in H^1_r(\R^3)\mid \exists\om>\om_\star,\ \bA^\om(\rS_\om\fy)<\bA^\om(Q_\om)+c_X\de_M^2\},
 \pr \M_\star:=\Cu_{\om>\om_\star}\rS_\om^{-1}\M^\om_2}
we have 
\EQ{
  \M_\star = \cH_\star\cap \T_{C\de_M}, \pq \cH_\star\setminus\M_\star = \cH_\star\cap(\cS\cup\B).} 
Moreover, around each point $\fy\in\M_\star$, we can find a small open ball $B(\fy)\subset\cH_\star$ which is separated by $\M_\star$ into $\cS$ and $\B$. 
More precisely, $B(\fy)\setminus\M_\star$ is open with two connected components $B^\pm(\fy)$ such that $B^+(\fy)\subset\cS$ and $B^-(\fy)\subset\B$. 
Then $B_\star:=\Cu_{\fy\in\M_\star}B(\fy)$ is an open neighborhood of $\M_\star$ in $\cH_\star$, separated by $\M_\star$ into two disjoint open sets: $B^\pm_\star:=\Cu_{\fy\in\M_\star}B^\pm(\fy)$.  $B_\star$ and $B^\pm_\star$ are also connected sets, because $\M_\star$ is\addtocounter{footnote}{-1}\footnotemark.

The time inversion $\ba{\M_\star}$ is a $C^1$ manifold with codimension $1$, consisting of solutions in $\cH_\star$ trapped by $\Psi$ as $t\to-\I$. 
Hence $\M_\star \cap \ba{\M_\star}$ consists of solutions trapped by $\Psi$ as $t\to+\I$ and as $t\to-\I$. 
The one-pass lemma \ref{lem:one pass} implies that such a solution under the constraint stays within $O(\de_M)$ distance in $H^1_\om$ around $\Psi[\om]$ of some $\om>\om_\star$ for all $t\in\R$. 
Hence, taking $\de_M\ll\de_-$, we have  
\EQ{
 \M_\star\cap\ba{\M_\star} = \Cu_{\om>\om_\star}\{\rS_\om^{-1}\fy \mid \fy\in\M^\om_0\cap\ba{\M^\om_0},\ \bA^\om(\fy)<\bA^\om(Q_\om)+c_X\de_M^2\}.}
Actually, for any $\fy\in\M_\star\cap\ba{\M_\star}$ and any $\om>\om_\star$ such that $\bA^\om(\rS_\om\fy)<\bA^\om(Q_\om)+c_X\de_M^2$, we have $\rS_\om\fy\in\M^\om_0\cap\ba{\M^\om_0}$. 
Hence $\M_\star \cap \ba{\M_\star}$ is a $C^1$ invariant manifold with codimension $2$. The connectedness of $\M_\star\cap\ba{\M_\star}$ as well as $\M_2^\om\cap\ba{\M_2^\om}$ is proved in the same way as $\M_2^\om$, namely by reducing the dispersive component $\z$ in the local coordinate on $\M_0^\om\cap\ba{\M_0^\om}$. 
The same is for the connectedness of $\cS\cap\ba{\M_\star}$ and that of $\B\cap\ba{\M_\star}$, after rescaling and applying the backward flow in order to send them into the domain of the local coordinate around $Q_\om$. 

\appendix
\section{Table of Notation}
{\small
\begin{longtable}{l|l|l}
 \hline 
 symbols & description & defined in \\
 \hline
 $H=-\De+V$ & Schr\"odinger operator with the potential & \eqref{NLSP}, Section \ref{ss:asm V} \\ 
 $e_0$, $\phi_0$ & its eigenvalue and ground state &  \eqref{def e0phi0} \\  
 $Q$ & the ground state for NLS & \eqref{eq Q} \\
 $\ml{\cdot}$, $\bG$, $\bE,\bM,\bK_2$ & some functionals & \eqref{def ME}, \eqref{def funct}, \eqref{def K2}, \eqref{def K2'}\\
 $\bH^0$, $\bE^0$, $\bA$ & functionals without the potential & \eqref{def funct}, \eqref{def A} \\
 $\bA_\om$, $\bK_{0,\om}$ & functionals with frequency $\om$ & \eqref{def funcom} \\ 
 $\bE^\om$, $\bA^\om$, $\bK^\om_2$, $\bJ^\om$ & rescaled functionals & \eqref{def funcom} \\ 
 $\Soli$, $\Soli_j$, $\sE_j$ & all solitons, $j$-th solitons and energy & \eqref{def Soli}, \eqref{def Solij}, \eqref{def E0}, \eqref{def Ej} \\
 $(\Phi,\Om)$, $\Psi$ & the ground and first excited states & \eqref{def PhiPsi} \\
 $\mu_*$, $z_*,Z_*$, $\om_*$ & size of the above coordinates & Lemma \ref{lem:sum}\\
 $H^1_\om$, $\|\cdot\|_\om$ & rescaled energy norms & \eqref{def H1om}, \eqref{def normom} \\ 
 $\cN_\de(\Psi)$, $\cN_\om$ & neighborhoods of $\Psi$ (unscaled/rescaled) & \eqref{def Nde}, \eqref{def sNom} \\ 
 $\cS$, $\B$, $\T_\de$ & classification of initial data & \eqref{def SBT} \\ 
 $L^p$, $H^s_p$, $H^s$, $\dot H^s$, $B^s_{p,q}$ & Lebesgue, Sobolev and Besov spaces & Section \ref{ss:nota} \\
 $H^s_r$, $X_r$, $L^p_tX(I)$ & radial subspaces and  $X$-valued $L^p$ in $t$ & Section \ref{ss:nota} \\
 $(\cdot|\cdot)$, $\LR{\cdot|\cdot}$ & inner products on $L^2(\R^3)$ & Section \ref{ss:nota} \\
 $\Stz^s$, $\ST$ & Strichartz norms & Section \ref{ss:nota}\\
 $\cS^t_p$, $\cS'_p$, $\rS_\om$, $V^\om$ & scaling operators and scaled potential & \eqref{def Spt}, \eqref{def rSom} \\ 
 $\fy^\perp$, $P_\fy^\perp$, $P_c$ & orthogonal subspace and projection & \eqref{def perp}, \eqref{def Pc} \\ 
 $\diff l(a\pa):= l(a\on) - l(a\zr)$ & difference & Section \ref{ss:nota}\\
 $\CD$, $\CM,\CS$, $\CK$, $\om_\star$ & large constants & Lemmas \ref{lem:dich}, \ref{lem:minid}, \ref{lem:eject}, \ref{lem:smallM}\\ 
 $C_0$, $C_1$ & large constants & \eqref{vpb locbd}, Lemma \ref{lem:Lip} \\ 
 $\de_C,\de_D$, $\de_E$, $c_X,\de_I$, $\de_X$ & small constants & Lemmas \ref{lem:sum}, \ref{lem:LWP}, \ref{lem:inst}, \ref{lem:eject},\\ 
 $\de_U$, $\de_V,\e_S$, $\de_*$, $c_*$ & small constants & Lemmas  \ref{lem:var}, \ref{lem:sign}, \ref{lem:one pass}, \ref{lem:scatafter} \\
 $\e_V,\ka_V$ & small numbers in variational estimates & Lemma \ref{lem:var} \\
 $\cL,\cL^\om$, $L_\pm,L^\om_\pm$ & linearized operators & \eqref{def L}, \eqref{def Lom}\\
 $Q_\om$, $\cQ_\om$ & rescaled first excited state and its orbit & \eqref{def Psi Qom}, \eqref{def Qom orbit} \\ 
 $Q_\om'$, $Q'$ & frequency derivatives of solitons & \eqref{def Qom'}, \eqref{def Q'} \\ 
 $\al,\al_\om$, $g_\pm,g^\om_\pm$ & (un)stable eigenvalues/eigenfunctions & \eqref{eq g}, \eqref{def gom} \\
 $C^\om(v)$, $N^\om(v)$ & nonlinear part of energy and derivative & \eqref{def Com}, \eqref{def Nom} \\
 $P^\om_*$, $\sB^\om_*$, $\sZ^\om$ & symplectic projections around $Q^\om$ & \eqref{def Pom}, \eqref{def Pomc}, \eqref{def sBsZ} \\ 
 $d_{0,\om}$, $d_{1,\om}$, $d_\om$ & energy-distances to $\cQ_\om$ & \eqref{def d0om}, \eqref{def d1om}, \eqref{def dom} \\ 
 $\V_\om$, $\cZ_\om$ & $H^1$ subspaces with orthogonality & \eqref{def Vom}, \eqref{def Zom} \\
 $\sC_\om$, $\sU_\om$ & local chart around $\cQ_\om$ & \eqref{def sCom}, \eqref{def sUom} \\ 
 $m^\om(v)$ & modulation of phase & \eqref{eq m} \\ 
 $\cN^\om(v)$, $\cN^\om_*(v)$ & nonlinearity and its spectral projection & \eqref{eq v}, \eqref{def cNom*} \\
 $\chi$ & smooth cut-off function & \eqref{def chi} \\ 
 $\Sg_\om$, $\Sg$ & sign functionals & Lemma \ref{lem:sign} \\ 
 $I_H,I_V$ & sets of hyperbolic and variational times & \eqref{def IHV} \\ 
 $\sV_m$ & localized virial (depending on $\Sg_\om$) & \eqref{def virb}, \eqref{vir-s} \\ 
 $\cH_c[z]$, $R[z]$ & subspace and projection around $\Soli_0$ & \eqref{def HcRz} \\
 $B[z]$ & linear interaction with $\Soli_0$ & \eqref{def Bz} \\
 $\FS_\om(A)$ & set of global solutions away from $\cQ_\om$ & \eqref{scat after region} \\
 $\STN^\om$, $A^*_\om$, $A^*$ & Strichartz/energy bounds for scattering & \eqref{def A*} \\ 
 $\la^j_n$, $s^j_n$, $\ga^J_n$ & linear profile, its center and remainder & \eqref{linear prof decop}, \eqref{def laj}\\
 $\x^j_\I,\La^j_n$, $\Ga^J_n$ & nonlinear profiles and remainder & \eqref{def La}, \eqref{def LaGa}\\
 $G_\om$, $U_\om,\cU_\om$ & the graph of manifold and its domains & Theorem \ref{thm:mfd} \\ 
 $\deri{X}(\vec v)$ & operator for the difference & \eqref{def deri} \\
 $\M^\om_0$, $\sM_\om$ & local manifold and its coordinate & \eqref{def Mom0} \\ 
 \hline
\end{longtable}
}

\end{document}